\documentclass[reqno, oneside]{amsart}
\usepackage{amsfonts, amsmath, amsthm, amssymb}
\usepackage[shortlabels]{enumitem}
\usepackage{geometry}
\usepackage[utf8]{inputenc}
\usepackage{nicefrac}
\usepackage{tikz-cd}

\usepackage{adjustbox}
\usepackage{xspace}
\usepackage{etoolbox}
\usepackage{booktabs}
\usepackage[hidelinks]{hyperref}
\usepackage[capitalise]{cleveref}
\usepackage[colorinlistoftodos]{todonotes}

\newtheorem{theorem}{Theorem}[section]
\newtheorem{lemma}[theorem]{Lemma}
\newtheorem{proposition}[theorem]{Proposition}
\newtheorem{corollary}[theorem]{Corollary}

\theoremstyle{definition}
\newtheorem{definition}[theorem]{Definition}
\newtheorem{remark}[theorem]{Remark}

\newtheorem*{claim}{Claim}
\newtheorem{example}[theorem]{Example}

\crefformat{footnote}{#2\footnotemark[#1]#3}

\newcommand{\ReDeclareMathOperator}[2]{\let#1\relax\DeclareMathOperator{#1}{#2}}

\DeclareMathOperator{\at}{\textit{at}}
\DeclareMathOperator{\pt}{\textit{pt}}

\DeclareMathOperator{\RO}{RO}
\DeclareMathOperator{\GO}{AO}
\DeclareMathOperator{\RC}{RC}
\ReDeclareMathOperator{\int}{int}
\ReDeclareMathOperator{\O}{O}
\renewcommand{\O}{\texorpdfstring{\operatorname{O}}{O}}
\ReDeclareMathOperator{\C}{C}
\ReDeclareMathOperator{\K}{K}
\ReDeclareMathOperator{\S}{Sat}
\ReDeclareMathOperator{\CS}{CSat}
\ReDeclareMathOperator{\KS}{KS}
\ReDeclareMathOperator{\LC}{LC}
\ReDeclareMathOperator{\LO}{CLC}
\ReDeclareMathOperator{\CSAT}{CSAT}
\ReDeclareMathOperator{\SFilt}{SFilt}
\ReDeclareMathOperator{\WLC}{WLC}
\ReDeclareMathOperator{\WLO}{WCLC}
\ReDeclareMathOperator{\GC}{AC}
\ReDeclareMathOperator{\P}{\mathcal{P}}

\newcommand{\categoryname}[1]{\ensuremath{\mathbf{#1}}\xspace}
\newcommand{\DeclareCategory}[2]{\newcommand{#1}{\categoryname{#2}}}
\DeclareCategory{\SMT}{SMT}
\DeclareCategory{\KMT}{KMT}
\DeclareCategory{\MT}{MT}
\DeclareCategory{\Frm}{Frm}
\DeclareCategory{\BFrm}{BoolFrm}
\DeclareCategory{\CFrm}{CFrm}
\DeclareCategory{\SFrm}{SFrm}
\DeclareCategory{\SHFrm}{SHFrm}
\DeclareCategory{\NFrm}{NormFrm}
\DeclareCategory{\SNFrm}{SNormFrm}
\DeclareCategory{\HFrm}{HFrm}
\DeclareCategory{\RegFrm}{RegFrm}
\DeclareCategory{\SRegFrm}{SRegFrm}
\DeclareCategory{\CRegFrm}{CRegFrm}
\DeclareCategory{\SCRegFrm}{SCRegFrm}
\DeclareCategory{\KRegFrm}{KRegFrm}
\DeclareCategory{\Top}{Top}
\DeclareCategory{\Sob}{Sob}
\DeclareCategory{\IA}{IA}
\DeclareCategory{\Reg}{Reg}
\DeclareCategory{\Set}{Set}
\DeclareCategory{\CABA}{CABA}
\DeclareCategory{\HA}{HA}
\DeclareCategory{\cHA}{cHA}

\newcommand{\sq}{\underline{\mkern-1mu\square\mkern-1mu}\mkern2mu}
\newcommand{\s}{\square}
\renewcommand{\diamond}{\lozenge}
\let\ampersand\&
\renewcommand{\&}{\mathbin{\ampersand}}
\newcommand{\upset}{\mathord{\uparrow}}
\newcommand{\Q}{\mathbb Q}
\newcommand{\N}{\mathbb N}


\setlist[enumerate]{font=\normalfont}

\newcommand{\llcurly}{\mathrel{\lhd\hspace{-1pt}\lhd}}

\tikzset{
  symbol/.style={
    draw=none,
    every to/.append style={
      edge node={node [sloped, allow upside down, auto=false]{$#1$}}}
  }
}

\makeatletter
\patchcmd{\@setaddresses}{\indent}{\noindent}{}{}
\patchcmd{\@setaddresses}{\indent}{\noindent}{}{}
\patchcmd{\@setaddresses}{\indent}{\noindent}{}{}
\patchcmd{\@setaddresses}{\indent}{\noindent}{}{}
\makeatother

\title{McKinsey-Tarski Algebras: An alternative pointfree approach to topology}
\author{Guram Bezhanishvili and Ranjitha Raviprakash}
\address{\newline
Department of Mathematical Sciences\newline
New Mexico State University\newline
Las Cruces, NM 88003\newline
USA\newline}

\email{guram@nmsu.edu}
\email{prakash2@nmsu.edu}

\subjclass[2020]{18F60; 18F70; 06D22; 06E25; 54D10; 54D15.}
\keywords{Interior algebras, frames, topologies, duality theory, separation axioms}

\begin{document}

\begin{abstract}

McKinsey and Tarski initiated the study of interior algebras. We propose complete interior algebras as an alternative pointfree approach to topology. We term these algebras McKinsey-Tarski algebras or simply MT-algebras. Associating with each MT-algebra the lattice of its open elements defines a functor from the category of MT-algebras to the category of frames, which we study in depth. We also study the dual adjunction between the categories of MT-algebras and topological spaces, and show that MT-algebras provide a faithful generalization of topological spaces. Our main emphasis is on developing a unified approach to separation axioms in the language of MT-algebras, which generalizes separation axioms for both topological spaces and frames.

\end{abstract}

\maketitle

\tableofcontents

\section{Introduction}

The aim of pointfree topology is to study topological spaces by means of their lattices of open sets. This results in the category $\Frm$ of frames and its dual category $\bf Loc$ of locales, which are the main subject of study (see \cite{Johnstone,PicadoPultr2012,PicadoPultr2021}). Associating with each topological space the lattice of open sets yields a contravariant functor from the category $\Top$ of topological spaces to $\Frm$. The contravariant functor in the other direction is obtained by working with the space of points (completely prime filters) of a frame. This results in a contravariant adjunction between $\Top$ and $\Frm$, which restricts to a dual equivalence between the reflective subcategories of spatial frames (lattices of open sets) and sober spaces (spaces of points). For details see \cite[Ch.~II]{PicadoPultr2012} and \cref{sec: Prelims} below.

It is well known that topological spaces can alternatively be studied by means of interior operators on their powerset algebras. McKinsey and Tarski \cite{McKinseyTarski} initiated the study of interior operators on an arbitrary Boolean algebra. This has resulted in the notion of an interior algebra, that is a Boolean algebra equipped with a unary function satisfying Kuratowski's axioms for interior.\footnote{McKinsey and Tarski mainly worked with closure operators on Boolean algebras and the corresponding closure algebras. Rasiowa and Sikorski \cite{Rasiowasikorski} made the switch to interior operators, and the name interior algebra was coined by Blok \cite{BlokThesis}.} The main goal of McKinsey and Tarski was to ``set up the foundation of a new algebraic calculus, which could be regarded as a sort of algebra of topology" and to ``study both the internal algebraic properties of this calculus and its relation to topology as ordinarily conceived" (\cite[p.~141]{McKinseyTarski}). 

It turned out that interior algebras and Heyting algebras of their open elements are at the heart of the G\"odel translation of intuitionistic logic into Lewis' modal system $\sf S4$ (see, e.g., \cite{McKinseyTarski1948,Rasiowasikorski,BlokThesis,RedModalLogic,Esakia}). This has triggered an extensive study of interior algebras and their connection to Heyting algebras. However, less emphasis was placed on the study of algebraic properties of those interior algebras that arise as powersets of topological spaces. Other than the early works of Rasiowa and Sikorski (in the 1950s and 1960s), we are only aware of the 1990 PhD of Naturman \cite{naturman1990interior}, where a dual equivalence between $\Top$ and the category of complete and atomic interior algebras is established, which is a direct generalization of Tarski duality between the categories of sets and complete and atomic Boolean algebras (see \cref{sec: Prelims,sec MT-algebras}). 

Complete and atomic interior algebras can be thought of as the generalization of spatial frames. Indeed, each spatial frame is isomorphic to the lattice of open elements of a complete and atomic interior algebra. In order to account for all frames, we need to drop the atomicity assumption. We thus arrive at the main object of study of this paper: complete interior algebras, which we term McKinsey-Tarski algebras or simply MT-algebras. We show that each frame is isomorphic to the lattice of open elements of an appropriate MT-algebra. Thus, MT-algebras can indeed be thought of as the generalization of frames, and taking the lattice of open elements defines an essentially surjective functor from the category $\MT$ of MT-algebras to $\Frm$. 

In analogy with frames, we call atomic MT-algebras spatial. The dual adjunction between $\Top$ and $\Frm$ generalizes to a dual adjunction between $\Top$ and $\MT$, which restricts to a dual equivalence between $\Top$ and the category $\SMT$ of spatial MT-algebras. Thus, the language of MT-algebras allows us to capture all topological spaces. In contrast, in the language of frames we are only able to capture sober spaces.

The bulk of the paper is dedicated to the study of topological separation axioms in the language of MT-algebras. Each separation axiom is described by identifying an appropriate subset of an MT-algebra that join-generates it. We also connect separation axioms for MT-algebras with the corresponding separation axioms for frames, thus providing a unifying approach to separation axioms both in the topological and pointfree settings.

The paper is organized as follows. In \cref{sec: Prelims}, we recall the dual adjunction between the categories of topological spaces and frames, which restricts to a dual equivalence between the categories of sober spaces and spatial frames. We also discuss how this yields Tarski duality between the categories of complete and atomic Boolean algebras and sets. \cref{sec MT-algebras} introduces MT-algebras, which is our main subject of study, and discusses their relationship with topological spaces and frames. \cref{sec: Functor O} examines the properties of the functor $\O:\MT \to \Frm$. In \cref{sec:5}, we define $T_0$-algebras and sober MT-algebras and study the restriction of $\O$ to these subcategories. In addition, we study when the space of atoms of an MT-algebra $M$ is homeomorphic to the space of points of the frame $\O(M)$, from which we derive that $\O$ yields an equivalence between the categories of spatial sober MT-algebras and spatial frames. 

\cref{sec:6} studies $T_{1/2}$-algebras and $T_1$-algebras. We prove that $M$ is a $T_{1/2}$-algebra iff $M$ is isomorphic to the MacNeille completion of the Boolean envelope of $\O(M)$, and that if $M$ is a sober $T_{1/2}$-algebra, then $M$ is spatial iff $\O(M)$ is a spatial frame. We also generalize the corresponding result for spaces by showing that $M$ is a $T_1$-algebra iff $M$ is a $T_{1/2}$-algebra and $\O(M)$ is a subfit frame. In \cref{sec:7}, we introduce Hausdorff and regular MT-algebras. We show that each Hausdorff MT-algebra is sober, thus generalizing the corresponding result for spaces. We also prove that if $M$ is a Hausdorff MT-algebra, then $\O(M)$ is a Hausdorff frame, from which we derive that $\O$ yields an equivalence between the categories of spatial Hausdorff MT-algebras and spatial Hausdorff frames, which further restricts to an equivalence between the categories of spatial regular MT-algebras and spatial regular frames. 

Finally, \cref{sec:8} explores completely regular and normal MT-algebras. We show that a $T_1$-algebra is completely regular iff $\O(M)$ is a completely regular frame, from which we derive that the categories of spatial completely regular MT-algebras and spatial completely regular frames are equivalent. Similar results are also proved for normal MT-algebras and normal frames. In addition, we prove a version of Urysohn's lemma for MT-algebras, from which we derive that each normal MT-algebra is completely regular. The paper concludes with several open problems.

\section{Preliminaries} \label{sec: Prelims}

In this section we recall the well-known dual adjunction between frames and topological spaces, which restricts to a dual equivalence between spatial frames and sober spaces. Restricting to Boolean frames (complete Boolean algebras) then yields a dual equivalence between complete and atomic Boolean algebras (CABAs) and sets, known as Tarski duality.

We recall (see, e.g., \cite[p.~10]{PicadoPultr2012}) that a {\em frame} is a complete lattice $L$ satisfying the join-infinite distributive law
$$
a\wedge\bigvee S=\bigvee\{a\wedge s\mid s\in S\}
$$
for all $a\in L$ and $S\subseteq L$. The order-dual of a frame is a {\em co-frame}. Thus, a co-frame is a complete lattice $L$ satisfying the meet-infinite distributive law
$$
a\vee\bigwedge S=\bigwedge\{a\vee s\mid s\in S\}
$$
for all $a\in L$ and $S\subseteq L$.

A {\em frame homomorphism} is a map $h:L \to M$ between frames preserving finite meets and arbitrary joins. Let $\Frm$ be the category of frames and frame homomorphisms. 

We also let $\Top$ be the category of topological spaces and continuous maps. There is a well-known dual adjunction between $\Top$ and $\Frm$ (see, e.g., \cite[p.~16]{PicadoPultr2012}). It is described as follows. The contravariant functor $\Omega :\Top \to \Frm$ sends a space $X$ to the frame $\Omega(X)$ of opens of $X$ and a continuous map $f:X\to Y$ to the frame homomorphism $\Omega(f):\Omega(Y)\to\Omega(X)$ given by $\Omega(f)(U)=f^{-1}[U]$ for each $U\in\Omega(Y)$. 

To describe the functor in the other direction, we recall that a {\em point} of a frame $L$ is a completely prime filter of $L$. The contravariant functor $\pt:\Frm \to \Top$ then sends each frame $L$ to the space $\pt(L)$ of points of $L$. The topology of $\pt(L)$ is given by $\zeta[L]$ where $\zeta(a)=\{ p\in\pt(L) \mid a\in p\}$ for each $a\in L$. Moreover, $\pt$ sends a frame homomorphism $h:L \to M$ to the continuous map $\pt(h):\pt(M)\to\pt(L)$ given by $\pt(h)(p)=h^{-1}[p]$ for each $p\in\pt(M)$. For $X\in\Top$ and $x\in X$, let $\delta(x)=\{ U\in\Omega(X) \mid x\in U \}$. We then have:

\begin{theorem} [see, e.g., {\cite[p.~17]{PicadoPultr2012}}]
$(\Omega, \pt)$ is a dual adjunction whose units are $\delta : 1_\Top \to \pt\circ\Omega$ and $\zeta : 1_\Frm \to \Omega\circ\pt$.
\end{theorem}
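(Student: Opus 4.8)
The plan is to verify directly that $\delta$ and $\zeta$ are natural transformations whose components are morphisms in the relevant categories, and then to check the two triangle identities; since a contravariant adjunction is determined by such unit data, this suffices. (Equivalently, one may exhibit a natural bijection $\Frm(L,\Omega(X))\cong\Top(X,\pt(L))$ sending a frame homomorphism $h$ to the map $x\mapsto h^{-1}[\delta_X(x)]$, with inverse sending a continuous $g$ to $a\mapsto g^{-1}[\zeta_L(a)]$; the two packagings are interchangeable, and I would present whichever is more economical.)

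First I would check that each $\delta_X:X\to\pt(\Omega(X))$ is well defined and continuous. For $x\in X$ the set $\delta(x)=\{U\in\Omega(X)\mid x\in U\}$ is clearly a filter of $\Omega(X)$, and it is completely prime because $x\in\bigcup\mathcal{U}$ iff $x\in U$ for some $U\in\mathcal{U}$; hence $\delta_X$ lands in $\pt(\Omega(X))$. Continuity, and in fact the first triangle identity, both fall out of the computation $\delta_X^{-1}[\zeta_{\Omega(X)}(U)]=\{x\in X\mid U\in\delta(x)\}=U$ for every $U\in\Omega(X)$, which says precisely that $\Omega(\delta_X)\circ\zeta_{\Omega(X)}=\id_{\Omega(X)}$.

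Next I would check that each $\zeta_L:L\to\Omega(\pt(L))$ is a frame homomorphism. That $\zeta[L]$ is closed under finite meets and arbitrary joins (so is genuinely a topology) and that $\zeta_L$ preserves both operations follow immediately from the defining properties of a completely prime filter $p$: $\top\in p$; $a\wedge b\in p$ iff $a\in p$ and $b\in p$; and $\bigvee S\in p$ iff $s\in p$ for some $s\in S$. (Note $\zeta_L$ need not be injective; only its being a frame homomorphism is needed.) The second triangle identity is the computation $\pt(\zeta_L)\big(\delta_{\pt(L)}(p)\big)=\zeta_L^{-1}[\delta_{\pt(L)}(p)]=\{a\in L\mid p\in\zeta(a)\}=\{a\in L\mid a\in p\}=p$, i.e.\ $\pt(\zeta_L)\circ\delta_{\pt(L)}=\id_{\pt(L)}$.

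Finally I would verify naturality. For continuous $f:X\to Y$, unwinding the definitions gives $\pt(\Omega(f))(\delta_X(x))=\Omega(f)^{-1}[\delta_X(x)]=\{U\in\Omega(Y)\mid f(x)\in U\}=\delta_Y(f(x))$, so $\pt(\Omega(f))\circ\delta_X=\delta_Y\circ f$; dually, for a frame homomorphism $h:L\to M$, $\Omega(\pt(h))(\zeta_L(a))=\pt(h)^{-1}[\zeta_L(a)]=\{p\in\pt(M)\mid h(a)\in p\}=\zeta_M(h(a))$, so $\Omega(\pt(h))\circ\zeta_L=\zeta_M\circ h$. None of these steps is a genuine obstacle; the only point demanding care is keeping track of the variance of the two functors and confirming that the topology on $\pt(L)$ is well defined, which is exactly the assertion that $\zeta_L$ is a frame homomorphism.
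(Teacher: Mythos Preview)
Your proof is correct and is the standard verification of this dual adjunction. Note, however, that the paper does not supply its own proof of this theorem: it is stated with a citation to \cite[p.~17]{PicadoPultr2012} and treated as background, so there is no in-paper argument to compare against. Your direct check of well-definedness, naturality, and the two triangle identities is exactly the approach one finds in that reference.
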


This dual adjunction yields a dual equivalence between the full subcategories of $\Top$ and $\Frm$ which we describe next. Recall 
that a frame $L$ is {\em spatial} if $ a \leq b$ implies that there is $y \in \pt(L)$ such that $ a \in y$ and $ b \notin y$. Let $\SFrm$ be the full subcategory of $\Frm$ consisting of spatial frames. 
We also recall that a closed set in a topological space $X$ is {\em irreducible} if it is not the union of two proper closed sets, and that $X$ is {\em sober} if each irreducible closed set is the closure of a unique point. Let $\Sob$ be the full subcategory of $\Top$ consisting of sober spaces.

We have that $\zeta : L \to \Omega(\pt(L))$ is an isomorphism iff $L$ is spatial, and that $\delta : X \to \pt(\Omega(X))$ is a homeomorphism iff $X$ is sober. We thus arrive at the following well-known theorem.

\begin{theorem} [see, e.g., {\cite[pp.~18, 20]{PicadoPultr2012}}] \label{thm: DP duality}
The dual adjunction between $\Top$ and $\Frm$ restricts to a dual equivalence between $\Sob$ and $\SFrm$.

\begin{center}
\begin{tikzcd}
    \Top \ar[r, bend left] & \Frm \ar[l, bend left]\\
    \Sob \ar[r, <->] \ar[u, symbol=\leq] & \SFrm \ar[u, symbol=\leq]
\end{tikzcd}
\end{center}
\end{theorem}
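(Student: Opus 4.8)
The plan is to derive the dual equivalence by the standard device of restricting a (dual) adjunction to the full subcategories on which its units are isomorphisms, and then using the two characterizations already recalled in the excerpt — that $\zeta_L\colon L\to\Omega(\pt(L))$ is an isomorphism exactly when $L$ is spatial, and that $\delta_X\colon X\to\pt(\Omega(X))$ is a homeomorphism exactly when $X$ is sober — to recognize those subcategories as $\SFrm$ and $\Sob$.

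First I would verify that $\pt$ sends $\SFrm$ into $\Sob$. Let $L$ be a spatial frame. Then $\zeta_L$ is an isomorphism, hence so is $\pt(\zeta_L)\colon\pt(\Omega(\pt(L)))\to\pt(L)$, since functors preserve isomorphisms. The triangle identities of the dual adjunction $(\Omega,\pt)$ read $\Omega(\delta_X)\circ\zeta_{\Omega(X)}=\id_{\Omega(X)}$ and $\pt(\zeta_L)\circ\delta_{\pt(L)}=\id_{\pt(L)}$; from the second one, $\delta_{\pt(L)}=\pt(\zeta_L)^{-1}$ is a homeomorphism, so $\pt(L)$ is sober. Symmetrically, I would verify that $\Omega$ sends $\Sob$ into $\SFrm$: if $X$ is sober then $\delta_X$ is a homeomorphism, hence $\Omega(\delta_X)$ is an isomorphism, and the first triangle identity forces $\zeta_{\Omega(X)}=\Omega(\delta_X)^{-1}$ to be an isomorphism, so $\Omega(X)$ is spatial. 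Since $\Sob$ and $\SFrm$ are full subcategories, nothing extra needs to be checked on morphisms.

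The remaining step is formal. Having shown $\Omega\colon\Sob\to\SFrm$ and $\pt\colon\SFrm\to\Sob$, the transformation $\delta$ restricts to a natural transformation $1_{\Sob}\Rightarrow\pt\circ\Omega$ that is an isomorphism at every object (by the soberness characterization), and $\zeta$ restricts to a natural transformation $1_{\SFrm}\Rightarrow\Omega\circ\pt$ that is an isomorphism at every object (by the spatiality characterization). Hence $\Omega$ and $\pt$ are quasi-inverse, exhibiting a dual equivalence between $\Sob$ and $\SFrm$ that is the claimed restriction of the dual adjunction.

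I do not expect a genuine obstacle: this is the standard ``fixed-point subcategory'' argument, and the two non-formal ingredients are already granted. The only points requiring care are the contravariant bookkeeping — putting the triangle identities in the correct form for a dual adjunction — and confirming that the two functors actually land in the prescribed subcategories, which is handled above. If one preferred to sidestep the triangle identities, one could instead invoke the classical facts that $\Omega(X)$ is always spatial (distinct opens of $X$ are separated by a point $\delta(x)$) and that $\pt(L)$ is always sober; the latter, being the one genuinely topological statement, would then be the main thing to verify (or to cite).
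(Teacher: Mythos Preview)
Your proposal is correct and is precisely the standard argument. The paper does not supply its own proof of this theorem: it is stated as a well-known result with a citation, preceded only by the sentence recording the two characterizations (that $\zeta_L$ is an isomorphism iff $L$ is spatial, and $\delta_X$ a homeomorphism iff $X$ is sober) which you take as input; your proof is exactly the unpacking of ``We thus arrive at the following well-known theorem.''
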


In the diagram above,  
\begin{tikzpicture}
\draw[<-] (0,-.05) to [bend right] (.5,-.05);
\draw[->] (0,.05) to [bend left] (.5,.05);
\end{tikzpicture} 
stands for ``dual adjunction," $\longleftrightarrow$ for ``dual equivalence," and $\leq$ for ``a full subcategory of." For $X\in\Top$, it is common to call the sober space $\pt(\Omega(X))$ the {\em soberification} of $X$. Also, for $L\in\Frm$, the spatial frame $\Omega(\pt(L))$ is called the {\em spatialization} of $L$.

We recall that a frame $L$ is {\em Boolean} provided $L$ is a Boolean algebra. Therefore, Boolean frames are exactly complete Boolean algebras. Let $\BFrm$ be the full subcategory of $\Frm$ consisting of Boolean frames. Observe that frame homomorphisms between Boolean frames are exactly complete Boolean homomorphisms. Thus, $\BFrm$ is the category of complete Boolean algebras and complete Boolean homomorphisms.

If $L$ is a Boolean frame, then every point of $L$ is of the form ${\uparrow}x$ for a unique atom $x$ of $L$. Therefore, there is a bijection between points and atoms of $L$. Thus, $L$ is spatial iff $L$ is atomic. Consequently, the category of spatial Boolean frames is exactly the category $\CABA$ of complete atomic Boolean algebras and complete Boolean homomorphisms. 

Let $L\in\BFrm$. If $x$ is an atom of $L$, then $\zeta(x)= \{\upset{x}\}$, so $\pt(L)$ is a discrete space. We identify the category of discrete spaces with the category $\Set$ of sets and functions, which we view as a full subcategory of $\Top$. The restriction of $\Omega$ to $\Set$ is then simply the contravariant powerset functor $\P:\Set\to\CABA$ which sends a set $X$ to its powerset $\P(X)$ and a function $f:X\to Y$ to the complete Boolean homomorphism $\P(f)=f^{-1}$.

On the other hand, if $h : L \to M$ is a frame homomorphism between Boolean frames, then $h$ is a complete Boolean homomorphism. Therefore, for each $x\in\at(M)$ we have $h^{-1}(\upset{x}) = \upset{h^*(x)}$, where $h^* : M \to L$ is the left adjoint of $h$ (given by $h^*(x)=\bigwedge\{a \in M \mid x \leq h(a)\}$). Thus, we can think of the restriction of $\pt$ to $\BFrm$ as the functor $\at:\BFrm\to\Set$, which associates with each $L\in\BFrm$ the set $\at(L)$ of atoms of $L$, and with each complete Boolean homomorphism $h:L\to M$ the function $\at(h):\at(M)\to\at(L)$ defined by $\at(h)(x)=h^*(x)$



Consequently, the dual adjunction $(\Omega,\pt)$ between $\Top$ and $\Frm$ restricts to the dual adjunction $(\P,\at)$ between $\Set$ and $\BFrm$, and the duality 
between $\Sob$ and $\SFrm$ restricts to Tarski duality between $\Set$ and $\CABA$:

\begin{theorem}[Tarski duality]
	\Set and \CABA are dually equivalent.
\end{theorem}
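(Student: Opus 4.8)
The plan is to derive Tarski duality by restricting the dual equivalence between $\Sob$ and $\SFrm$ recorded in \cref{thm: DP duality} to the full subcategories $\Set\subseteq\Top$ and $\CABA\subseteq\Frm$. Concretely, I would verify three things: (i) every discrete space is sober, so that $\Set$ is a full subcategory of $\Sob$; (ii) $\CABA$ coincides with the intersection $\SFrm\cap\BFrm$, so that $\CABA$ is a full subcategory of $\SFrm$; and (iii) under these inclusions the functors $\Omega$ and $\pt$ restrict, respectively, to the contravariant powerset functor $\P:\Set\to\CABA$ and the atom functor $\at:\CABA\to\Set$ already described above --- in particular, that each of these functors sends the one subcategory into the other. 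Once (i)--(iii) are in place, the units $\delta$ and $\zeta$, which are isomorphisms on $\Sob$ and $\SFrm$ by \cref{thm: DP duality}, remain isomorphisms (naturality and invertibility being inherited) on the smaller subcategories, and the desired dual equivalence $(\P,\at)$ between $\Set$ and $\CABA$ drops out formally.

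For (i), I would argue that if $X$ is discrete and $C\subseteq X$ is an irreducible closed set, then $C$ must be a singleton: otherwise $C$ contains two distinct points $x\ne y$ and, since every subset of a discrete space is closed, $C=\{x\}\cup(C\setminus\{x\})$ exhibits $C$ as a union of two proper closed subsets, contradicting irreducibility. As $\{x\}$ is closed we get $C=\overline{\{x\}}$, and uniqueness of $x$ is clear because discrete spaces are $T_1$; hence $X$ is sober. For (ii), I would note that every complete Boolean algebra satisfies the join-infinite distributive law, so $\CABA\subseteq\BFrm$, and then invoke the observation already made above that a Boolean frame is spatial iff it is atomic, giving $\SFrm\cap\BFrm=\CABA$. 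For (iii), the restriction of $\Omega$ to $\Set$ has already been identified with $\P$ (whose values are CABAs), and the restriction of $\pt$ to $\BFrm$ with $\at$; since $\CABA\subseteq\BFrm$, and since for a CABA $L$ each point is $\upset x$ for an atom $x$ with $\{\upset x\}=\zeta(x)$ open, the space $\pt(L)$ is discrete and hence lies in $\Set$.

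I do not expect any genuine obstacle here: the substantive work has been done in assembling the preliminaries, and the only statements needing even a line of proof are that discrete spaces are sober and that spatial Boolean frames are exactly the complete atomic Boolean algebras. The one mild point to be careful about is checking that the functorial restrictions are well defined on objects and morphisms (that $\P(f)=f^{-1}$ is a complete Boolean homomorphism and that $\at(h)=h^*$ is indeed a function between atom sets), after which the conclusion is immediate from \cref{thm: DP duality}.
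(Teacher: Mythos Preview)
Your proposal is correct and follows exactly the route the paper takes: the theorem is stated without a formal proof, as a consequence of the preceding discussion establishing that $\Omega$ and $\pt$ restrict to $\P$ and $\at$, that spatial Boolean frames are precisely CABAs, and that $\pt(L)$ is discrete for $L\in\BFrm$. The only detail you add that the paper leaves entirely implicit is the verification that discrete spaces are sober, which is indeed the missing ingredient needed to place $\Set$ inside $\Sob$ and invoke \cref{thm: DP duality}.
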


In fact, the composition $\P\circ\at:\BFrm\to\CABA$ is a reflector.

\begin{remark}\label{Tarski}
The units of the dual adjunction $(\P,\at)$ are given by $\eta:1_{\BFrm}\to\P\circ\at$ and $\varepsilon:1_{\Set}\to\at\circ\P$, where for a Boolean frame $B$ and a set $X$, we define $\eta_B : B \to \P(\at(B))$ and $\varepsilon_X : X \to \at(\P(X))$ by 
\[
\eta_{B} (a) =\{ x \in \at(B) \mid x \leq a \} \quad \mbox{and} \quad \varepsilon (x) =\{ x \}.
\]
We then have that $\varepsilon_X$ is a bijection for each $X\in\Set$, $\eta_B$ is an onto complete Boolean homomorphism for each $B\in\BFrm$, and $\eta_B$ is an isomorphism iff $B \in \CABA$.
\end{remark}

\section{McKinsey-Tarski algebras} \label{sec MT-algebras}

In this section we introduce McKinsey-Tarski algebras---the main subject of our study. For this we first recall interior algebras and their connection to Heyting algebras. We then define the category $\MT$ of MT-algebras and study how it relates to the categories $\Top$ and $\Frm$. 
We show that there is a functor $\O:\MT\to\Frm$. We also show that there is a dual adjunction between $\MT$ and $\Top$, which restricts to a dual equivalence between $\Top$ and the reflective subcategory $\SMT$ of $\MT$ consisting of spatial MT-algebras. This duality generalizes Tarski duality for $\CABA$ to $\SMT$.

We start by recalling the well-known definition of an interior algebra \cite{McKinseyTarski}:

\begin{definition} 
Let $B$ be a Boolean algebra. An {\em interior operator} on $B$ is a unary function $\square : B \to B$ satisfying Kuratowski's axioms 
for all $a,b\in B$:
\begin{itemize}
    
    \item $\square 1=1$.
    \item $\square(a \wedge b) = \square a \wedge \square b$. 
    \item $\square a \leq a$. 
    \item $\square a \leq \square \square a$. 
\end{itemize}

An {\em interior algebra} is a pair $(B,\square)$ where $B$ is a Boolean algebra and $\square$ is an interior operator on~$B$.
\end{definition}

\begin{remark}
Each interior operator has the corresponding {\em closure operator} $\diamond=\lnot\square\lnot$, and interior algebras can equivalently be defined as pairs $(B,\diamond)$ where $\diamond$ is a closure operator on $B$. This is the approach originally taken by McKinsey and Tarski \cite{McKinseyTarski}, who termed these algebras {\em closure algebras}. In their influential book \cite{Rasiowasikorski}, Rasiowa and Sikorski mainly worked with the interior operator and referred to these algebras as {\em topological Boolean algebras}.
  The term ``interior algebra" was coined in \cite{BlokThesis}. In modal logic these algebras are also known as {\em $\sf S4$-algebras} since they model the well-known modal system $\sf S4$ (see, e.g., \cite[p.~214]{RedModalLogic}).
\end{remark}

\begin{definition} \cite[p.~146]{McKinseyTarski}
Let $A$ be an interior algebra. 
\begin{enumerate}
\item We call $a \in A$ {\em open} if $a= \square a$. Let $\O(A)$ be the collection of open elements of $A$.
\item We call $a\in A$ {\em closed} if $a=\diamond a$. Let $\C(A)$ be the collection of closed elements of $A$. 
\end{enumerate}
\end{definition}

It is well known (see, e.g., \cite[Prop.~2.2.4]{Esakia}) that $\O(A)$ is a bounded sublattice of $A$ that forms a Heyting algebra, where the Heyting implication is given by $a\to b=\square(\lnot a \vee b)$ for all $a,b\in\O(A)$.  
Dually, $\C(A)$ is a bounded sublattice of $A$ that forms a co-Heyting algebra, where the co-implication is given by $a\leftarrow b=\diamond(b \wedge \lnot a)$ for all $a,b\in\C(A)$ \cite[p.~130]{McKinseyTarski1946}.

In fact, every Heyting algebra is isomorphic to the one of the form $\O(A)$ for some interior algebra $A$. Similarly, every co-Heyting algebra is isomorphic to $\C(A)$ for some interior algebra $A$. 
This can be seen by utilizing the well-known Boolean envelope construction. Recall (see, e.g., \cite[Sec.~V.4]{DL}) that the {\em Boolean envelope} of a bounded distributive lattice $L$ is a pair $(B(L), e)$, where $B(L)$ is a Boolean algebra and $e: L \to B(L)$ is a bounded lattice embedding satisfying the following universal mapping property: for any Boolean algebra $A$ and a bounded lattice homomorphism $h: L \to A$, there is a unique Boolean homomorphism $B(h): B(L) \to A$ such that $B(h)\circ e = h$. 
 
\[\begin{tikzcd}
L \ar[r, "e"] \ar[d,"h"'] & B(L) \ar[dl,dashed,"B(h)"]\\
A
\end{tikzcd}\]
The inclusion map $e: L \to B(L)$ has a right adjoint $\square: B(L) \to L$, and by identifying $L$ with $e[L]$ we get that $(B(L),\square)$ is an interior algebra (see, e.g., \cite[Sec.~2.5]{Esakia}).

\begin{example} \label{example: mt-algebra} \label{example: inverse cts is mt}
Standard examples of interior algebras come from topology. If $X$ is a topological space, then $(\P(X),\int)$ is obviously an interior algebra. Moreover, if $f:X \to Y$ is a continuous function between topological spaces, then $f^{-1}:\P(Y) \to \P(X)$ is a complete Boolean homomorphism satisfying $f^{-1}(\int(A)) \subseteq \int(f^{-1}(A))$.
\end{example}

This example motivates the following key definition.

\begin{definition}
$ $
\begin{enumerate}
\item We call an interior algebra $(B,\square)$ a {\em McKinsey-Tarski algebra} or an {\em MT-algebra} if $B$ is a complete Boolean algebra.
\item  An {\em MT-morphism} between MT-algebras $M$ and $N$ is a complete Boolean homomorphism $h : M\to N$ such that $h(\square_M a) \leq \square_N h(a)$ for each $ a \in M$.
\item Let $\MT$ be the category of MT-algebras and MT-morphisms. (Composition is usual function composition and identity morphisms are identity maps.) 
\end{enumerate}
\end{definition}

\vspace{2mm}

The following is then immediate:

\begin{theorem} \label{thm: Top to MT}
The assignment $X \mapsto (\P(X),\int)$ and $f \mapsto f^{-1}$ defines a contravariant functor $\P : \Top \to \MT$.
\end{theorem}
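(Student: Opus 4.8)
The plan is to verify the three things that make $\P$ a contravariant functor: that each $(\P(X),\int)$ is an object of $\MT$, that each $f^{-1}$ is an MT-morphism, and that the assignment respects identities and reverses composition. All three are essentially repackagings of \cref{example: mt-algebra} together with standard facts about powersets and preimages, so I expect no genuine obstacle; the one point worth spelling out is the interaction of $f^{-1}$ with the interior operator.

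For objects: $\P(X)$ ordered by $\subseteq$ is a complete Boolean algebra, and the topological interior operator satisfies Kuratowski's axioms ($\int X = X$, $\int(A\cap B)=\int A\cap \int B$, $\int A\subseteq A$, $\int A\subseteq \int\int A$). Hence $(\P(X),\int)$ is an interior algebra whose underlying Boolean algebra is complete, i.e.\ an MT-algebra, exactly as noted in \cref{example: mt-algebra}.

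For morphisms: let $f:X\to Y$ be continuous. Preimage commutes with arbitrary unions, arbitrary intersections, and complements, so $f^{-1}:\P(Y)\to\P(X)$ is a complete Boolean homomorphism. It remains to check $f^{-1}(\int A)\subseteq \int f^{-1}(A)$ for every $A\subseteq Y$. I would invoke continuity in the form ``$f^{-1}(U)$ is open whenever $U$ is open'': since $\int A$ is open with $\int A\subseteq A$, the set $f^{-1}(\int A)$ is an open subset of $f^{-1}(A)$, hence is contained in $\int f^{-1}(A)$. This is precisely the inequality required of an MT-morphism, and it is just the containment already recorded in \cref{example: inverse cts is mt}.

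For functoriality: $(\id_X)^{-1}=\id_{\P(X)}$ and $(g\circ f)^{-1}=f^{-1}\circ g^{-1}$ are the usual set-theoretic identities, and the second reverses the order of composition, so $\P$ is contravariant. Thus the only nonformal ingredient is the clause about interior, which as indicated is a restatement of continuity, and the statement follows.
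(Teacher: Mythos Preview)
Your proposal is correct and matches the paper's approach: the paper states the theorem as immediate from \cref{example: mt-algebra} (which is the same as \cref{example: inverse cts is mt}) and gives no further proof, so you have simply spelled out the verifications the paper leaves implicit.
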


We next show that the assignment $M \mapsto \O(M)$ extends to a functor from $\MT$ to $\Frm$. As we already pointed out, $\O(M)$ is a bounded sublattice of $M$. Moreover, for $S\subseteq\O(M)$, we have $\square(\bigvee S)=\bigvee S$. Thus, $\O(M)$ is a frame.
Dually, $\C(M)$ is a 
co-frame. We thus arrive at the following:

\begin{theorem} \label{thm: MT to Frm}
If $M$ is an MT-algebra, then $\O(M)$ is a frame and $\C(M)$ is a co-frame.
\end{theorem}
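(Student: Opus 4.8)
The plan is to prove the two halves of \cref{thm: MT to Frm} by dualizing one another, so I would focus on showing $\O(M)$ is a frame and then obtain the statement about $\C(M)$ by applying the result to the order-dual interior algebra (or equivalently, by noting that $\C(M) = \{\lnot a \mid a \in \O(M)\}$ and that negation is a dual lattice isomorphism of the complete Boolean algebra $M$).

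For the frame part, the key observations are already collected in the excerpt: $\O(M)$ is a bounded sublattice of $M$, and for any $S \subseteq \O(M)$ one has $\square(\bigvee S) = \bigvee S$, so $\bigvee S$ (computed in $M$) lies in $\O(M)$. I would first verify the last claim carefully: since $\square$ is monotone and $\square a \le a$ for all $a$, we get $\square(\bigvee S) \le \bigvee S$; for the reverse inequality, each $s \in S$ satisfies $s = \square s \le \square(\bigvee S)$ (using $s \le \bigvee S$ and monotonicity of $\square$, which follows from $\square(a \wedge b) = \square a \wedge \square b$), hence $\bigvee S \le \square(\bigvee S)$. This shows $\O(M)$ is closed under arbitrary joins taken in $M$, and since it is also closed under finite meets and contains $0,1$, it is a complete lattice in which joins agree with those of $M$; in particular $\O(M)$ is a sub-join-complete-semilattice of $M$ but its finite meets also agree with $M$ (as $\square$ preserves binary meets, $\square a \wedge \square b = \square(a\wedge b)$ is open). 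Then the join-infinite distributive law in $\O(M)$ is inherited directly from the one in the complete Boolean algebra $M$: for $a \in \O(M)$ and $S \subseteq \O(M)$, both $a \wedge \bigvee S$ and $\bigvee\{a \wedge s \mid s \in S\}$ are computed the same way in $M$, where the identity holds because every complete Boolean algebra is a frame.

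The dual half is then automatic: the pair $(B^{\mathrm{op}}, \diamond)$ obtained from $(B,\square)$ by reversing the order and taking $\diamond = \lnot\square\lnot$ is again an interior algebra on the complete Boolean algebra $B^{\mathrm{op}}$ (this is the standard closure-algebra/interior-algebra correspondence recalled in the excerpt), and its open elements are precisely the closed elements of $(B,\square)$; applying the frame result to it shows $\C(M)$ is a frame in the dual order, i.e.\ a co-frame. Alternatively one argues directly that $\C(M)$ is closed under arbitrary meets in $M$ (by the dual argument using $a \le \diamond a$ and monotonicity of $\diamond$) and under finite joins ($\diamond a \vee \diamond b = \diamond(a \vee b)$), and inherits the meet-infinite distributive law from $M$.

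I do not anticipate a real obstacle here; the statement is essentially a bookkeeping consequence of Kuratowski's axioms plus completeness of $B$. The only point that deserves a line of care is making explicit that the lattice operations of $\O(M)$ (both finite meets and arbitrary joins) coincide with those of $M$, since it is \emph{this} coincidence that lets the distributive law transfer for free; without it one might worry that meets in $\O(M)$ differ from meets in $M$ (they do not, precisely because $\square$ preserves binary meets). Once that is noted, the proof is a couple of short paragraphs.
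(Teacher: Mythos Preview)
Your proposal is correct and follows essentially the same approach as the paper: the paper's argument is just the two observations preceding the theorem statement (that $\O(M)$ is a bounded sublattice and that $\square(\bigvee S)=\bigvee S$ for $S\subseteq\O(M)$), together with the implicit ``dually'' for $\C(M)$. You have simply filled in the details the paper leaves to the reader, including the verification that finite meets and arbitrary joins in $\O(M)$ coincide with those in $M$ so that the join-infinite distributive law transfers from the complete Boolean algebra $M$.
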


\begin{remark} \label{square and diamond as adjoints}
$ $
\begin{enumerate}[ref=\theremark(\arabic*)]
\item For each $a\in M$ we have $\square a =\bigvee \{b \in \O(M) \mid b \leq a\}$. Therefore, $\square:M \to \O(M)$ is the right adjoint of the inclusion $\O(M)\hookrightarrow M$. 
\item Similarly, $\diamond a = \bigwedge \{c \in \C(M) \mid a \leq c\}$ for each $a\in M$, and hence $\diamond:M \to \C(M)$ is the left adjoint of the inclusion $\C(M)\hookrightarrow M$. \label[remark]{square and diamond as adjoints-2} 
\end{enumerate}
\end{remark}

 
\begin{lemma} \label{lemma: restriction mt is frame}
If $ h: M\to N$ is an MT-morphism, then its restriction $ h| _{\O(M)} : \O(M)\to\O(N)$ is a frame homomorphism.
\end{lemma}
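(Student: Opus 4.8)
The plan is to verify the three requirements in the definition of a frame homomorphism: that $h$ maps open elements of $M$ to open elements of $N$, that the restriction preserves finite meets (including the top), and that it preserves arbitrary joins.

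First I would check that $h[\O(M)]\subseteq\O(N)$, which is the only point that uses the defining inequality of an MT-morphism. Let $a\in\O(M)$, so $\square_M a=a$. Applying the MT-morphism inequality gives $h(a)=h(\square_M a)\le\square_N h(a)$; since $\square_N h(a)\le h(a)$ always holds by Kuratowski's third axiom, we get $h(a)=\square_N h(a)$, i.e. $h(a)\in\O(N)$. Thus $h|_{\O(M)}$ is a well-defined map $\O(M)\to\O(N)$.

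Next, preservation of finite meets and of $1$ is immediate from the fact that $h$ is a (complete) Boolean homomorphism together with \cref{thm: MT to Frm} and the observation, already recorded in the excerpt, that $\O(M)$ is a bounded \emph{sublattice} of $M$: finite meets and the top in $\O(M)$ (resp. $\O(N)$) are computed as in $M$ (resp. $N$), so $h$ preserving them in $M$ forces the restriction to preserve them in $\O(M)$. For arbitrary joins, recall from the discussion preceding \cref{thm: MT to Frm} that for $S\subseteq\O(M)$ one has $\square_M(\bigvee S)=\bigvee S$, so the join of $S$ in the frame $\O(M)$ coincides with the join $\bigvee S$ taken in $M$, and likewise in $N$. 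Since $h$ preserves arbitrary joins as a complete Boolean homomorphism, $h\big(\bigvee_{\O(M)}S\big)=h\big(\bigvee_M S\big)=\bigvee_N h[S]=\bigvee_{\O(N)}h[S]$, using step one to know each $h(s)$ and the join lie in $\O(N)$.

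There is no real obstacle here; the only substantive step is the first one, and even that is a one-line consequence of the MT-morphism inequality and $\square_N h(a)\le h(a)$. Everything else is bookkeeping about joins and meets being computed the same way in an interior algebra and in its frame of open elements.
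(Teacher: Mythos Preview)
Your argument is correct and matches the paper's proof almost verbatim: the paper also reduces the claim to showing that $h|_{\O(M)}$ is well defined (since $h$ already preserves arbitrary joins and finite meets), and proves this via the chain $h(\square_M a)\le\square_N h(a)\le h(a)=h(\square_M a)$ for $a\in\O(M)$. Your additional remarks about joins in $\O(M)$ coinciding with joins in $M$ are accurate elaborations of what the paper leaves implicit.
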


\begin{proof}
Since $h$ preserves arbitrary joins and finite meets, it is sufficient to show that the restriction $h|_{\O(M)}$ is well defined. Let $a \in \O(M)$. We have 
\[
h(\s_M a) \leq \s_N h(a) \leq h(a) =h(\s_M a).
\] 
Therefore, $h(\s_M a) = \s_N h(a)$, and hence $h(\s_{M}a) \in \O(N)$. 
\end{proof}

As an immediate consequence of \cref{thm: MT to Frm,lemma: restriction mt is frame}, we obtain:

\begin{theorem}
$\O : \MT \to \Frm $ is a covariant functor.
\end{theorem}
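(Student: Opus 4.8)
The statement to prove is that $\O$ is a covariant functor. By this point essentially all the content has been established: \cref{thm: MT to Frm} tells us that $\O(M)$ is a frame for each MT-algebra $M$, so $\O$ is well-defined on objects, and \cref{lemma: restriction mt is frame} tells us that $\O(h) := h|_{\O(M)}$ is a frame homomorphism for each MT-morphism $h : M \to N$, so $\O$ is well-defined on morphisms. What remains is purely formal: checking that $\O$ respects identities and composition. So the plan is simply to assemble these two cited results and then verify functoriality.

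The argument runs as follows. First I would note that on objects, $\O(M)$ is a frame by \cref{thm: MT to Frm}, and on morphisms, for an MT-morphism $h : M \to N$ the map $\O(h) : \O(M) \to \O(N)$ defined by restriction is a frame homomorphism by \cref{lemma: restriction mt is frame}. Next, for the identity morphism $\id_M : M \to M$ (which is an MT-morphism since it trivially preserves arbitrary joins and satisfies $\id_M(\s_M a) = \s_M a \le \s_M \id_M(a)$), the restriction $\id_M|_{\O(M)}$ is the identity map on $\O(M)$, i.e.\ $\O(\id_M) = \id_{\O(M)}$. Finally, for MT-morphisms $h : M \to N$ and $k : N \to P$, the composite $k \circ h$ is again an MT-morphism, and since restriction of a composite is the composite of restrictions, $(k \circ h)|_{\O(M)} = k|_{\O(N)} \circ h|_{\O(M)}$, that is, $\O(k \circ h) = \O(k) \circ \O(h)$. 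This establishes covariant functoriality.

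There is essentially no obstacle here, since the work has already been done in \cref{thm: MT to Frm,lemma: restriction mt is frame}; the only points worth a sentence of care are (i) confirming that $\MT$ really is a category, i.e.\ that identities are MT-morphisms and MT-morphisms compose — but this was already asserted when $\MT$ was defined, so one may simply invoke it — and (ii) that the domain and codomain assignments match up, i.e.\ $\O(h)$ has domain $\O(M)$ and codomain $\O(N)$ when $h : M \to N$, which is immediate from the definition of restriction and \cref{lemma: restriction mt is frame}. In fact, given how directly this follows from the two preceding results, I would expect the proof in the paper to be just a one-line remark ("immediate from \cref{thm: MT to Frm,lemma: restriction mt is frame}") rather than a spelled-out argument, and I would be content to present it that way, perhaps adding the observation about preservation of identities and composition for completeness.
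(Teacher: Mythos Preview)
Your proposal is correct and matches the paper's approach exactly: the paper states this theorem as an immediate consequence of \cref{thm: MT to Frm,lemma: restriction mt is frame} with no further proof, and you correctly anticipated this. Your additional verification of identities and composition is a harmless elaboration of what the paper leaves implicit.
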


\begin{corollary} \label{cor: OP=Omega}
$ \O \circ \P =\Omega$.
\begin{center}
\begin{tikzcd}[row sep=2em]
      \bf{MT}\arrow{rr}{\O}  & & \Frm \\  
      & \Top \arrow{ul} {\P} \arrow[ur, "\Omega"'] 
      \end{tikzcd}
\end{center}
\end{corollary}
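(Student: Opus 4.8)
The statement $\O\circ\P=\Omega$ is a compatibility assertion between three functors, two of which were just constructed ($\O:\MT\to\Frm$ and $\P:\Top\to\MT$) and one of which was recalled in the preliminaries ($\Omega:\Top\to\Frm$). Since all three are (co/contra)variant functors, the plan is to check equality on objects and on morphisms separately, and in each case the verification is essentially unwinding definitions.

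\begin{proof}
Let $X\in\Top$. By definition $\P(X)=(\mathcal{P}(X),\int)$, so $\O(\P(X))$ is the lattice of open elements of this interior algebra, i.e. $\{A\subseteq X\mid A=\int(A)\}$, which is exactly the topology of $X$. Hence $\O(\P(X))=\Omega(X)$ as frames. Now let $f:X\to Y$ be continuous. Then $\P(f)=f^{-1}:\mathcal{P}(Y)\to\mathcal{P}(X)$, and $\O(\P(f))$ is by definition the restriction of $f^{-1}$ to $\O(\P(Y))=\Omega(Y)$, landing in $\O(\P(X))=\Omega(X)$ by \cref{lemma: restriction mt is frame}. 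But the restriction of $f^{-1}$ to opens is precisely $\Omega(f)$. Therefore $\O(\P(f))=\Omega(f)$, and the two composite functors agree on objects and morphisms, so $\O\circ\P=\Omega$.
\end{proof}

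There is essentially no obstacle here: the only point that requires a (already proved) lemma rather than a pure definition-chase is that the restriction of $f^{-1}$ to open sets is well defined as a frame homomorphism, which is \cref{lemma: restriction mt is frame} applied to the MT-morphism $f^{-1}$ (equivalently, one can note directly that preimages of open sets under a continuous map are open). One should also make explicit, if desired, that $\P$ and $\O$ compose to a contravariant functor matching the variance of $\Omega$ — but since $\O$ is covariant and $\P$ is contravariant, the composite $\O\circ\P$ is contravariant, exactly like $\Omega$, so the diagram in the statement commutes on the nose. The whole corollary is thus immediate from the constructions of $\O$ and $\P$ together with the definition of $\Omega$ recalled in \cref{sec: Prelims}.
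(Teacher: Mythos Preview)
Your proof is correct and matches the paper's approach: the paper states the corollary without proof, treating it as immediate from the definitions of $\P$, $\O$, and $\Omega$, which is exactly the definition-chase you carry out.
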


 
We now show that the functor $\at:\BFrm\to\Set$ extends to a functor $\at:\MT\to\Top$. Let $M \in \MT$, $X = \at(M)$, and recall that 
$\eta : M \to \P(X)$ is given by 
$
\eta (a) =\{ x \in X \mid x \leq a \}.
$
By \cref{Tarski}, $\eta$ is an onto complete Boolean homomorphism. Therefore, the restriction of $\eta$ to $\O(M)$ is a frame homomorphism. Hence, the image $\tau:=\eta[\O(M)]$ is a topology on $X$. We thus have:

\begin{lemma} \label{lem: MT to Top}
Let $ M \in \MT$. Then $(\at(M),\tau) \in \Top$ and $\eta: M\to\P(\at(M))$ is an onto MT-morphism.
\end{lemma}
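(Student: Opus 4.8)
The plan is to exploit what is already in hand: by \cref{Tarski}, $\eta\colon M\to\P(X)$ (where $X=\at(M)$) is an onto complete Boolean homomorphism, so in particular it preserves arbitrary joins and finite meets, and $\eta(0)=\varnothing$, $\eta(1)=X$. Since joins and finite meets in $\P(X)$ are just unions and finite intersections, and since $\O(M)$ is a bounded sublattice of $M$ closed under arbitrary joins (recall $\square\bigl(\bigvee S\bigr)=\bigvee S$ for $S\subseteq\O(M)$), the image $\tau=\eta[\O(M)]$ contains $\varnothing$ and $X$ and is closed under finite intersections and arbitrary unions. Hence $\tau$ is a topology on $X$ and $(\at(M),\tau)\in\Top$; this is essentially the observation made just before the statement, and I would only spell out these three closure conditions.

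For the second assertion, onto-ness is immediate from \cref{Tarski}. It remains to check the inequality $\eta(\square_M a)\le\square_{\P(X)}\eta(a)$ for each $a\in M$, where $\square_{\P(X)}=\int_\tau$. The key point is that $\square_M a\in\O(M)$, so $\eta(\square_M a)\in\eta[\O(M)]=\tau$, i.e.\ $\eta(\square_M a)$ is a $\tau$-open subset of $X$. Moreover $\square_M a\le a$ and $\eta$ is order preserving (being a Boolean homomorphism), so $\eta(\square_M a)\subseteq\eta(a)$. An open subset of $\eta(a)$ is contained in $\int_\tau\eta(a)$, whence $\eta(\square_M a)\subseteq\int_\tau\eta(a)=\square_{\P(X)}\eta(a)$, as required. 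Combining, $\eta$ is an onto MT-morphism.

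There is no real obstacle here; the argument is a bookkeeping assembly of \cref{Tarski} and the definitions. The only mild subtlety worth flagging is the identification of the lattice operations of the subframe $\tau\subseteq\P(X)$ with set-theoretic intersection and union, which is what lets ``$\tau$ is closed under the relevant operations'' upgrade to ``$\tau$ is a topology''; this is automatic because $\eta$ preserves finite meets and arbitrary joins and these are computed pointwise in $\P(X)$.
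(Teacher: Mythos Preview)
Your proposal is correct and follows essentially the same approach as the paper's proof: both argue that $\eta(\square a)$ is open (since $\square a\in\O(M)$ and $\tau=\eta[\O(M)]$) and contained in $\eta(a)$, hence contained in $\int_\tau\eta(a)$. You spell out the verification that $\tau$ is a topology in slightly more detail than the paper, which simply points back to the observation preceding the lemma, but the content is identical.
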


\begin{proof}
We already observed that $(\at(M),\tau) \in \Top$ and that $\eta$ is an onto Boolean homomorphism. Moreover, since $\eta (\square a )$ is open, from $\eta (\square a )\subseteq \eta (a)$ it follows that $\eta (\square a )\subseteq \int(\eta (a))$ for each $a\in M$. Thus, $\eta$ is an onto MT-morphism. 
\end{proof}

\begin{remark} \label{rem: closed sets}
The co-frame of closed sets of $(\at(M),\tau)$ is $\eta[\C(M)]$. Also, since $\eta$ is an MT-morphism, we obtain that 
$
\overline{\eta(a)}=(\int(\eta(\neg a)))^c \subseteq (\eta ( \square \neg a))^c = \eta(\diamond a)
$
for each $a\in M$.
\end{remark}

\begin{lemma}\label{thm: at(h) is well defined}
    If $h : M \to N$ is an MT-morphism, then $\at(h) : \at(N) \to \at(M)$ is a continuous map.
\end{lemma}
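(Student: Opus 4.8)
The plan is to recall from Tarski duality that for a complete Boolean homomorphism $h : M \to N$, the induced map on atoms is $\at(h)(x) = h^*(x)$, where $h^*$ is the left adjoint of $h$; equivalently, for $x \in \at(N)$, $h^*(x)$ is the unique atom $y$ of $M$ with $x \leq h(y)$. So I already have a well-defined function $\at(h) : \at(N) \to \at(M)$ from the underlying $\BFrm$-structure, and the only thing left to check is continuity with respect to the topologies $\tau_N = \eta_N[\O(N)]$ on $\at(N)$ and $\tau_M = \eta_M[\O(M)]$ on $\at(M)$.

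Since $\tau_M$ is exactly $\{\eta_M(a) \mid a \in \O(M)\}$, it suffices to show that $\at(h)^{-1}[\eta_M(a)]$ is open in $\at(N)$ for each $a \in \O(M)$. The natural guess is that this preimage equals $\eta_N(h(a))$, which lies in $\tau_N$ because $h(a) \in \O(N)$ by \cref{lemma: restriction mt is frame} (the restriction $h|_{\O(M)}$ lands in $\O(N)$). So the key computation is the set identity
\[
\at(h)^{-1}[\eta_M(a)] = \eta_N(h(a)) \qquad \text{for all } a \in \O(M).
\]
Unpacking the definitions: $x \in \at(h)^{-1}[\eta_M(a)]$ iff $h^*(x) \in \eta_M(a)$ iff $h^*(x) \leq a$; and $x \in \eta_N(h(a))$ iff $x \leq h(a)$. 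By the adjunction $h^* \dashv h$, the conditions $h^*(x) \leq a$ and $x \leq h(a)$ are equivalent, which closes the argument. (Note this identity in fact holds for every $a \in M$, not just open $a$; openness is only invoked to conclude that $\eta_N(h(a)) \in \tau_N$.)

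I expect no real obstacle here — the statement is essentially a bookkeeping exercise combining the $\BFrm$-level description of $\at(h)$ via left adjoints with the observation that $h$ preserves open elements. The one point requiring a moment's care is making sure the relevant adjunction is correctly oriented: $h$ preserves arbitrary joins (as a complete Boolean homomorphism), hence has a left adjoint $h^*$, and it is this left adjoint that defines $\at(h)$; the Galois equivalence $h^*(x) \leq a \iff x \leq h(a)$ is then exactly what makes the preimage computation work. Everything else is immediate from the preliminaries on Tarski duality and \cref{lemma: restriction mt is frame}.
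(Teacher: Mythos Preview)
Your proof is correct and follows essentially the same route as the paper's: both verify the identity $\at(h)^{-1}[\eta_M(a)] = \eta_N(h(a))$ for open $a$ via the adjunction $h^* \dashv h$. One small slip in your final paragraph: preservation of arbitrary \emph{meets} (not joins) is what guarantees the existence of a left adjoint $h^*$; join-preservation would yield a right adjoint. Since $h$ is a complete Boolean homomorphism it preserves both, so the conclusion is unaffected, but the stated justification should be corrected.
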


\begin{proof}

We recall from Tarski duality that the left adjoint $h^*:N\to M$ of $h$ restricts to a map $\at(h):\at(N)\to\at(M)$. Therefore, it is sufficient to show that $[\at(h)]^{-1}\eta_M(\square_M a) = \eta_N(h(\square_M a))$ for each $a \in M$. We have 
\begin{align*}
    x \in [\at(h)]^{-1}\eta_M(\square_M a) 
    &\iff \at(h)(x) \in \eta_M(\square_M a)\\
    &\iff h^\ast(x) \leq \square_M a\\
    &\iff x \leq h(\square_M a)\\
    &\iff x \in \eta_N(h(\square_M a)).
\end{align*}
Thus, $\at(h)$ is continuous.
\end{proof}

As an immediate consequence of \cref{lem: MT to Top,thm: at(h) is well defined}, we obtain:

\begin{theorem} \label{thm: MT to Top}
$\at : \MT \to \Top$ is a contravariant functor.
\end{theorem}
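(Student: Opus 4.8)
The plan is to assemble this directly from the two lemmas just proved. First I would verify that $\at$ is well defined on objects: given $M\in\MT$, Lemma~\ref{lem: MT to Top} tells us that $(\at(M),\tau)$ is a genuine topological space, where $\tau=\eta_M[\O(M)]$. Next, on morphisms: given an MT-morphism $h:M\to N$, Lemma~\ref{thm: at(h) is well defined} tells us that $\at(h):\at(N)\to\at(M)$ is a continuous map. (Recall that $\at(h)$ is defined as the restriction of the left adjoint $h^*:N\to M$, which restricts to atoms by Tarski duality; note the arrow reversal, giving contravariance.)

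It then remains to check the two functoriality equations. For identities, if $h=\id_M$ then $h^*=\id_M$ as well (the left adjoint of the identity is the identity), so $\at(\id_M)=\id_{\at(M)}$. For composition, suppose $h:M\to N$ and $k:N\to P$ are MT-morphisms; I would show $\at(k\circ h)=\at(h)\circ\at(k)$ as maps $\at(P)\to\at(M)$. This follows from the fact that taking left adjoints is itself contravariantly functorial: $(k\circ h)^*=h^*\circ k^*$, since $h^*\circ k^*$ is left adjoint to $k\circ h$ by uniqueness of adjoints (composing the adjunctions $h^*\dashv h$ and $k^*\dashv k$). Restricting both sides to atoms gives the claim. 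One should also note in passing that $k\circ h$ is indeed an MT-morphism, so that $\at(k\circ h)$ makes sense: it is a composite of complete Boolean homomorphisms, hence a complete Boolean homomorphism, and $(k\circ h)(\square_M a)=k(h(\square_M a))\le k(\square_N h(a))\le \square_P k(h(a))$, using that $k$ is an MT-morphism and monotone.

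The main subtlety — really the only nonroutine point — is establishing that left adjoints compose contravariantly and that this is compatible with the restriction to atoms, i.e.\ that $\at(k\circ h)=\at(h)\circ\at(k)$ as \emph{continuous maps} and not merely as functions; but continuity of each factor is already guaranteed by Lemma~\ref{thm: at(h) is well defined}, so the equality of underlying functions suffices. Everything else is bookkeeping already done in the two cited lemmas, so the proof is short: it is essentially the sentence ``immediate from Lemmas~\ref{lem: MT to Top} and~\ref{thm: at(h) is well defined}, together with the contravariant functoriality of the left-adjoint assignment on complete Boolean algebras.''
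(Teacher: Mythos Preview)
Your proposal is correct and matches the paper's approach exactly: the paper states the theorem as an immediate consequence of \cref{lem: MT to Top,thm: at(h) is well defined} without further comment. If anything, you have been more careful than the paper by spelling out the functoriality equations via $(k\circ h)^*=h^*\circ k^*$; the paper leaves this implicit.
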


\begin{lemma} \label{lem: epsilon homo}
    Let $X\in\Top$. Then $\varepsilon : X \to \at (\P(X))$ is a homeomorphism.
\end{lemma}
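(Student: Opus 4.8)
The plan is to invoke Tarski duality (\cref{Tarski}) to get the set-level statement for free, and then argue that the homeomorphism is topological. Recall that $\varepsilon_X : X \to \at(\P(X))$ sends $x$ to the atom $\{x\}$ of $\P(X)$. By \cref{Tarski}, $\varepsilon_X$ is already a bijection of sets, so the only thing to check is that $\varepsilon_X$ and its inverse are continuous, i.e.\ that $\varepsilon_X$ carries the topology $\Omega(X)$ on $X$ onto the topology $\tau$ on $\at(\P(X))$ obtained as in \cref{lem: MT to Top}.

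The key computation is to unwind what $\tau$ is. By definition $\tau = \eta[\O(\P(X))]$, where $\eta : \P(X) \to \P(\at(\P(X)))$ is the unit map $a \mapsto \{A \in \at(\P(X)) \mid A \leq a\}$ and $\O(\P(X))$ is the frame of open elements of the interior algebra $(\P(X),\int)$. But the open elements of $(\P(X),\int)$ are precisely the open subsets of $X$, so $\O(\P(X)) = \Omega(X)$. For $U \in \Omega(X)$, the atoms of $\P(X)$ below $U$ are exactly the singletons $\{x\}$ with $x \in U$, so $\eta(U) = \{\{x\} \mid x \in U\} = \varepsilon_X[U]$. Hence $\tau = \{\varepsilon_X[U] \mid U \in \Omega(X)\}$, which says precisely that $\varepsilon_X$ is a homeomorphism from $(X,\Omega(X))$ to $(\at(\P(X)),\tau)$.

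I would organize the write-up as: first recall from \cref{Tarski} that $\varepsilon_X$ is a bijection; then identify $\O(\P(X)) = \Omega(X)$ (this is essentially the content of \cref{cor: OP=Omega}, or just the observation that $\int$-fixed subsets are the open sets); then compute $\eta(U) = \varepsilon_X[U]$ for each open $U$; and conclude that $\tau = \varepsilon_X[\Omega(X)]$, so $\varepsilon_X$ is a homeomorphism. There is no real obstacle here — every step is a direct unwinding of definitions already set up in the excerpt. The only mild subtlety is bookkeeping: being careful that the topology $\tau$ on $\at(\P(X))$ is the one defined via $\eta$ restricted to open elements, and matching it against the image under $\varepsilon_X$ of the original topology, but this is routine.
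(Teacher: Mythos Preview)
Your proposal is correct and follows essentially the same approach as the paper: invoke Tarski duality for the bijection, then compute that $\eta(U)=\varepsilon_X[U]$ for each open $U$ (the paper phrases this as $\varepsilon^{-1}(\eta(U))=U$ and then deduces $\varepsilon(U)=\eta(U)$ from bijectivity, which is the same computation). The only cosmetic difference is that the paper separates the argument into ``$\varepsilon$ is continuous'' and ``$\varepsilon$ is open,'' whereas you identify $\tau$ with $\varepsilon_X[\Omega(X)]$ in one step.
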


\begin{proof}
     
    We recall that $\varepsilon : X \to \at (\P(X))$ is given by $\varepsilon(x) = \{x\}$ for each $x\in X$. It follows from Tarski duality that $\varepsilon$ is a bijection. We show that $\varepsilon$ is a homeomorphism. 
    Let $U$ be an open subset of $X$, so an open element of  $(\P(X),\int)$. We have 
      \begin{equation*}
        x \in \varepsilon^{-1}(\eta(U)) 
        \iff \varepsilon(x) \in \eta(U)
        \iff \{x\} \subseteq U
        \iff x \in U.
    \end{equation*}
    Therefore, $\varepsilon^{-1}(\eta(U)) = U$, and so $\varepsilon$ is continuous. Moreover, since $\varepsilon$ is a bijection, the previous identity implies $\varepsilon(U)=\eta(U)$, and hence $\varepsilon$ is open. Thus, $\varepsilon$ is a homeomorphism.   
 \end{proof}
As an immediate consequence of  
\cref{thm: Top to MT,thm: MT to Top}, and \cref{lem: MT to Top,lem: epsilon homo}, the dual adjunction $(\P,\at)$ between $\Set$ and $\BFrm$ lifts to $\Top$ and $\MT$, and we obtain:

\begin{theorem} \label{thm: dual adj MT and Top}
$(\P, \at)$ is a dual adjunction between $\Top$ and $\MT$ with units $\varepsilon : 1_{\Top} \to \at\circ\P$ and $\eta : 1_{\MT} \to \P\circ\at$.
\end{theorem}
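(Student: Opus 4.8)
The plan is to deduce this from the dual adjunction $(\P,\at)$ between $\Set$ and $\BFrm$ (Tarski duality together with \cref{Tarski}), by checking that all the relevant data lift to $\Top$ and $\MT$. We already have the two contravariant functors $\P:\Top\to\MT$ (\cref{thm: Top to MT}) and $\at:\MT\to\Top$ (\cref{thm: MT to Top}), and these sit over the functors $\P:\Set\to\BFrm$ and $\at:\BFrm\to\Set$ via the forgetful functors $\MT\to\BFrm$ and $\Top\to\Set$, both of which are faithful. Consequently, everything that must be verified---naturality of the two unit transformations and the two triangle identities---is an equation between underlying functions or underlying Boolean homomorphisms, and each such equation is already known from Tarski duality. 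What genuinely needs new input is only that the components $\varepsilon_X$ and $\eta_M$ are morphisms in the right categories.

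First I would record that for $X\in\Top$ the map $\varepsilon_X:X\to\at(\P(X))$ is a morphism of $\Top$; this is \cref{lem: epsilon homo}, which in fact shows it is a homeomorphism. Next, for $M\in\MT$ the map $\eta_M:M\to\P(\at(M))$ is a morphism of $\MT$; this is \cref{lem: MT to Top}. Naturality of $\varepsilon$ then amounts, for a continuous $f:X\to Y$, to the equality $\at(\P(f))\circ\varepsilon_X=\varepsilon_Y\circ f$ of continuous maps; it holds because it already holds as an equality of functions by naturality of $\varepsilon$ for $(\P,\at)$ on $\Set$ and $\BFrm$, and the forgetful functor $\Top\to\Set$ is faithful. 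Naturality of $\eta$ is checked the same way, using that an MT-morphism is in particular a complete Boolean homomorphism and the corresponding square already commutes in $\BFrm$.

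Finally I would verify the triangle identities $\at(\eta_M)\circ\varepsilon_{\at(M)}=1_{\at(M)}$ and $\P(\varepsilon_X)\circ\eta_{\P(X)}=1_{\P(X)}$. Again these are identities of continuous maps (resp.\ MT-morphisms) whose underlying identities of functions (resp.\ Boolean homomorphisms) are precisely the triangle identities of the Tarski-level dual adjunction, and so by faithfulness of the forgetful functors they hold in $\Top$ and $\MT$ as well. Since a dual adjunction is exactly the data of two natural unit transformations satisfying the triangle identities, this completes the argument.

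I do not expect a genuine obstacle: all the mathematical content has been front-loaded into \cref{lem: MT to Top} (that $\eta_M$ is an MT-morphism) and \cref{lem: epsilon homo} (that $\varepsilon_X$ is a homeomorphism), and the remaining verifications are formal, following the same pattern by which $(\P,\at)$ on $\Set$ and $\BFrm$ was itself obtained from $(\Omega,\pt)$. The only point deserving a moment's care is bookkeeping the variance: one must make sure the unit $\eta$ lives on $\MT$ and $\varepsilon$ on $\Top$, with the triangle identities indexed as above for a contravariant adjunction.
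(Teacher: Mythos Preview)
Your proposal is correct and follows essentially the same approach as the paper: the paper states the theorem as an immediate consequence of \cref{thm: Top to MT}, \cref{thm: MT to Top}, \cref{lem: MT to Top}, and \cref{lem: epsilon homo}, remarking only that the dual adjunction $(\P,\at)$ between $\Set$ and $\BFrm$ lifts to $\Top$ and $\MT$. You have simply spelled out this lifting argument in detail, using faithfulness of the forgetful functors to reduce naturality and the triangle identities to the Tarski-level adjunction.
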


\begin{lemma}\label{preserves interior}
Let $M$ be an MT-algebra.
\begin{enumerate}[ref=\thelemma(\arabic*)]
\item $\eta:M\to\P(\at(M))$ is one-to-one iff $M$ is atomic. 
\item If $M$ is atomic, then $\eta(\square a)=\int(\eta(a))$ for each $a\in M$.\label[lemma]{lem: eta}
\end{enumerate}
\end{lemma}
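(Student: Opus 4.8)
The plan is to handle the two parts separately, using Tarski duality (as recorded in \cref{Tarski}) as the starting point. For part~(1), recall that $\eta:M\to\P(\at(M))$ is a complete Boolean homomorphism, and by \cref{Tarski} it is an isomorphism of Boolean algebras iff $M$ is atomic (since $M$ being a complete Boolean algebra is given, spatiality and atomicity coincide). Since $\eta$ is always onto, being an isomorphism is the same as being one-to-one; so $\eta$ is one-to-one iff $M$ is atomic. The only thing to double-check is that the implication ``$\eta$ one-to-one $\Rightarrow$ $M$ atomic'' really does follow: if $\eta$ were one-to-one but $M$ had no atoms, then $\at(M)=\varnothing$, forcing $\P(\at(M))$ to be trivial and hence $M$ trivial, in which case $M$ is atomic vacuously; more generally, if $a\in M$ is nonzero and below it there is no atom, then $\eta(a)=\{x\in\at(M)\mid x\le a\}=\varnothing=\eta(0)$, contradicting injectivity. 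So $M$ is atomic. This direction is essentially immediate.

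For part~(2), assume $M$ is atomic, so by part~(1) $\eta$ is a Boolean isomorphism (in particular an order isomorphism) onto $\P(\at(M))$. Fix $a\in M$. One inclusion, $\eta(\square a)\subseteq\int(\eta(a))$, is already contained in \cref{lem: MT to Top} (the proof there shows $\eta(\square a)$ is open and contained in $\eta(a)$, hence in its interior). For the reverse inclusion $\int(\eta(a))\subseteq\eta(\square a)$: the interior $\int(\eta(a))$ is an open set of $(\at(M),\tau)$, and by definition $\tau=\eta[\O(M)]$, so there is $b\in\O(M)$ with $\eta(b)=\int(\eta(a))$. Then $\eta(b)\subseteq\eta(a)$, and since $\eta$ is an order isomorphism this gives $b\le a$ in $M$. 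But $b$ is open, so by \cref{square and diamond as adjoints}, $b\le\bigvee\{c\in\O(M)\mid c\le a\}=\square a$. Applying the order isomorphism $\eta$ again yields $\int(\eta(a))=\eta(b)\subseteq\eta(\square a)$. Combining the two inclusions gives $\eta(\square a)=\int(\eta(a))$.

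The one place to be slightly careful—and the only genuine obstacle—is making sure we may legitimately pass between inequalities in $M$ and inclusions in $\P(\at(M))$ in both directions; this is exactly where atomicity is used, via part~(1), since a surjective Boolean homomorphism need not reflect order but an isomorphism does. Everything else is bookkeeping with the adjunction $\square:M\to\O(M)$ from \cref{square and diamond as adjoints} and the description $\tau=\eta[\O(M)]$ of the topology from \cref{lem: MT to Top}.
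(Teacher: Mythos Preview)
Your proof is correct and follows essentially the same route as the paper's. Both parts appeal to Tarski duality for (1), and for (2) both use the injectivity of $\eta$ (from atomicity) to reflect the inclusion $\eta(b)\subseteq\eta(a)$ back to $b\le a$ in $M$, then invoke the adjunction $\square a=\bigvee\{b\in\O(M)\mid b\le a\}$; the paper does this via an element-by-element chain of equivalences for atoms $x$, while you argue with the whole open set $\int(\eta(a))=\eta(b)$ at once, but the substance is identical.
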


\begin{proof} 
(1) This follows from Tarski Duality.

(2) Let $a\in M$. For $x \in \at(M)$ we have
\begin{align*}
    x \in \int(\eta(a))
    &\iff \exists b \in M : x \in \eta(\square b) \subseteq \eta(a)\\ 
    &\iff x \leq \square b \leq a && \text{by (1)}\\
    &\iff x \leq \square b \leq \square a\\ 
    &\iff x \in \eta(\square a). 
\end{align*}
Thus, $\eta(\square a)=\int(\eta(a))$.
    \end{proof}
    
    \begin{remark}
    It is immediate from \cref{lem: eta} that if $M$ is atomic, then $\eta(\diamond a) = \overline{\eta(a)}$ for each $a \in M$. 
    \end{remark}
    
    \begin{example} \label{exa: simple box}
 
We show that \cref{lem: eta} may fail if $M$ is not atomic. We recall (see \cite [p.~221]{Halmos1956}) that an interior operator $\s$ on a Boolean algebra $B$ is \emph{simple} if 
$$\s a =\begin{cases}
    1 &\text {if } a=1\\
    0 &\text {otherwise}.
    \end{cases}$$
Let $B$ be a complete atomless Boolean algebra, $2=\{0,1\}$ the two-element Boolean algebra, $M=B \times 2$, and $\square$ a simple interior operator on $M$.
Then $M \in \MT$ and $x=(0,1)$ is the only atom of $M$, so $\at(M)=\{x\}$. We have $\eta(\square x)=\varnothing$ since $\square x=0$, but $\eta(x)=\at(M)$, and so $\int(\eta(x))=\at(M)$.
\end{example}

\begin{definition}
We call atomic MT-algebras {\em spatial}. Let $\SMT$ be the full subcategory of $\MT$ consisting of spatial MT-algebras.
\end{definition}
    
    \begin{theorem} \label{thm: SMT dual to Top}
    The dual adjunction of \cref{thm: dual adj MT and Top} restricts to a dual equivalence between $\Top$ and $\SMT$.
    \end{theorem}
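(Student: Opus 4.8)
The plan is to show that the dual adjunction $(\P,\at)$ restricts to a dual equivalence between $\Top$ and $\SMT$ by verifying that, on the relevant subcategories, both units become isomorphisms. Recall that for a general adjunction one has a dual equivalence between the full subcategories on which the units are invertible, so it suffices to identify these subcategories correctly. We already know from \cref{lem: epsilon homo} that the unit $\varepsilon_X : X \to \at(\P(X))$ is a homeomorphism for \emph{every} $X \in \Top$. So the only unit that needs checking is $\eta_M : M \to \P(\at(M))$.

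First I would observe that $\P$ lands inside $\SMT$: for any space $X$, the algebra $(\P(X),\int)$ has underlying Boolean algebra $\P(X)$, which is complete and atomic, so $\P(X) \in \SMT$. Thus $\P : \Top \to \SMT$ is well defined. Next, by \cref{preserves interior}(1), $\eta_M$ is one-to-one iff $M$ is atomic, i.e. iff $M \in \SMT$. Combined with \cref{Tarski}, which tells us $\eta_M$ is always an onto complete Boolean homomorphism, we get that for $M \in \SMT$ the map $\eta_M$ is a bijective complete Boolean homomorphism. It remains to check that $\eta_M$ is an MT-isomorphism, i.e. that both $\eta_M$ and $\eta_M^{-1}$ are MT-morphisms. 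That $\eta_M$ is an MT-morphism is \cref{lem: MT to Top}. For the inverse, using \cref{lem: eta} (valid since $M$ is atomic) we have $\eta_M(\square a) = \int(\eta_M(a))$ for all $a \in M$; writing $U = \eta_M(a)$ and $a = \eta_M^{-1}(U)$, this says $\eta_M^{-1}(\int U) = \square\,\eta_M^{-1}(U)$, so $\eta_M^{-1}$ is in fact an MT-morphism (indeed it commutes with the interior operators on the nose). Hence $\eta_M$ is an isomorphism in $\MT$, and it clearly lies in $\SMT$.

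Putting this together: restricting the functors of \cref{thm: dual adj MT and Top} to $\Top$ and $\SMT$, the unit $\varepsilon$ is a natural isomorphism on $\Top$ (by \cref{lem: epsilon homo}) and the unit $\eta$ is a natural isomorphism on $\SMT$ (by the previous paragraph). Naturality of $\varepsilon$ and $\eta$ is inherited from \cref{thm: dual adj MT and Top}. Therefore $(\P,\at)$ restricts to a dual equivalence between $\Top$ and $\SMT$.

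I do not anticipate a serious obstacle here, since all the hard lemmas (\cref{lem: epsilon homo}, \cref{lem: MT to Top}, \cref{preserves interior}) are already in place; the only point requiring a little care is confirming that $\at$ genuinely maps into $\Top$ when restricted (it does, with no restriction needed) and that $\eta_M^{-1}$ preserves interiors, which is exactly the content of \cref{lem: eta} read backwards. A minor bookkeeping issue is to note explicitly that an MT-morphism which is a bijective complete Boolean homomorphism and whose set-theoretic inverse is also an MT-morphism is an isomorphism in $\MT$; this is immediate from the definition of the category.
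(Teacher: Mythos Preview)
Your proposal is correct and follows essentially the same approach as the paper: both argue that the units $\varepsilon$ and $\eta$ are isomorphisms on the relevant subcategories, citing \cref{lem: epsilon homo} and \cref{preserves interior}. You simply spell out in more detail why $\eta_M$ is an MT-isomorphism (in particular, using \cref{lem: eta} to verify that $\eta_M^{-1}$ preserves interior), whereas the paper leaves this implicit in its appeal to \cref{preserves interior}.
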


\begin{proof} 
It is sufficient to observe that $\varepsilon : X \to \at(\P(X))$ is a homeomorphism (see \cref{lem: epsilon homo}) and that $\eta : M \to \P(\at(M))$ is an MT-isomorphism iff $M \in \SMT$ (see \cref{preserves interior}).
\end{proof}

\begin{remark}
The above theorem extends Tarski duality by incorporating interior operators in the signature of CABAs. It goes at least as far back as \cite[Thm.~2.1.7]{naturman1990interior}.
\end{remark}

As a consequence of \cref{thm: SMT dual to Top}, we obtain that the composition $\P\circ\at:\MT\to\SMT$ is a reflector. Thus, we have:

\begin{theorem}
\SMT is a reflective subcategory of \MT.
\end{theorem}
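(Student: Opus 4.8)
The plan is to realize the reflector explicitly as the composite $\P\circ\at$ and to use the unit $\eta$ of \cref{thm: dual adj MT and Top} as the unit of the reflection. First I would observe that $\P\circ\at$ actually lands in $\SMT$: for any $M\in\MT$ the algebra $\P(\at(M))$ is a genuine powerset algebra, hence complete and atomic, hence spatial. Since $\SMT$ is full in $\MT$, this lets us regard $\P\circ\at$ as a functor $\MT\to\SMT$ with no extra work.

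Next I would verify the universal property of the unit. Fix $M\in\MT$ and $N\in\SMT$, and let $f:M\to N$ be an MT-morphism; the goal is to produce a unique MT-morphism $g:\P(\at(M))\to N$ with $g\circ\eta_M=f$. For existence, set $g:=\eta_N^{-1}\circ\P(\at(f))$. This makes sense because $N$ is spatial, so by \cref{thm: SMT dual to Top} (equivalently \cref{preserves interior}) $\eta_N$ is an MT-isomorphism; and $g$ is an MT-morphism since it is a composite of MT-morphisms, using that $\at(f)$ is continuous (\cref{thm: at(h) is well defined}) and that $\P$ is a functor. Naturality of $\eta$ gives $\P(\at(f))\circ\eta_M=\eta_N\circ f$, whence $g\circ\eta_M=\eta_N^{-1}\circ\eta_N\circ f=f$. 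For uniqueness, I would use that $\eta_M:M\to\P(\at(M))$ is onto by \cref{lem: MT to Top}, hence an epimorphism in $\MT$, so any two MT-morphisms out of $\P(\at(M))$ that agree after precomposition with $\eta_M$ must be equal.

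Together these show that $\eta$ is the unit of an adjunction whose left adjoint is $\P\circ\at:\MT\to\SMT$ and whose right adjoint is the inclusion $\SMT\hookrightarrow\MT$; hence $\SMT$ is reflective in $\MT$ with reflector $\P\circ\at$. I do not anticipate a real obstacle here, since the substance is already packaged in \cref{thm: dual adj MT and Top} and \cref{thm: SMT dual to Top}; the one place to be careful is the bookkeeping of variance (both $\P$ and $\at$ are contravariant, so $\P\circ\at$ is covariant) and getting the orientation of the naturality square for $\eta$ right, since a slip there would break the existence half of the argument.
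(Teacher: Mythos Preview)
Your proposal is correct and takes essentially the same approach as the paper: both identify $\P\circ\at$ as the reflector with $\eta$ as the unit of the reflection, extracted from the dual adjunction of \cref{thm: dual adj MT and Top} together with the fact that $\eta_N$ is an isomorphism for spatial $N$ (\cref{thm: SMT dual to Top}). The paper merely states this as an immediate consequence, while you spell out the universal property explicitly (existence via $g=\eta_N^{-1}\circ\P(\at(f))$ and naturality of $\eta$; uniqueness via surjectivity of $\eta_M$), which is a welcome elaboration but not a different route.
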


\section{The functor \texorpdfstring{$\O:\MT\to\Frm$}{O : MT -> Frm}}
\label{sec: Functor O}

In this section we show that the functor $\O:\MT\to\Frm$ is essentially surjective, but that it is neither full nor faithful. We also compare the space of atoms of an MT-algebra $M$ to the space of points of the frame $\O(M)$, and show that the restriction of $\O$ to the category of spatial MT-algebras lands in the category of spatial frames.

We start by showing that 
$\O$ is essentially surjective. Let $L\in\Frm$ and let $B(L)$ be the Boolean envelope of $L$. As we saw in the previous section, the inclusion $e:L\to B(L)$ has a right adjoint $\square$ such that $(B(L),\square)$ is an interior algebra and $L$ is (isomorphic to) the fixpoints of $\square$. Since $B(L)$ may not be complete, $(B(L),\square)$ may not be an MT-algebra. Therefore, we take the MacNeille completion $\overline{B(L)}$ of $B(L)$ (see, e.g., \cite[Sec.~XII.3]{DL}). For simplicity, we identify $B(L)$ with its image in $\overline{B(L)}$ and think of it as a subalgebra of $\overline{B(L)}$. Following \cite{MacNeille}, define the lower extension $\sq:\overline{B(L)}\to\overline{B(L)}$ of $\square:B(L)\to B(L)$ by 
\[
\sq x = \bigvee\{\square a\mid a\in B(L) \mbox{ and } a\leq x\}.
\] 
By \cite[Thm.~3.5]{MacNeille}, if $(B,\square)$ is an interior algebra, then so is $\left(\overline{B},\sq\right)$. 
As an immediate consequence, we obtain:

\begin{lemma} \label{lemma: macneille is MT}
$\left(\overline{B(L)},\sq\right) \in \MT$.
\end{lemma}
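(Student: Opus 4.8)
The plan is to assemble the ingredients already recalled in the text, since the statement is an immediate consequence of two standard facts. First I would recall the defining property of the MacNeille completion: for any Boolean algebra $B$, the MacNeille completion $\overline{B}$ is a \emph{complete} Boolean algebra in which $B$ sits as a (dense) subalgebra (see, e.g., \cite[Sec.~XII.3]{DL}). Applying this with $B = B(L)$ shows that $\overline{B(L)}$ is a complete Boolean algebra; in particular, every subset of $\overline{B(L)}$ has a join, so the join defining $\sq x = \bigvee\{\square a \mid a\in B(L) \text{ and } a\leq x\}$ exists for each $x\in\overline{B(L)}$, and hence $\sq$ is a well-defined unary operation on $\overline{B(L)}$.

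Next I would invoke \cite[Thm.~3.5]{MacNeille}: since $(B(L),\square)$ is an interior algebra (as recorded above, $\square$ being the right adjoint of $e:L\to B(L)$), its lower extension $\left(\overline{B(L)},\sq\right)$ is again an interior algebra, i.e.\ $\sq$ satisfies all four of Kuratowski's axioms. Combining the two observations, $\left(\overline{B(L)},\sq\right)$ is an interior algebra whose underlying Boolean algebra is complete, which is exactly the definition of an MT-algebra. Therefore $\left(\overline{B(L)},\sq\right)\in\MT$.

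There is no genuine obstacle here: the entire content of the lemma is packaged in the cited results on MacNeille completions and on the behaviour of interior operators under lower extension. The only point meriting (routine) attention is observing that completeness of $\overline{B(L)}$ is what makes the defining join of $\sq$ legitimate; once that is noted, the conclusion is immediate.
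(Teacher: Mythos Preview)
Your proposal is correct and matches the paper's approach exactly: the paper states the lemma as an immediate consequence of the cited facts (the MacNeille completion being complete and \cite[Thm.~3.5]{MacNeille} ensuring $\sq$ is an interior operator), and you have simply spelled out those two steps.
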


\begin{theorem} \label{thm: essentially surj}
The functor $\O:\MT\to\Frm$ is essentially surjective.

\end{theorem}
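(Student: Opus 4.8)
The plan is to show that every frame $L$ is isomorphic to $\O\left(\overline{B(L)},\sq\right)$, so that the functor $\O$ hits the isomorphism class of $L$. The previous lemma already established that $\left(\overline{B(L)},\sq\right)$ is an MT-algebra, so the only remaining task is to identify its open elements (the fixpoints of $\sq$) with $L$. Recall that under the Boolean envelope construction, $L$ sits inside $B(L)$ as the fixpoints of the right adjoint $\square:B(L)\to B(L)$, and that $B(L)$ in turn sits inside its MacNeille completion $\overline{B(L)}$. So $L\subseteq B(L)\subseteq\overline{B(L)}$, and we must show $\O\left(\overline{B(L)}\right)=L$ as subsets of $\overline{B(L)}$.

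First I would check the inclusion $L\subseteq\O\left(\overline{B(L)}\right)$: given $a\in L$, i.e. $a\in B(L)$ with $\square a=a$, the definition $\sq a=\bigvee\{\square b\mid b\in B(L),\ b\le a\}$ gives $\sq a\ge\square a=a$, while $\sq a\le a$ holds by the interior axioms (Lemma stating $\left(\overline{B},\sq\right)$ is an interior algebra), hence $\sq a=a$ and $a$ is open. For the reverse inclusion $\O\left(\overline{B(L)}\right)\subseteq L$, suppose $x\in\overline{B(L)}$ satisfies $\sq x=x$. By definition $x=\sq x=\bigvee\{\square b\mid b\in B(L),\ b\le x\}$, a join of elements of the form $\square b$; but each such $\square b$ lies in $L$ (it is a fixpoint of $\square$ since $\square\square b=\square b$), and $L=\O(B(L))$ is a frame, hence closed under arbitrary joins computed in $B(L)$. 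The subtlety is that this join is taken in $\overline{B(L)}$, not in $B(L)$, so I must argue that the join of a family of elements of $L$, taken in the larger algebra $\overline{B(L)}$, still lands in $L$. This is where the density of the embedding $B(L)\hookrightarrow\overline{B(L)}$ and the fact that $L$ is join-closed in $B(L)$ come together: the join $\bigvee^{L}$ of the family (which exists since $L$ is a complete lattice, being a frame) is an upper bound in $\overline{B(L)}$, and one checks it is the least such using the MacNeille density — every element of $\overline{B(L)}$ below all upper bounds in $B(L)$ of the set of $\square b$'s must be below $\bigvee^L$. Alternatively, and more cleanly, one can invoke the general fact from the cited MacNeille-completion analysis that $\O\left(\overline{B},\sq\right)$ is the MacNeille completion (equivalently, since $L$ is already complete, just $L$ itself) of $\O(B,\square)$; if the reference \cite{MacNeille} records this, I would simply cite it.

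I expect the main obstacle to be precisely this last point: verifying that joins of open elements computed in the MacNeille completion do not escape $L$, i.e., that the frame structure of $L$ is compatible with the completion. The resolution leans on two facts about MacNeille completions — that $B(L)\hookrightarrow\overline{B(L)}$ is both meet-dense and join-dense, and that a complete sublattice's joins are absolute — together with the explicit formula for $\sq$. Once the fixpoint set of $\sq$ is pinned down to be $L$, the frame isomorphism $L\cong\O\left(\overline{B(L)},\sq\right)$ is the identity on underlying sets, and essential surjectivity of $\O$ follows immediately.
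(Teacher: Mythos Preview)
Your proposal is correct and follows essentially the same route as the paper: show that $\O\left(\overline{B(L)},\sq\right)=L$ by checking both inclusions on fixpoints of $\sq$. The paper handles the ``subtlety'' you flag---that joins of elements of $L$ computed in $\overline{B(L)}$ remain in $L$---by citing Funayama's theorem (via Gr\"atzer) that the embedding $L\hookrightarrow\overline{B(L)}$ is a frame embedding, i.e.\ $L$ is a subframe of $\overline{B(L)}$; this is exactly the clean external fact you were reaching for, and once invoked the reverse inclusion is immediate.
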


\begin{proof}
We must show that for each $ L \in \Frm $ there is $ M \in \MT$ such that $\O(M)$ is isomorphic to $L$.
Let $L\in\Frm$. By 
\cref{lemma: macneille is MT}, $\left(\overline{B(L)},\sq\right) \in \MT$. By \cite[pp.~91--92]{Gratzer}, the embedding $L \hookrightarrow \overline{B(L)}$ is a frame embedding, and we may identify $L$ with a subframe of $\overline{B(L)}$. Therefore, 
it remains to show that $\O\left(\overline{B(L)},\sq\right)=L$. For this it is sufficient to show that for each $x\in \overline{B(L)}$, we have $x\in L$ iff $x=\sq x$. If $x\in L$, then $x\in \overline{B(L)}$, so $x=\square x = \sq x$. Conversely, if $x = \sq x$, then $x = \bigvee\{\square a\mid a\in B(L) \mbox{ and } a\leq x\} \in L$ since $L$ is a subframe of $\overline{B(L)}$. 
\end{proof}

\begin{remark}
That $L \hookrightarrow \overline{B(L)}$ is a frame embedding is part of the well-known Funayama theorem (see \cite[p.~92]{Gratzer} or \cite[p.~274]{BezhGabelaiaJibladze2013}). 
It follows from the proof above that 
$\sq$ is the right adjoint of the embedding $L \hookrightarrow \overline{B(L)}$.
\end{remark}

We next show that the assignment $L \mapsto \overline{B(L)}$ does not induce a functor from $\Frm$ to $\MT$.

\begin{example} \label{exa: not a functor}
    Let $\tau$ be the topology of cofinite subsets of $\mathbb N$ together with $\varnothing$, and let $L$ be the frame of opens of $(\mathbb N,\tau)$.  
    By the proof of \cref{thm: essentially surj}, $L$ is isomorphic to $\O\left(\overline{B(L)}, \sq\right)$. Observe that $B(L)$ is isomorphic to the Boolean algebra of finite and cofinite subsets of $\mathbb N$, and hence $\overline{B(L)}$ is isomorphic to $\P(\mathbb N)$. Let $2=\{0,1\}$ and define a frame homomorphism $h : L \to 2$ by 
\[
	h(U) = \begin{cases}
		1 &\text{if } U \neq \varnothing\\
		0 &\text{if } U = \varnothing
	\end{cases}
\]     Then $B(h) : B(L) \to 2$ satisfies $B(h)(U) = 0$ iff $U$ is finite. Since $\mathbb N$ is the union of its finite subsets, no complete Boolean homomorphism $g : \overline{B(L)} \to 2$ can extend $B(h)$. Since $2$ is isomorphic to $\overline{B(2)}$, we conclude that $h:L\to 2$ cannot be extended to a complete Boolean homomorphism $g:\overline{B(L)}\to\overline{B(2)}$. 
    
\end{example}

\begin{lemma}
The functor $\O$ is neither full nor faithful.
\end{lemma}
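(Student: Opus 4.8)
The statement asserts that $\O\colon\MT\to\Frm$ is neither full nor faithful, so I need two counterexamples (or two instances of one example). For \emph{non-faithfulness} I want two distinct MT-morphisms $h_1,h_2\colon M\to N$ that agree on $\O(M)$; the cleanest way is to keep $\O(M)$ small (ideally trivial, $\O(M)=\{0,1\}$) so that there is essentially no constraint from the open part, yet have room in the Boolean algebra for two different complete homomorphisms compatible with the simple interior operator. For \emph{non-fullness} I want a frame homomorphism $k\colon\O(M)\to\O(N)$ that is not the restriction of any MT-morphism $M\to N$; \cref{exa: not a functor} already exhibits essentially this phenomenon and I would reuse it.

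\textbf{Non-faithfulness.} Take $N$ to be any MT-algebra with a nontrivial automorphism, e.g.\ $N=\P(\{0,1\})$ with the discrete interior operator and $N=\P(\{a,b\})$: the swap $\sigma$ of the two atoms is a complete Boolean automorphism, hence an MT-isomorphism, and $\sigma\neq\id_N$. Now let $M$ be $N$ itself but equipped with the \emph{simple} interior operator $\square_M$ (so $\O(M)=\{0,1\}$). The identity map $N\to M$ on underlying Boolean algebras is a complete Boolean homomorphism, and since $\square_M$ is simple we have $\id(\square_M a)=\square_M a\le\square_N(\id a)$ trivially (indeed $\square_M a=0$ unless $a=1$), so $\id\colon M\to N$ is an MT-morphism; likewise $\sigma\colon M\to N$ is an MT-morphism (it is a complete Boolean iso and $\sigma(\square_M a)=\sigma(\square_M a)\le\square_N\sigma(a)$ since $\square_M a\in\{0,1\}$ and $\sigma$ fixes $0,1$). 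Both $\id$ and $\sigma$ restrict on $\O(M)=\{0,1\}$ to the unique bounded-lattice map $\{0,1\}\to\O(N)$, so $\O(\id)=\O(\sigma)$ while $\id\neq\sigma$. Hence $\O$ is not faithful.

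\textbf{Non-fullness.} I reuse \cref{exa: not a functor}: with $L$ the frame of opens of the cofinite space on $\N$ we have $\O(\overline{B(L)},\sq)\cong L$ and $\overline{B(L)}\cong\P(\N)$, while $\O(\overline{B(2)},\sq)=\O(2)=2$ with $\overline{B(2)}=2$. The frame homomorphism $h\colon L\to 2$ sending $\varnothing\mapsto 0$ and every nonempty cofinite set to $1$ is a morphism $\O(M)\to\O(N)$ for $M=\overline{B(L)}$, $N=2$. If $\O$ were full, $h$ would be $\O(g)$ for some MT-morphism $g\colon\overline{B(L)}\to 2$; but an MT-morphism is in particular a complete Boolean homomorphism, and \cref{exa: not a functor} shows no complete Boolean homomorphism $\P(\N)\to 2$ restricts to $h$ on $L$ (equivalently, extends $B(h)$): since $\N=\bigvee\{\text{finite sets}\}$ and $B(h)$ kills all finite sets, completeness would force $g(\N)=0$, contradicting $g(\N)=h(\N)=1$. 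Hence $\O$ is not full.

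\textbf{Main obstacle.} The only real care-points are bookkeeping: (i) verifying that $\sigma\colon M\to N$ genuinely satisfies the MT-morphism inequality once the source carries the simple operator — this is immediate but must be stated; and (ii) making sure the restriction-of-$\O$ computation is literally the equality of morphisms in $\Frm$, i.e.\ that $\O(M)=\{0,1\}$ really does force $\O(\id)=\O(\sigma)$, which is clear since a bounded lattice map out of $\{0,1\}$ is determined by where it sends $0$ and $1$. For non-fullness the substantive content is entirely inside \cref{exa: not a functor}, so the proof here is just the observation that ``full'' would contradict it; no new computation is needed.
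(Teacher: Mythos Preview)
Your proof is correct and follows essentially the same approach as the paper. For non-fullness you invoke \cref{exa: not a functor} exactly as the paper does; for non-faithfulness the paper uses the four-element Boolean algebra with simple $\square$ on \emph{both} source and target (so $M=N$) and the swap automorphism, whereas you put the discrete operator on the target---a cosmetic variation that changes nothing essential.
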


\begin{proof}	
That $\O$ is not full follows from \cref{exa: not a functor}. Indeed, $\O\left(\overline{B(L)}\right)=L$, $\O\left(\overline{B(2)}\right)=2$, but the frame homomorphism $h:L\to 2$ cannot be extended to an MT-morphism from $\overline{B(L)}$ to $\overline{B(2)}$.

To see that $\O$ is not faithful,	let $B = \{0, a, b, 1\}$ be the four-element Boolean algebra and let $\square : B \to B$ be the simple interior operator on $B$ (see \cref{exa: simple box}). We let $i : B \to B$ be the identity and define $h : B \to B$ by $h(0)=0$, $h(1)=1$, $h(a) = b$, and $h(b) = a$. Then $i$ and $h$ are distinct MT-morphisms. But $\O(i) = \O(h)$ since they coincide on $\O(B)=\{0,1\}$. 
\end{proof}

\begin{remark}
We will see that $\O$  becomes full and/or faithful on several natural subcategories of $\MT$.
\end{remark}

We now turn our attention to the comparison of the space of atoms of an MT-algebra $M$ to the space of points of the frame $\O(M)$. Define $\vartheta : \at(M) \to \pt(\O(M))$ by $\vartheta(x)= \upset x \cap \O(M)$. It is elementary to see that $\vartheta$ is well defined.

\begin{proposition}\label{continuous}
The map $\vartheta$ is continuous.
\end{proposition}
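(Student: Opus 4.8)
The plan is to verify continuity directly at the level of the defining (sub)bases of the two topologies. Recall that the topology on $\at(M)$ is $\tau = \eta[\O(M)]$, so its open sets are exactly the sets $\eta(u) = \{x \in \at(M) \mid x \le u\}$ for $u \in \O(M)$, and that the topology on $\pt(\O(M))$ is $\zeta[\O(M)]$, with $\zeta(a) = \{p \in \pt(\O(M)) \mid a \in p\}$ for $a \in \O(M)$. Hence it suffices to show that $\vartheta^{-1}(\zeta(a))$ is open in $(\at(M),\tau)$ for each $a \in \O(M)$.

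The key computation is to unwind the definitions: for $x \in \at(M)$ and $a \in \O(M)$,
\[
x \in \vartheta^{-1}(\zeta(a)) \iff a \in \vartheta(x) = {\upset x} \cap \O(M) \iff x \le a \iff x \in \eta(a).
\]
(The middle equivalence uses that $a$ is already an open element, so membership in $\vartheta(x)$ reduces to $x \le a$.) Therefore $\vartheta^{-1}(\zeta(a)) = \eta(a)$, which belongs to $\tau$ by definition of the topology on $\at(M)$. Since the sets $\zeta(a)$ form a base (indeed generate the topology) of $\pt(\O(M))$, this proves that $\vartheta$ is continuous.

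I do not anticipate a real obstacle here: the statement is essentially a bookkeeping exercise once the two topologies are written out explicitly, and the identity $\vartheta^{-1}(\zeta(a)) = \eta(a)$ does all the work. The only point requiring a moment's care is to make sure we are using the correct description of $\tau$ (namely $\eta[\O(M)]$, as established in \cref{lem: MT to Top}) rather than some finer topology, and to note that $\zeta[\O(M)]$ is literally the topology on $\pt(\O(M))$, so checking preimages of the sets $\zeta(a)$ is enough.
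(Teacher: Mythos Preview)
Your proof is correct and follows essentially the same approach as the paper: both establish continuity by showing $\vartheta^{-1}(\zeta(a)) = \eta(a)$ for each $a \in \O(M)$ via the same chain of equivalences. The paper's version is simply terser, omitting the explanatory remarks about why checking these basic opens suffices.
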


\begin{proof}
It is sufficient to show that for each $a \in \O(M)$ we have $\vartheta ^{-1}(\zeta(a))=\eta(a)$. Indeed, for $x\in\at(M)$,\[
x \in \vartheta ^{-1}(\zeta(a)) \iff \vartheta(x) \in \zeta(a) \iff a \in \vartheta(x) \iff 
x \leq a \iff x \in \eta(a).
\]
Thus, $\vartheta$ is continuous.
\end{proof}

In general, $\vartheta$ is neither one-to-one nor onto.

\begin{example}\label{theta}
$ $

\begin{enumerate}
    \item To see that $\vartheta$ is not one-to-one, let $X$ be a set with the trivial topology. Then $\at(\P(X))$ is homeomorphic to $X$. On the other hand, $\Omega(X)=\{\varnothing , X\}$, and hence $\pt(\Omega(X))$ is a singleton. Thus, if $X$ has more than one point, then $\vartheta$ can't be one-to-one.
    \item To see that $\vartheta$ is not onto, let $M$ be a complete atomless Boolean algebra and let $\square : M \to M$ be the simple interior operator on $M$ (see \cref{exa: simple box}). Then $M$ is an MT-algebra such that $\at(M)=\varnothing$. On the other hand, $\O(M) = \{0,1\}$, and hence $\pt(\O(M))$ is a singleton. Thus, $\vartheta$ can't be onto. 
\end{enumerate} 
\end{example}

\begin{proposition}\label{one-one}
Let $M\in\MT$. Then $\vartheta$ is one-to-one iff $\at(M)$ is a $T_0$-space.
\end{proposition}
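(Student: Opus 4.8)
The statement will follow by unwinding both sides into the same condition about separating atoms by open elements. The key observations are that the opens of $(\at(M),\tau)$ are exactly the sets $\eta(a)$ with $a\in\O(M)$ (by construction $\tau=\eta[\O(M)]$, and $\eta(a)=\{x\in\at(M)\mid x\le a\}$), and that for $x,y\in\at(M)$ the equality $\vartheta(x)=\vartheta(y)$ of points of $\pt(\O(M))$ is, by definition of $\vartheta$, literally the set equality $\upset x\cap\O(M)=\upset y\cap\O(M)$, i.e.\ the condition that $x\le a\iff y\le a$ for every $a\in\O(M)$.

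\textbf{Main steps.} First I would reformulate injectivity of $\vartheta$: it is injective iff for all $x,y\in\at(M)$, whenever $x\le a\iff y\le a$ holds for every $a\in\O(M)$, then $x=y$. Contrapositively, $\vartheta$ is injective iff for every pair of distinct atoms $x,y$ there is some $a\in\O(M)$ with exactly one of $x\le a$, $y\le a$ holding. Next I would reformulate $T_0$-ness of $\at(M)$: since the opens of $\at(M)$ are precisely the $\eta(a)$, $a\in\O(M)$, the space $\at(M)$ is $T_0$ iff for every pair of distinct atoms $x,y$ there is $a\in\O(M)$ with $x\in\eta(a)$, $y\notin\eta(a)$ or $y\in\eta(a)$, $x\notin\eta(a)$ — that is, with exactly one of $x\le a$, $y\le a$ holding. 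These two reformulations coincide verbatim, which gives the equivalence. (If desired, the identity $\vartheta^{-1}(\zeta(a))=\eta(a)$ from \cref{continuous} packages the comparison of the two open-set lattices, and the fact that $\pt(\O(M))$ is always a $T_0$-space can be invoked, but neither is strictly necessary for this short argument.)

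\textbf{Expected obstacle.} There is essentially no technical obstacle here; the proof is a definitional chase. The only points requiring a little care are (i) correctly identifying the open sets of $\at(M)$ as the family $\{\eta(a)\mid a\in\O(M)\}$, so that the $T_0$ condition is phrased in terms of open \emph{elements} of $M$ rather than arbitrary opens, and (ii) keeping the quantifier structure of $T_0$ straight, namely that one only needs to separate $x$ from $y$ in \emph{one} of the two directions, which is exactly what the failure of injectivity of $\vartheta$ rules out.
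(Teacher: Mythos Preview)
Your proposal is correct and follows essentially the same approach as the paper: both arguments hinge on the observation that, for $x\in\at(M)$ and $a\in\O(M)$, one has $a\in\vartheta(x)\iff x\le a\iff x\in\eta(a)$, and then unwind both injectivity of $\vartheta$ and the $T_0$-property of $\at(M)$ into the same separation condition on atoms by open elements.
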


\begin{proof}
We first observe that  for $x\in\at(M)$ and $a\in\O(M)$ we have  
\begin{equation}
a\in\vartheta(x) \iff x\leq a \iff x\in\eta (a). \label{varthetaandeta} \tag{\dag}
\end{equation}
Now, suppose that $\vartheta$ is one-to-one. Let $x,y\in\at(M)$ with $x \neq y$. Then $\vartheta(x) \neq \vartheta(y)$. Without loss of generality we may assume that $\vartheta(x) \not\subseteq \vartheta(y)$, so there is $a \in \vartheta(x)$ such that $a \not \in \vartheta(y)$. Therefore, (\ref{varthetaandeta}) implies that $x \in \eta(a)$ and $y \not \in \eta(a)$. Thus, $\at(M)$ is a $T_0$-space.

Conversely, suppose that $\at(M)$ is a $T_0$-space and $x,y \in \at(M)$ with $x\ne y$. Without loss of generality we may assume that there is $a\in\O(M)$ such that $x \in \eta(a)$ and $y \not \in \eta(a)$. Therefore, (\ref{varthetaandeta}) implies that $a \in \vartheta(x)$ and $a \not \in \vartheta(y)$. Thus, $\vartheta$ is one-to-one.
\end{proof}

In the next section we characterize when $\vartheta$ is onto and when $\vartheta$ is a homeomorphism. 
We conclude this section by showing that the restriction of $\O$ to $\SMT$ lands in $\SFrm$.

\begin{proposition}\label{SMT implies SFrm}
If $M\in\SMT$, then $\O(M)\in\SFrm$.
\end{proposition}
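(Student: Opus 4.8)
The plan is to exploit atomicity of $M$ to manufacture points of $\O(M)$ out of atoms of $M$, using the map $\vartheta:\at(M)\to\pt(\O(M))$, $\vartheta(x)=\upset x\cap\O(M)$, from \cref{continuous}. The slogan is: every atom of $M$ restricts to a completely prime filter of $\O(M)$, and atomicity guarantees enough atoms to separate elements of $\O(M)$.

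Concretely, recall that $\O(M)$ is spatial iff whenever $a\not\le b$ in $\O(M)$ there is $y\in\pt(\O(M))$ with $a\in y$ and $b\notin y$. So I would fix $a,b\in\O(M)$ with $a\not\le b$ and produce such a $y$. Working in the complete Boolean algebra $M$, $a\not\le b$ gives $a\wedge\neg b\ne 0$, and since $M$ is atomic there is an atom $x\le a\wedge\neg b$; in particular $x\le a$ and $x\not\le b$. Set $y:=\vartheta(x)=\upset x\cap\O(M)$, which is a point of $\O(M)$ by the well-definedness of $\vartheta$ (noted just before \cref{continuous}). By the chain of equivalences in the proof of \cref{continuous}, for every $c\in\O(M)$ one has $c\in\vartheta(x)\iff x\le c$; hence $a\in y$ and $b\notin y$, as required. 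This shows $\O(M)\in\SFrm$.

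There is essentially no obstacle here; the only mild subtleties are (i) deducing $x\not\le b$ — not merely $x\ne b$ — from $x\le\neg b$, and (ii) invoking that $\vartheta(x)$ is genuinely a completely prime filter, which is precisely the well-definedness of $\vartheta$ already granted. As an alternative one could argue indirectly: spatiality of $\O(M)$ amounts to injectivity of the map $a\mapsto\zeta(a)$ on $\O(M)$; since $\vartheta^{-1}(\zeta(a))=\eta(a)$ for $a\in\O(M)$ by \cref{continuous}, and $\eta$ is injective on $M$ because $M$ is atomic (\cref{preserves interior}), injectivity of $\zeta$ on $\O(M)$ follows. But the direct atom-chasing proof is shorter and more transparent, so I would present that one.
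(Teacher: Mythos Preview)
Your proof is correct and follows essentially the same approach as the paper: fix $a\not\le b$ in $\O(M)$, use atomicity to find an atom $x\le a$ with $x\not\le b$, and take $y=\vartheta(x)$ as the separating point. The paper phrases the conclusion as $\zeta(a)\not\subseteq\zeta(b)$, but this is just a cosmetic repackaging of your $a\in y$, $b\notin y$.
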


\begin{proof}
Suppose that $M$ is a spatial MT-algebra. Let $a,b \in \O(M)$ with $a \nleq b$. Since $M$ is atomic, there is $x\in\at(M)$ such that $x \leq a$ but $x\nleq b$. Therefore, $a \in \vartheta(x)$ but $b \notin \vartheta(x)$. Thus, $\vartheta(x) \in \zeta(a)$ but $\vartheta(x)\notin\zeta(b)$. Consequently, $\zeta(a)\not\subseteq \zeta(b)$, and so $\O(M)$ is a spatial frame.
\end{proof}

\section{\texorpdfstring{$T_0$}{T0} and sober MT-algebras} \label{sec:5}

In this section we start developing the separation axioms for MT-algebras that generalize the corresponding separation axioms for topological spaces. We begin by introducing $T_0$, weakly sober, and sober MT-algebras. We show that the restriction of $\O:\MT\to\Frm$ to the category of $T_0$-algebras is faithful, and characterize when $\vartheta:\at(M)\to\pt(\O(M))$ is onto and a homeomorphism. From this we derive that the restriction of $\O$ is an equivalence between the categories $\SMT_{\mathrm{Sob}}$ of spatial sober MT-algebras and $\SFrm$ of spatial frames. We also show that the dual adjunction and dual equivalence between $\Top$, $\MT$, and $\SMT$ restricts to the categories of $T_0$ and sober MT-algebras.

We start by studying the $T_0$ separation in MT-algebras. Let $X$ be a topological space. We recall that $A\subseteq X$ is {\em saturated} if $A$ is an intersection of open sets, and that $A$ is {\em locally closed} if it is an intersection of an open set and a closed set. These have an obvious generalization to MT-algebras:

\begin{definition}\label{def:WLC}
Let $M\in\MT$. 
\begin{enumerate}
\item An element $a\in M$ is \emph{saturated} if $a$ is a meet of open elements.
Let $\S(M)$ be the collection of saturated elements of $M$. 
\item Dually, call $a\in M$ \emph{co-saturated} if $a$ is a join of closed elements.
Let $\CS(M)$ be the collection of co-saturated elements of $M$. 
\item An element $a \in M$ is \emph{locally closed} if it is a meet of an open element and a closed element.
Let $\LC(M)$ be the collection of locally closed elements of $M$.
\item Dually, call $a \in M$ \emph{co-locally closed} if  
it is a join of a closed element and an open element. Let $\LO(M)$ be the collection of co-locally closed elements of $M$.
\item An element $a \in M$ is \emph{weakly locally closed} if 
it is a meet of a saturated element and a closed element. Let $\WLC(M)$ be the collection of weakly locally closed elements of $M$.
\item Dually, call $a \in M$ \emph{weakly co-locally closed} if 
it is a join of a co-saturated element and an open element. Let $\WLO(M)$ be the collection of weakly co-locally closed elements of $M$.
\end{enumerate}.
\end{definition}

We recall that a subset $S$ of a complete lattice $L$ \emph{join-generates} 
$L$ if each element of $L$ is a join of elements from $S$. Dually, $S$ \emph{meet-generates} $L$
if each element of $L$ is a meet of elements from $S$.

\begin{definition}\label{def:T0}
We call $M\in\MT$ a {\em $T_0$-algebra} if $\WLC(M)$ join-generates $M$. 
\end{definition}

\begin{remark}
Equivalently, $M\in\MT$ is a $T_0$-algebra provided $\WLO(M)$ meet-generates $M$.
\end{remark}

Let $\MT_0$ be the full subcategory of \MT consisting of $T_0$-algebras. Let also $\Top_0$ be the full subcategory of $\Top$ consisting of $T_0$-spaces.

\begin{lemma} \label{lem: Top0}
    Let $X\in \Top$ and $M \in \MT$. 
    \begin{enumerate}[ref=\thelemma(\arabic*)]
    		\item $X\in\Top_0$ iff $(\P(X),\int)\in\MT_0$. \label[lemma]{lem: char T0}
    		\item $M\in\MT_0$ implies $\at(M)\in\Top_0$. \label[lemma]{T_0}
    \end{enumerate}
\end{lemma}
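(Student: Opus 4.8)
The plan is to prove both parts by the same device: in an MT-algebra (or in $(\P(X),\int)$) where $\WLC$ join-generates, every atom (resp.\ every singleton) is automatically weakly locally closed, since it is a join of members of $\WLC$ lying below it, each of which is either $0$ or the atom itself, so at least one of them must be the atom.

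For part (1), I would first record the standard fact that, writing $U_x=\bigcap\{U\in\Omega(X)\mid x\in U\}$ for the saturation of $\{x\}$ and $\overline{\{x\}}$ for its closure, a point $y$ lies in $U_x\cap\overline{\{x\}}$ exactly when $x$ and $y$ have the same open neighborhoods; hence $X\in\Top_0$ iff $U_x\cap\overline{\{x\}}=\{x\}$ for all $x$. Then, if $X\in\Top_0$, each $\{x\}=U_x\cap\overline{\{x\}}$ is a meet of a saturated set and a closed set, so lies in $\WLC(\P(X))$; since every subset of $X$ is a union of singletons, $\WLC(\P(X))$ join-generates $\P(X)$, i.e.\ $(\P(X),\int)\in\MT_0$. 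Conversely, if $(\P(X),\int)\in\MT_0$ then $\{x\}$ is a join of members of $\WLC(\P(X))$ below it, so $\{x\}$ itself is weakly locally closed, say $\{x\}=S\cap C$ with $S$ saturated and $C$ closed; from $x\in S$ and $x\in C$ one gets $U_x\subseteq S$ and $\overline{\{x\}}\subseteq C$, whence $U_x\cap\overline{\{x\}}\subseteq S\cap C=\{x\}$, so $X$ is $T_0$.

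For part (2), let $M\in\MT_0$ and let $x\neq y$ be atoms; recall $\at(M)$ carries the topology $\tau=\eta[\O(M)]$ with $\eta(a)=\{z\in\at(M)\mid z\le a\}$. As above, $x\in\WLC(M)$, so $x=s\wedge c$ with $s=\bigwedge T$ a meet of open elements and $c\in\C(M)$. Since $y$ is an atom with $y\neq x$ we have $y\not\le x$, hence $y\not\le s$ or $y\not\le c$. If $y\not\le s$, choose $u\in T$ with $y\not\le u$; since $x\le s\le u$, the open set $\eta(u)\in\tau$ contains $x$ but not $y$. If $y\not\le c$, then $\neg c\in\O(M)$ (as $c$ is closed), $x\le c$ gives $x\wedge\neg c=0$, so $x\notin\eta(\neg c)$, while $y$ an atom with $y\not\le c$ gives $y\le\neg c$, so $y\in\eta(\neg c)\in\tau$. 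Either way $x$ and $y$ are separated by an open set of $\at(M)$, so $\at(M)\in\Top_0$.

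I do not expect a genuine obstacle. The one place to be careful is in part (2): an atom below a meet of open elements need not itself be open, so the closed factor $c$ cannot simply be discarded, and in the case $y\not\le c$ the \emph{open} separating set has to be produced as the complement $\neg c$ of the closed factor rather than from $c$ itself. (If preferred, part (2) can instead be derived from \cref{one-one}: the same computation shows $\vartheta$ is injective, since $\vartheta(x)=\vartheta(y)$ forces $y\le u$ for every $u\in T$ and, using $\neg c\in\O(M)$, also $y\le c$, whence $y\le x$ and $y=x$.)
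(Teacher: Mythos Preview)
Your proposal is correct and follows essentially the same approach as the paper: both proofs reduce part (1) to the characterization $X\in\Top_0$ iff $\{x\}=U_x\cap\overline{\{x\}}$ for each $x$, and both prove part (2) by noting that an atom of a $T_0$-algebra must itself lie in $\WLC(M)$, then separating distinct atoms via either an open element in the saturated factor or the complement of the closed factor. Your write-up simply spells out the converse in (1) more explicitly than the paper does.
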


\begin{proof}
(1) Observe that $X$ is a $T_0$-space iff $\{x\} = \overline{\{x\}} \cap \bigcap \{U \in \Omega(X) \mid x \in U\}$ for each $x\in X$. Therefore, since each $A \subseteq X$ is a union of singletons, we obtain that $X\in\Top_0$ iff $ (\P(X),\int)\in\MT_0$.

(2) Let $x,y \in \at(M)$ be distinct. Since $M\in\MT_0$, we must have $x \in \WLC(M)$. Therefore, $x = \bigwedge S \wedge b$, where $S \subseteq \O(M)$ and $b \in \C(M)$. 
    Since distinct atoms are non-comparable, we have $y \nleq x$, so $y\nleq a$ for some $a\in S$ or $y\nleq b$. Thus, $x \in \eta(a)$ and $y \notin \eta(a)$, or else $\neg b \in \O(M)$ with $y \le \neg b$ and $x\not\le \neg b$, yielding $y\in \eta(\neg b)$ and $x \not \in \eta(\neg b)$. Consequently, $\at(M) \in \Top_0$.
\end{proof}

\begin{remark} \label{rem: connection to Raney}
	Every element $a$ of a $T_0$-algebra $M$ can be expressed as $a = \bigvee_{i} (\bigwedge S_i \wedge \neg u_i)$ for some $S_i \subseteq \O(M)$ and $u_i \in \O(M)$. As a consequence, $\O(M)$ generates $M$ as a complete Boolean algebra. If $M$ is spatial, then $M$ is completely distributive (see, e.g., \cite [p.~123]{HalmosBA}), so the converse is also true. Thus, a spatial MT-algebra $M$ is a $T_0$-algebra iff $\O(M)$ generates $M$ as a complete Boolean algebra (cf. \cite[p.~964]{RaneyAlgebra}).
\end{remark}

\begin{example}\label{nonspatial}
For trivial reasons, there exist non-spatial $T_0$-algebras. Indeed, each atomless MT-algebra $M$ in which $\square$ is identity is such. This example  serves as a generic example of a non-spatial MT-algebra that satisfies all the separation axioms that will be considered in this paper.  
\end{example}

Let $\SMT_0 = \SMT\cap\MT_0$. Combining \cref{thm: dual adj MT and Top,thm: SMT dual to Top,lem: Top0}, we obtain:

\begin{theorem} \label{thm: MT0 and Top0}
    The dual adjunction between $\Top$ and $\MT$ restricts to a dual adjunction between $\Top_0$ and $\MT_0$, and the dual equivalence between $\Top$ and $\SMT$ restricts to a dual equivalence between $\Top_0$ and $\SMT_0$.
\end{theorem}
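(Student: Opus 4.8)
The plan is to assemble \cref{thm: MT0 and Top0} from pieces that are already in place, treating it purely as a matter of checking that the relevant functors and natural transformations restrict appropriately. By \cref{thm: dual adj MT and Top} we already have a dual adjunction $(\P,\at)$ between $\Top$ and $\MT$ with units $\varepsilon$ and $\eta$, and by \cref{thm: SMT dual to Top} this restricts to a dual equivalence between $\Top$ and $\SMT$. So the only new content is that everything is compatible with the full subcategories $\Top_0 \le \Top$, $\MT_0 \le \MT$, and $\SMT_0 = \SMT \cap \MT_0$. Since all these are \emph{full} subcategories, no condition on morphisms needs checking; the entire argument reduces to object-level closure statements.

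First I would record the two closure facts supplied by \cref{lem: Top0}: for $X \in \Top$ we have $X \in \Top_0$ iff $\P(X) = (\P(X),\int) \in \MT_0$ (\cref{lem: char T0}), and for $M \in \MT$, if $M \in \MT_0$ then $\at(M) \in \Top_0$ (\cref{T_0}). From the first of these, $\P$ restricted to $\Top_0$ lands in $\MT_0$ (and in fact, since the condition is an iff, $\P^{-1}(\MT_0) = \Top_0$). From the second, $\at$ restricted to $\MT_0$ lands in $\Top_0$. Hence both functors of the adjunction restrict to functors between $\Top_0$ and $\MT_0$. The unit $\eta : 1_{\MT} \to \P \circ \at$ then restricts to a natural transformation $1_{\MT_0} \to \P\circ\at$ on $\MT_0$ (its components $\eta_M$ for $M \in \MT_0$ are already MT-morphisms by \cref{lem: MT to Top}), and likewise $\varepsilon : 1_{\Top} \to \at\circ\P$ restricts to $\Top_0$; the triangle identities, being equalities of morphisms, are inherited from the ambient categories. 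This gives the dual adjunction between $\Top_0$ and $\MT_0$.

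For the equivalence, I would intersect with spatiality. If $X \in \Top_0$, then $\varepsilon_X : X \to \at(\P(X))$ is a homeomorphism by \cref{lem: epsilon homo}, so in particular it is an iso in $\Top_0$; and $\P(X) \in \SMT$ since $\P(X)$ is atomic, while $\P(X) \in \MT_0$ by \cref{lem: char T0}, so $\P(X) \in \SMT_0$. Conversely, if $M \in \SMT_0$, then $M$ is atomic, so $\eta_M : M \to \P(\at(M))$ is an MT-isomorphism by \cref{preserves interior}(1) (equivalently \cref{thm: SMT dual to Top}), and $\at(M) \in \Top_0$ by \cref{T_0}. Thus the components of the units are isomorphisms on the respective subcategories, and the restriction of $\at$ to $\SMT_0$ lands in $\Top_0$ while $\P$ restricted to $\Top_0$ lands in $\SMT_0$. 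Therefore the dual adjunction restricts to a dual equivalence between $\Top_0$ and $\SMT_0$, completing the proof.

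There is no real obstacle here; the statement is a formal consequence of \cref{thm: dual adj MT and Top,thm: SMT dual to Top,lem: Top0}, and the only thing to be careful about is bookkeeping: making sure that (i) both functors genuinely restrict (this is exactly \cref{lem: Top0}), (ii) the unit components at objects of the subcategories are morphisms \emph{in} those subcategories (automatic, since the subcategories are full), and (iii) for the equivalence, the relevant unit components are isomorphisms, which is where atomicity/spatiality enters via \cref{lem: epsilon homo} and \cref{preserves interior}. If one wanted to be maximally terse, the whole proof is essentially the single sentence ``Combine \cref{thm: dual adj MT and Top,thm: SMT dual to Top} with \cref{lem: Top0}," but I would spell out the three bookkeeping points above for clarity.
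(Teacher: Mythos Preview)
Your proposal is correct and matches the paper's approach exactly: the paper states the theorem as an immediate consequence of \cref{thm: dual adj MT and Top,thm: SMT dual to Top,lem: Top0}, which is precisely the ``maximally terse'' sentence you end with. Your additional bookkeeping (functors restrict, units restrict because the subcategories are full, unit components are isos on the spatial side) is sound and just makes explicit what the paper leaves implicit.
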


We show that the restriction of the functor $\O$ to $\MT_0$ is faithful.

\begin{theorem} \label{thm: O faithful for T0}
    The restriction $\O:\MT_0\to\Frm$ is faithful.
\end{theorem}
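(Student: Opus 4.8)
The plan is to show that two MT-morphisms $h_1, h_2 : M \to N$ between $T_0$-algebras that agree on open elements must agree everywhere. The key is \cref{rem: connection to Raney}: every element $a \in M$ can be written as $a = \bigvee_i (\bigwedge S_i \wedge \neg u_i)$ with $S_i \subseteq \O(M)$ and $u_i \in \O(M)$. Since $h_1, h_2$ are complete Boolean homomorphisms, they preserve arbitrary joins, arbitrary meets, and complementation; hence each is determined on all of $M$ by its values on $\O(M)$. Concretely, if $h_1|_{\O(M)} = h_2|_{\O(M)}$, then for each index $i$ we have $h_1(\bigwedge S_i) = \bigwedge h_1[S_i] = \bigwedge h_2[S_i] = h_2(\bigwedge S_i)$ and $h_1(\neg u_i) = \neg h_1(u_i) = \neg h_2(u_i) = h_2(\neg u_i)$, so $h_1$ and $h_2$ agree on each meet $\bigwedge S_i \wedge \neg u_i$, and taking the join over $i$ gives $h_1(a) = h_2(a)$.

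The steps, in order, are: (1) fix $h_1, h_2 : M \to N$ in $\MT_0$ with $\O(h_1) = \O(h_2)$, i.e. $h_1(u) = h_2(u)$ for all $u \in \O(M)$; (2) invoke \cref{rem: connection to Raney} to write an arbitrary $a \in M$ in the displayed Raney-type form; (3) use that complete Boolean homomorphisms preserve $\bigvee$, $\bigwedge$, and $\neg$ to propagate the agreement on $\O(M)$ through meets, complements, and the outer join; (4) conclude $h_1(a) = h_2(a)$ for all $a$, hence $h_1 = h_2$, so $\O$ is faithful on $\MT_0$.

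The only substantive ingredient is the representation $a = \bigvee_i (\bigwedge S_i \wedge \neg u_i)$, which follows from $M$ being a $T_0$-algebra: by definition $\WLC(M)$ join-generates $M$, and each weakly locally closed element is $\bigwedge S \wedge b$ with $S \subseteq \O(M)$ and $b \in \C(M)$, so $b = \neg u$ for some $u \in \O(M)$. Thus there is essentially no obstacle here — the statement is a clean corollary of \cref{rem: connection to Raney} together with the fact that MT-morphisms are complete Boolean homomorphisms. The one point to state carefully is that preservation of arbitrary meets (not just finite ones) is needed to handle $\bigwedge S_i$, and this is exactly part of being a complete Boolean homomorphism. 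I would keep the proof to a few lines, citing \cref{rem: connection to Raney} for the representation and the definition of MT-morphism for the preservation properties.
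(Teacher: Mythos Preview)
Your proposal is correct and follows essentially the same argument as the paper: write an arbitrary $a\in M$ as a join of weakly locally closed elements $\bigwedge S_i \wedge \neg u_i$ with $S_i\subseteq\O(M)$, $u_i\in\O(M)$, and then use that MT-morphisms are complete Boolean homomorphisms (hence preserve $\bigvee$, $\bigwedge$, and $\neg$) to conclude that agreement on $\O(M)$ forces agreement everywhere. The only cosmetic difference is that the paper unpacks the representation directly from the definition of $T_0$ rather than citing \cref{rem: connection to Raney}, and it notes along the way that only $M$ (not $N$) need be $T_0$.
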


\begin{proof}
    Let $h,g :M\to N$ be MT-morphisms between MT-algebras. Suppose that $h|_{\O(M)}= g|_{\O(M)}$
    and $a \in M$. If $M\in\MT_0$, then $a=\bigvee T$, where for each $t\in T$ we have $t=\bigwedge S_t \wedge b_t$ with $S_t\subseteq\O(M)$ and $b_t\in\C(M)$. Therefore, writing $b_t = \neg c_t$ for some $c_t \in \O(M)$, we obtain
    \begin{eqnarray*}
    h(a) &=& h\left(\bigvee T\right) = \bigvee \{ h(t) \mid t\in T \} = \bigvee \left\{ h\left(\bigwedge S_t \wedge \neg c_t\right) \mid t \in T \right\} \\
    &=& \bigvee \left\{ \bigwedge \{ h(a) \mid a \in S_t \} \wedge \neg h(c_t) \mid t \in T \right\} = \bigvee \left\{ \bigwedge \{ g(a) \mid a \in S_t \} \wedge \neg g(c_t) \mid t\in T \right\} \\
    &=& \bigvee \left\{ g\left(\bigwedge S_t \wedge \neg c_t\right) \mid t\in T \right\} = \bigvee \{ g(t) \mid t \in T \} = g\left(\bigvee T\right) = g(a).
    \end{eqnarray*} 
    Thus, $h=g$, and hence the restriction of $\O$ to $\MT_0$ is faithful.
\end{proof}

We next turn our attention to sober MT-algebras. We recall that an element $a\ne 0$ of a lattice $L$ is \emph{join-irreducible} if it is not the join of two strictly smaller elements. Dually, $a\ne 1$ is \emph{meet-irreducible} if it is not the meet of two strictly bigger elements.

\begin{definition}
We call an MT-algebra $M$ {\em weakly sober} 
if for each join-irreducible element $p$ of $\C(M) $ there is $x \in \at(M)$ such that $p = \diamond x$. 
\end{definition} 

\begin{remark}
Equivalently, an MT-algebra $M$ is weakly sober provided for each meet-irreducible element $m$ of $\O(M) $ there is $x \in \at(M)$ such that $m = \neg\diamond x$.

\end{remark}

\begin{definition} \label{MTsob}
We call an MT-algebra $M$ {\em sober} 
if $M$ is a weakly sober $T_0$-algebra. 
\end{definition}

\begin{lemma}\label{unique}
If $M$ is sober, then for each join-irreducible $p\in\C(M)$ there is a unique $x\in\at(M)$ such that $p = \diamond x$
\end{lemma}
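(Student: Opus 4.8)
The plan is to establish existence of such an $x$ from weak soberness, and then use the $T_0$ property (in the form of \cref{one-one}) to get uniqueness. Let $p \in \C(M)$ be join-irreducible. Since $M$ is sober, it is weakly sober, so there is at least one $x \in \at(M)$ with $p = \diamond x$; this is exactly the existence half. It remains to show such an $x$ is unique.

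For uniqueness, suppose $x, y \in \at(M)$ both satisfy $\diamond x = p = \diamond y$. Since $M$ is a $T_0$-algebra, by \cref{one-one} the map $\vartheta : \at(M) \to \pt(\O(M))$ is one-to-one, so it suffices to show $\vartheta(x) = \vartheta(y)$, i.e. that for each $a \in \O(M)$ we have $x \le a$ iff $y \le a$. First I would translate the condition $\diamond x = \diamond y$ using the adjunction from \cref{square and diamond as adjoints-2}: $\diamond$ is left adjoint to the inclusion $\C(M) \hookrightarrow M$, so for $c \in \C(M)$ we have $x \le c$ iff $\diamond x \le c$, and likewise for $y$. Hence $x$ and $y$ lie below exactly the same closed elements. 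Now for $a \in \O(M)$, the complement $\neg a$ is closed, and $x \le a$ iff $x \not\le \neg a$ (since $x$ is an atom and $x \le a \wedge \neg a$ would force $x = 0$) iff $y \not\le \neg a$ iff $y \le a$. Therefore $\vartheta(x) = \vartheta(y)$, and injectivity of $\vartheta$ gives $x = y$.

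Alternatively, and perhaps more cleanly, one can bypass $\vartheta$ entirely: from $\diamond x = \diamond y$ and the adjunction, $x$ and $y$ lie below the same closed elements; since $M \in \MT_0$ and an atom is weakly locally closed, write $x = \bigwedge S \wedge b$ with $S \subseteq \O(M)$ and $b \in \C(M)$, note $x \le b$ forces $y \le b$, and $x \le a$ for each $a \in S$; but then one still needs that $x \le a$ (open) transfers to $y$, which is again the complement argument above. Either way the crux is the same simple observation.

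I do not expect a serious obstacle here: the only mildly delicate point is making precise that $\diamond x = \diamond y$ implies $x$ and $y$ sit below the same closed elements, which is immediate from the adjoint characterization of $\diamond$ in \cref{square and diamond as adjoints}, and then passing from closed elements to open elements via complementation and the atomicity of $x, y$. The role of the $T_0$ hypothesis (i.e. of \cref{one-one}) is precisely to guarantee that distinct atoms are separated by open elements, which is what the argument exploits.
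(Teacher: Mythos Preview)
Your argument is correct and follows essentially the same route as the paper: both proofs use the adjunction $\diamond \dashv (\C(M) \hookrightarrow M)$ to deduce that $x$ and $y$ lie below the same closed elements, and then invoke the $T_0$-ness of $\at(M)$ (via \cref{T_0}) to conclude $x = y$. The only cosmetic difference is that the paper phrases the last step topologically, computing $\eta(\diamond x) = \overline{\{x\}}$ and using that a $T_0$-space has at most one point with a given closure, whereas you phrase it algebraically via injectivity of $\vartheta$. One small wording issue: when you write ``Since $M$ is a $T_0$-algebra, by \cref{one-one} the map $\vartheta$ is one-to-one,'' you are silently using \cref{T_0} to pass from $M \in \MT_0$ to $\at(M) \in \Top_0$; it would be cleaner to cite both.
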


\begin{proof}
Let $q\in\C(M)$ be join-irreducible. Since $M$ is weakly sober, $p=\diamond x$ for some $x\in\at(M)$. To see that $x$ is unique, let $\diamond x=\diamond y$ for some $y\in\at(M)$. Then $\eta(\diamond x) = \eta(\diamond y)$. By \cref{square and diamond as adjoints-2}, 
\begin{equation}
\eta(\diamond x) = \eta\left(\bigwedge \{c \in \C(M) \mid x \leq c \}\right) = \bigcap \{\eta(c) \mid x \leq c \in \C(M) \}= \overline{\{x\}}. 
\label{uniqueness} \tag{\dag \dag}
\end{equation}
Similarly, $\eta(\diamond y) = \overline{\{y\}}$. Thus, $\overline{\{x\}} = \overline{\{y\}}$, and hence $x=y$ since $\at(M)$ is a $T_0$-space by \cref{T_0}. 
\end{proof}

\begin{remark} \label{rem: spatial sober}
\hfill
\begin{enumerate}[ref=\theremark(\arabic*)]
\item \cref{unique} can equivalently be formulated as follows: If $M$ is sober, then for each meet-irreducible $m\in\O(M)$ there is a unique $x\in\at(M)$ such that $m = \neg\diamond x$. 

\item If $M$ is spatial, the converse of \cref{unique} is also true. For this it is sufficient to see that $M$ is a $T_0$-algebra. Let $a \neq 0$. Since $M$ is spatial,  there is an atom $x \leq a$. We show that 
\[
x = \diamond x \wedge \bigwedge \{ \neg \diamond y \mid x \leq \neg \diamond y \}.
\] 
Clearly $x \leq \diamond x \wedge \bigwedge \{ \neg \diamond y \mid x \leq \neg \diamond y \}$. If $  \diamond x \wedge \bigwedge \{ \neg \diamond y \mid x \leq \neg \diamond y \} \nleq x$, then there is $z \in \at(M)$ such that $z \leq \diamond x \wedge \bigwedge \{ \neg \diamond y \mid x \leq \neg \diamond y \}$ but $z \not\leq x$. From $z \leq \diamond x$ it follows that $\diamond z \leq \diamond x$. Since $x$ is the unique atom whose closure is $\diamond x$ and $z\ne x$, we must have $\diamond x \not \leq \diamond z$, and hence $x \not \leq \diamond z$. Therefore, $x \leq \neg \diamond z$, and so $z \leq \neg \diamond z$, which is a contradiction because $z \neq 0$. Thus, $M$ is a $T_0$-algebra.\label[remark]{rem:Spatialsober}

\item \cref{exa: simple box} shows that the converse of \cref{unique} may fail for non-spatial $M$. Indeed, the MT-algebra $M$ in that example is a non-spatial MT-algebra whose only atom is $(0,1)$ and $\diamond (0,1)=(1,1)$. Thus, each join-irreducible of $\C(M)$ is determined by a unique atom of $M$,  
but $M$ is not $T_0$.

\end{enumerate}

\end{remark}

Let $\MT_{\bf Sob}$ be the full subcategory of $\MT_0$ consisting of sober algebras. Also recall that $\Sob$ is the full subcategory of $\Top_0$ consisting of sober spaces.

\begin{lemma} \label{lem: Sober}
Let $ X \in \Top$ and $M \in \MT$. 
\begin{enumerate}[ref=\thelemma(\arabic*)]
\item $X\in\Sob$ iff $ (\P(X),\int)\in \MT_{\bf Sob}$. \label[lemma]{lem: Sob}
\item $M\in\MT_{\bf Sob}$ implies $\at(M)\in\Sob$. \label[lemma]{MT_sober}
\end{enumerate}
\end{lemma}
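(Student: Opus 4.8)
The plan is to reduce everything to the weak‑soberness condition on $\C(M)$, keeping in mind that the opens of $\at(M)$ are $\eta[\O(M)]$ and the closeds are $\eta[\C(M)]$ (\cref{rem: closed sets}), and that $\eta(\diamond x)=\overline{\{x\}}$ for every $x\in\at(M)$ (the computation \eqref{uniqueness} in the proof of \cref{unique}). First I would record a bookkeeping observation: in the co-frame $\C(M)$ the binary join of $b,c$ coincides with $b\vee c$ computed in $M$ (since $\diamond(b\vee c)=\diamond b\vee\diamond c$), so $p\in\C(M)$ is join-irreducible exactly when $p\ne 0$ and $p=b\vee c$ with $b,c\in\C(M)$ forces $p\in\{b,c\}$. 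In particular, for $M=\P(X)$ the join-irreducible elements of $\C(\P(X))$ are precisely the (nonempty) irreducible closed subsets of $X$, while $\at(\P(X))=\{\{x_0\}\mid x_0\in X\}$ with $\diamond\{x_0\}=\overline{\{x_0\}}$.

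For (1): if $X\in\Sob$, then $X$ is $T_0$, so $(\P(X),\int)\in\MT_0$ by \cref{lem: char T0}; and given a join-irreducible $p\in\C(\P(X))$, i.e.\ an irreducible closed set, soberness yields $x_0$ with $p=\overline{\{x_0\}}=\diamond\{x_0\}$ and $\{x_0\}\in\at(\P(X))$, so $(\P(X),\int)$ is weakly sober, hence sober. Conversely, if $(\P(X),\int)\in\MT_{\bf Sob}$, then it is a $T_0$-algebra, so $X\in\Top_0$ by \cref{lem: char T0}; and every irreducible closed $p$ is join-irreducible in $\C(\P(X))$, so weak soberness gives an atom $\{x_0\}$ with $p=\diamond\{x_0\}=\overline{\{x_0\}}$, the point $x_0$ being unique by $T_0$-ness. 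Hence $X\in\Sob$. (This second implication can alternatively be read off from (2) via the homeomorphism $\varepsilon\colon X\to\at(\P(X))$ of \cref{lem: epsilon homo}.)

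For (2): assume $M\in\MT_{\bf Sob}$. By \cref{T_0}, $\at(M)\in\Top_0$, so it remains to show that each irreducible closed $F\subseteq\at(M)$ is the closure of a point (necessarily unique by $T_0$-ness). Since $F\in\eta[\C(M)]$ and $\eta$ preserves arbitrary meets while $\C(M)$ is meet-closed in $M$, there is a least $c_F\in\C(M)$ with $\eta(c_F)=F$ (concretely, $c_F=\diamond\bigvee F$, the closure of the join of the atoms in $F$). The crux is to verify that $c_F$ is join-irreducible in $\C(M)$: if $c_F=b\vee c$ with $b,c\in\C(M)$, then $F=\eta(b)\cup\eta(c)$ is a union of two closed subsets of $F$, so irreducibility forces, say, $F=\eta(b)$, and minimality of $c_F$ then gives $c_F\le b\le c_F$, i.e.\ $b=c_F$; moreover $F\ne\varnothing$ forces $c_F\ne 0$. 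Now weak soberness of $M$ supplies an atom $x$ with $c_F=\diamond x$, whence $F=\eta(\diamond x)=\overline{\{x\}}$. Therefore $\at(M)\in\Sob$.

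The main obstacle is precisely the lifting step in (2): the map $\eta|_{\C(M)}\colon\C(M)\to\eta[\C(M)]$ may fail to be injective, so one cannot simply pick an arbitrary preimage of the irreducible closed set $F$ and hope it is join-irreducible in $\C(M)$; replacing it by the \emph{least} preimage $c_F$ and playing its minimality off against the irreducibility of $F$ is what forces join-irreducibility, after which weak soberness and the identity $\eta(\diamond x)=\overline{\{x\}}$ finish the argument. Everything in (1) is comparatively routine once the join-irreducibles of $\C(\P(X))$ are identified with the irreducible closed subsets of $X$.
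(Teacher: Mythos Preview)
Your proof is correct and follows essentially the same route as the paper. For (2) you single out $c_F=\diamond\bigvee F$ as the \emph{least} closed preimage of $F$ and use minimality to conclude $c_F=b$ from $\eta(b)=F$, whereas the paper verifies join-primeness directly via the computation $\diamond\bigvee\eta(a)\le\diamond\bigvee\eta(c)\le c$; these are the same observation in different packaging, and your framing in terms of minimality nicely isolates why an arbitrary preimage would not do.
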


\begin{proof}
(1) This follows from \cref{unique,rem:Spatialsober}.

(2) Let $F$ be an irreducible closed set in $\at(M)$. Since $F$ is closed, $F=\eta(a)$ for some $a\in \C(M)$ (see \cref{rem: closed sets}). 
Let $b = \diamond \bigvee \eta(a)$. We show that $\eta(a) = \eta(b)$. Clearly $\bigvee \eta(a) \leq a$, so $b=\diamond \bigvee \eta(a) \leq a$ because $a$ is closed. Therefore, $\eta(b) \subseteq \eta(a)$. For the other inclusion, suppose $x \in \eta(a)$. Then $x \leq \bigvee \eta(a)$, which implies that $x \leq \diamond \bigvee \eta(a) = b$. Thus, $\eta(a) \subseteq \eta(b)$, hence the equality. Clearly $b\in\C(M)$. We prove that $b$ is join-irreducible in $\C(M)$. 
Suppose $c,d \in \C(M)$ with $b \le c \vee d$. Then $\eta(b) \subseteq \eta(c) \cup \eta(d)$. Since $\eta(b) = \eta(a)$ is irreducible, $\eta(b) \subseteq \eta(c)$ or $\eta(b) \subseteq \eta(d)$. Without loss of generality we may assume that $\eta(b) \subseteq \eta(c)$. Then $b = \diamond \bigvee \eta(a) \leq \diamond \bigvee \eta(c) \leq c$. Therefore, $b$ is join-irreducible.
Since $M\in\MT_{\bf Sob}$, we have $b = \diamond x$ for a unique $x \in \at(M)$. Thus, $F=\eta(b)=\overline{\{x\}}$ by (\ref{uniqueness}). Consequently, $\at(M)\in \bf Sob$. 
\end{proof}

\cref{nonspatial} shows that there exist sober MT-algebras that are not spatial. Let $\SMT_{\bf Sob} = \MT_{\bf Sob} \cap \SMT$. Combining \cref{thm: MT0 and Top0,lem: Sober}, we obtain:

\begin{theorem} \label{thm: MTSob and Sob}
    The dual adjunction between $\Top_0$ and $\MT_0$ restricts to a dual adjunction between $\Sob$ and $\MT_{\bf Sob}$, and the dual equivalence between $\Top_0$ and $\SMT_0$ restricts to a dual equivalence between $\Sob$ and $\SMT_{\bf Sob}$.
\end{theorem}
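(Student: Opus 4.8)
The plan is to deduce this formally from \cref{thm: MT0 and Top0} together with \cref{lem: Sober}, exactly as the statement is advertised to follow. Since $\Sob$, $\MT_{\bf Sob}$, and $\SMT_{\bf Sob}=\MT_{\bf Sob}\cap\SMT$ are full subcategories of $\Top_0$, $\MT_0$, and $\SMT_0$ respectively, all of the relevant structure — the functors $\P$ and $\at$, the natural transformations $\varepsilon$ and $\eta$, and the triangle identities — is already provided by \cref{thm: dual adj MT and Top,thm: SMT dual to Top,thm: MT0 and Top0}. What must be verified is only that this structure restricts to the sober level.

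First I would check that the functors restrict. For $X\in\Sob$, \cref{lem: Sob} gives $\P(X)=(\P(X),\int)\in\MT_{\bf Sob}$, and since $\P(X)$ is atomic it in fact lies in $\SMT_{\bf Sob}$; thus $\P$ restricts to $\Sob\to\MT_{\bf Sob}$ (and $\Sob\to\SMT_{\bf Sob}$). Conversely, for $M\in\MT_{\bf Sob}$, \cref{MT_sober} gives $\at(M)\in\Sob$, so $\at$ restricts to $\MT_{\bf Sob}\to\Sob$ (and $\SMT_{\bf Sob}\to\Sob$). It follows that the unit components are morphisms in the smaller categories: $\varepsilon_X:X\to\at(\P(X))$ lies in $\Sob$ because $\at(\P(X))$ is sober (it is homeomorphic to $X$ by \cref{lem: epsilon homo}), and $\eta_M:M\to\P(\at(M))$ lies in $\MT_{\bf Sob}$ because $\P(\at(M))\in\MT_{\bf Sob}$ by the two lemmas just cited. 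Naturality and the triangle identities are inherited, so $(\P,\at)$ is a dual adjunction between $\Sob$ and $\MT_{\bf Sob}$.

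For the dual equivalence between $\Sob$ and $\SMT_{\bf Sob}$ I would reuse the criterion from the proof of \cref{thm: SMT dual to Top}: $\varepsilon_X$ is always a homeomorphism (\cref{lem: epsilon homo}), and $\eta_M$ is an MT-isomorphism precisely when $M$ is spatial (\cref{preserves interior}); as every object of $\SMT_{\bf Sob}$ is spatial by definition, both units are isomorphisms on the subcategories in question, whence the restricted adjunction is a dual equivalence. I do not anticipate a genuine obstacle: the mathematical content sits entirely in \cref{lem: Sober}, and the only point demanding care is that $\P$ carries sober spaces into the \emph{sober} (not merely $T_0$) MT-algebras and $\at$ carries sober MT-algebras to \emph{sober} spaces — which is exactly what \cref{lem: Sob,MT_sober} assert.
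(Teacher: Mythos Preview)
Your proposal is correct and follows exactly the route the paper takes: the theorem is stated as an immediate consequence of \cref{thm: MT0 and Top0} and \cref{lem: Sober}, and you have simply spelled out the verification that $\P$ and $\at$ restrict appropriately and that the units remain (iso)morphisms on the sober subcategories. The paper's own proof is a one-line ``combining \cref{thm: MT0 and Top0,lem: Sober}, we obtain,'' so your version is a more detailed rendering of the same argument.
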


Since $\Sob$ is dually equivalent to $\SFrm$, it follows from \cref{thm: MTSob and Sob} that $\SMT_{\bf Sob}$ is equivalent to $\SFrm$. In fact, it is the functor $\O$ that establishes this equivalence. 
To see this, we first characterize when $\vartheta : \at(M) \to \pt\O(M)$ is onto and when it is a homeomorphism, as promised at the end of \cref{sec: Functor O}.

\begin{lemma}\label{varthetaonto}
Let $M\in\MT$. 
\begin{enumerate}[ref=\thelemma(\arabic*)]
\item $\vartheta$ is onto iff $M$ is weakly sober.
\item $\vartheta$ is a homeomorphism if $M$ is weakly sober and $\at(M)$ is a $T_0$-space.  \label[lemma]{part 2}
\end{enumerate}
\end{lemma}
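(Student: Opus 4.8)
## Proof Strategy

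The plan is to prove both parts by exhibiting $\vartheta$ as a bijection (resp. homeomorphism) through comparing it with the points of $\O(M)$, using the fact established in \cref{continuous} that $\vartheta^{-1}(\zeta(a)) = \eta(a)$ for each $a \in \O(M)$.

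\textbf{Part (1).} For the forward direction, suppose $\vartheta$ is onto. I need to show that every join-irreducible $p \in \C(M)$ equals $\diamond x$ for some $x \in \at(M)$; equivalently (via the \cref{rem: spatial sober} reformulation), every meet-irreducible $m \in \O(M)$ equals $\neg \diamond x$ for some atom $x$. The key observation is that meet-irreducible elements of the frame $\O(M)$ correspond to completely prime filters of $\O(M)$: if $m$ is meet-irreducible (actually I should be careful here—in a frame, the relevant notion is that $\O(M) \setminus {\upset} m$ is not quite a point, so the correct statement to use is that $p \in \pt(\O(M))$ corresponds to $m = \bigvee(\O(M) \setminus p)$, the largest element not in $p$, which is meet-irreducible). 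Given such a point $p \in \pt(\O(M))$, surjectivity yields $x \in \at(M)$ with $\vartheta(x) = p$, i.e. ${\upset} x \cap \O(M) = p$. Then using \eqref{uniqueness}-style reasoning, $\eta(\diamond x) = \overline{\{x\}}$, and one checks that $\diamond x$ is exactly the closed element whose complement is the meet-irreducible open $\bigvee(\O(M)\setminus p)$; this gives weak soberness. Conversely, if $M$ is weakly sober, take any $p \in \pt(\O(M))$. Its complement has a largest element $m = \bigvee(\O(M) \setminus p)$, which is meet-irreducible in $\O(M)$. By weak soberness, $m = \neg \diamond x$ for some $x \in \at(M)$. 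I then claim $\vartheta(x) = p$: an open $a$ lies in $\vartheta(x)$ iff $x \leq a$ iff $a \not\leq \neg \diamond x = m$ (using that $x \leq \diamond x$ and $x$ is an atom) iff $a \notin {\downarrow} m$ iff $a \in p$.

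\textbf{Part (2).} Assume $M$ is weakly sober and $\at(M)$ is $T_0$. By part (1), $\vartheta$ is onto; by \cref{one-one}, the $T_0$ hypothesis makes $\vartheta$ one-to-one; by \cref{continuous}, $\vartheta$ is continuous. It remains to show $\vartheta$ is open, i.e. that $\vartheta$ sends open sets of $\at(M)$ to open sets of $\pt(\O(M))$. The open sets of $\at(M)$ are exactly the $\eta(a)$ for $a \in \O(M)$, and the open sets of $\pt(\O(M))$ are exactly the $\zeta(a)$. Since $\vartheta$ is a bijection with $\vartheta^{-1}(\zeta(a)) = \eta(a)$, applying $\vartheta$ to both sides gives $\zeta(a) = \vartheta(\eta(a))$, so $\vartheta$ carries the basic open $\eta(a)$ onto the open set $\zeta(a)$. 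Hence $\vartheta$ is open, and being a continuous open bijection, it is a homeomorphism.

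\textbf{Anticipated obstacle.} The delicate point is Part (1): correctly matching up join-irreducibles of $\C(M)$ (equivalently meet-irreducibles of $\O(M)$) with points of the frame $\O(M)$. In a general frame, meet-irreducible elements need not coincide with the complements-of-points, but the relevant direction is that each point $p$ determines the meet-irreducible $\bigvee(\O(M)\setminus p)$, and conversely each meet-irreducible $m$ determines the point $\{a \in \O(M) \mid a \not\leq m\}$—one must verify this is indeed a completely prime filter, which is where the frame distributivity of $\O(M)$ (\cref{thm: MT to Frm}) is used. Once this correspondence is set up cleanly, the translation between ``$\vartheta$ onto'' and ``weakly sober'' via the identity $\eta(\diamond x) = \overline{\{x\}}$ is essentially bookkeeping.
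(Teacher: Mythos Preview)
Your proposal is correct and follows essentially the same route as the paper. Both directions of Part~(1) hinge on the standard bijection between meet-irreducibles $m$ of the frame $\O(M)$ and points $p = \{a \in \O(M) \mid a \nleq m\}$ (with inverse $m = \bigvee(\O(M)\setminus p)$), together with the verification that $\vartheta(x) = p$ iff $m = \neg\diamond x$; the paper carries out exactly these computations, citing \cite[p.~14]{PicadoPultr2012} for the frame-theoretic correspondence you flag as the anticipated obstacle. For Part~(2) the paper likewise combines Part~(1), \cref{one-one}, and \cref{continuous}, and proves $\vartheta(\eta(a)) = \zeta(a)$ directly---your observation that this follows immediately from bijectivity and $\vartheta^{-1}(\zeta(a)) = \eta(a)$ is a slightly cleaner way to finish. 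One minor point: the reference to $\eta(\diamond x) = \overline{\{x\}}$ in your forward direction of (1) is not actually needed; the argument goes through purely via the equivalence chain $a \leq m \iff a \leq \neg\diamond x$ for open $a$, as you do in the converse.
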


\begin{proof}
(1) Suppose $M$ is weakly sober. If $p \in \pt(\O(M))$, then $m:=\bigvee \{a \in \O(M) \mid a \notin p\}$ is meet-irreducible in $\O(M)$ (see, e.g., \cite [p.~14]{PicadoPultr2012}). Since $M$ is weakly sober, there is $x\in\at(M)$ such that $m = \neg \diamond x$.  
For $a\in\O(M)$, we have: 
\[
a \notin p \iff a \leq \neg \diamond x \iff \diamond x \leq \neg a \iff x \leq \neg a \iff x \nleq a \iff a \notin \upset{x}.
\]
Thus, $\vartheta(x)=\upset{x}\cap\O(M)=p$, and hence $\vartheta$ is onto.

Conversely, suppose $\vartheta$ is onto. If $m \in \O(M)$ is meet-irreducible, then $p:=\{ a \in \O(M) \mid a \nleq m \}$ is a completely prime filter of $\O(M)$ (see, e.g., \cite [p.~14]{PicadoPultr2012}). Since $\vartheta$ is onto, $p = \upset{x} \cap \O(M)$ for some $x \in \at(M)$. Therefore, for $a \in \O(M)$, we have 
\[
a \leq m \iff a \notin p \iff x \nleq a \iff x \leq \neg a \iff \diamond x \leq \neg a \iff a \leq \neg \diamond x.
\] 
Thus, $m=\neg \diamond x$, and hence $M$ is weakly sober.

(2) If $\vartheta$ is a homeomorphism, then $M$ is weakly sober by (1) and $\at(M)$ is a $T_0$-space by \cref{one-one}. Conversely, if $M$ is weakly sober and $\at(M)$ is a $T_0$-space, then $\vartheta$ is a continuous bijection by (1) and \cref{one-one}. It is left to show that $\vartheta^{-1}$ is also continuous. For this it suffices to show that $\vartheta(\eta(a)) = \zeta(a)$ for all $a \in \O(M)$. Let $y \in \vartheta(\eta(a))$. Then there is $x \in \eta(a)$ such that $y = 
\upset x \cap \O(M)$. Since $x \leq a$, we have $a \in y$, and so $y \in \zeta(a)$. Conversely, let $y \in \zeta(a)$. Then $a \in y$. Since $\vartheta$ is onto, there is $x\in\at(M)$ such that $y = 
\upset x \cap \O(M)$. Therefore, $x \leq a$, so $x \in \eta(a)$, and hence $y \in \vartheta(\eta(a))$. Thus, 
$\vartheta$ is a homeomorphism. 
\end{proof}

\begin{remark} \label{rem: sober implies theta homeo}
If $M$ is a sober MT-algebra, then $M$ is weakly sober and $T_0$, hence $\at(M)$ is a $T_0$-space by \cref{T_0}. Therefore, $\vartheta$ is a homeomorphism by \cref{part 2}.
\end{remark}

\begin{theorem} \label{thm: full}
    The restriction $\O:\SMT_{\bf Sob}\to\SFrm$ is an equivalence.
\end{theorem}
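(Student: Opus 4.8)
\textit{Proof strategy.} The plan is to identify the restriction of $\O$ to $\SMT_{\bf Sob}$ with the composite of the two dual equivalences $\at\colon\SMT_{\bf Sob}\to\Sob$ and $\Omega\colon\Sob\to\SFrm$. The bridge is the natural transformation obtained by restricting the unit $\eta$ to open elements.

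\textit{Step 1: a natural isomorphism $\O\cong\Omega\circ\at$ on $\SMT$.} For $M\in\SMT$ set $\alpha_M:=\eta_M|_{\O(M)}$. By \cref{lem: MT to Top} (and \cref{lemma: restriction mt is frame}) this is a frame homomorphism $\O(M)\to\tau$, and by \cref{cor: OP=Omega} its codomain $\tau$ is exactly $\Omega(\at(M))=\O(\P(\at(M)))$. Since $\tau=\eta_M[\O(M)]$ by construction, $\alpha_M$ is onto; since $M$ is spatial, $\eta_M$ is injective by \cref{preserves interior}(1), so $\alpha_M$ is a frame isomorphism. Naturality of $\alpha$ is the naturality of the unit $\eta\colon 1_{\MT}\to\P\circ\at$ of \cref{thm: dual adj MT and Top} restricted to open elements: for an MT-morphism $h\colon M\to N$ and $a\in\O(M)$ we have $h(a)\in\O(N)$, hence $\alpha_N(\O(h)(a))=\eta_N(h(a))=\P(\at(h))(\eta_M(a))=[\at(h)]^{-1}(\eta_M(a))=\Omega(\at(h))(\alpha_M(a))$, i.e.\ $\alpha_N\circ\O(h)=\Omega(\at(h))\circ\alpha_M$. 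Thus $\alpha\colon\O\to\Omega\circ\at$ is a natural isomorphism on $\SMT$.

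\textit{Step 2: conclude.} Restrict $\alpha$ to $\SMT_{\bf Sob}$; this gives a natural isomorphism $\O\cong\Omega\circ\at$ of functors $\SMT_{\bf Sob}\to\SFrm$ (that the codomain is $\SFrm$ is \cref{SMT implies SFrm}, and is also clear since $\at(M)\in\Sob$). By \cref{thm: MTSob and Sob} the functor $\at$ restricts to a dual equivalence $\SMT_{\bf Sob}\to\Sob$, and by \cref{thm: DP duality} the functor $\Omega$ restricts to a dual equivalence $\Sob\to\SFrm$. The composite of two contravariant equivalences is a (covariant) equivalence, so $\Omega\circ\at\colon\SMT_{\bf Sob}\to\SFrm$ is an equivalence of categories; being naturally isomorphic to it, $\O\colon\SMT_{\bf Sob}\to\SFrm$ is an equivalence as well.

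\textit{Remarks on the argument.} The only genuine content is Step 1, and within it the point that requires care is that spatiality of $M$ is what upgrades $\alpha_M$ from a surjection to an isomorphism; everything else is bookkeeping (notably the identification $\Omega(\at(M))=\O(\P(\at(M)))$ via \cref{cor: OP=Omega}) together with results already in hand. One could instead argue directly: faithfulness of $\O$ on $\SMT_{\bf Sob}$ is immediate from \cref{thm: O faithful for T0} since $\SMT_{\bf Sob}\subseteq\MT_0$; essential surjectivity holds because for $L\in\SFrm$ the MT-algebra $\P(\pt(L))$ is spatial and, by \cref{lem: Sob} (using $\pt(L)\in\Sob$), sober, with $\O(\P(\pt(L)))=\Omega(\pt(L))\cong L$; and fullness follows by transporting a given frame homomorphism $\O(M)\to\O(N)$ through the isomorphisms $\alpha_M,\alpha_N$ to a map $\Omega(\at(M))\to\Omega(\at(N))$, realizing it as $\Omega(f)$ by fullness of $\Omega$ on $\Sob$, and checking that $\eta_N^{-1}\circ\P(f)\circ\eta_M$ is an MT-morphism sent by $\O$ to the original homomorphism. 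Either way the crux is the same isomorphism $\alpha_M$, valid precisely because $M$ is spatial.
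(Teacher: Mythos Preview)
Your proof is correct and takes a genuinely different route from the paper. The paper verifies directly that $\O$ is essentially surjective, faithful, and full: essential surjectivity via $L\cong\Omega(\pt(L))=\O(\P(\pt(L)))$, faithfulness via \cref{thm: O faithful for T0}, and fullness by an explicit construction that passes through the homeomorphism $\vartheta_M\colon\at(M)\to\pt(\O(M))$ of \cref{rem: sober implies theta homeo}, transporting a given frame homomorphism along $\vartheta$ and $\eta$ and then verifying by hand (the Claim) that the resulting MT-morphism restricts correctly. Your approach short-circuits this: you exhibit the natural isomorphism $\alpha\colon\O\Rightarrow\Omega\circ\at$ on $\SMT$ using only the unit $\eta$, and then invoke the already-established dual equivalences $\at\colon\SMT_{\bf Sob}\to\Sob$ and $\Omega\colon\Sob\to\SFrm$. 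This is cleaner and more conceptual; in particular, it never touches $\vartheta$ or the comparison with $\pt(\O(M))$, and it makes transparent why the result holds (namely, $\O$ factors through the two known equivalences up to natural isomorphism). The paper's approach, by contrast, is more self-contained at the level of this single proof and makes the fullness mechanism visible, at the cost of a longer computation. Your closing Remarks essentially recover the paper's direct argument as an alternative, so you have identified both routes.
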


\begin{proof}

By \cref{SMT implies SFrm}, the restriction $\O :\SMT_{\bf Sob} \to \SFrm$ is well defined. It thus suffices to show that $\O$ is essentially surjective, full, and faithful (see \cite[p.~93]{MacLane}). Let $L\in \SFrm$. By \cref{thm: DP duality,cor: OP=Omega}, $L \cong \Omega(\pt(L))=\O(\P(\pt(L))$. Therefore, $\O$ is essentially surjective. Also, $\O$ is faithful by \cref{thm: O faithful for T0}. To see that $\O$ is full, let $M,N\in\SMT_{\bf Sob}$ and $h:\O(M) \to \O(N)$ be a frame homomorphism. Then $\pt(h): \pt(\O(N)) \to \pt(\O(M))$ is a continuous map.
Since $M$ and $N$ are sober, $\vartheta_{M}:\at(M) \to \pt(\O(M))$ and $\vartheta_{N}:\at(N)\to \pt(\O(N))$ are homeomorphisms by \cref{rem: sober implies theta homeo}. Let $f=\vartheta^{-1}_{M}\circ \pt(h) \circ \vartheta_{N}$. Then $f:\at(N) \to \at(M)$ is a continuous map. Therefore, $\P(f): \P(\at(M)) \to \P(\at(N))$ is an MT-morphism. By \cref{thm: SMT dual to Top}, $\eta_{M}:M \to \P(\at(M))$ and $\eta_{N}:N \to \P(\at(N))$ are MT-isomorphisms. Let $g=\eta^{-1}_{N}\circ \P(f) \circ \eta_M$. Then $g$ is an MT-morphism. We show that $g|_{\O(M)}=h$.
\[
\begin{tikzcd}
	\O(M) \ar[r, rightarrow] \ar[d, "h"] & \pt(\O(M)) \ar[r, leftarrow, "\vartheta_{M}" {yshift=3pt}] & \at(M)\ar[r, rightarrow] & \P(\at(M)) \ar[d, "\P(f)"]  \ar[r, leftarrow, "\eta_{M}" {yshift=3pt}] & M \ar[d, "g"] \\
	\O(N) \ar[r, rightarrow]& \pt(\O(N)) \ar[u, "\pt(h)"']  \ar[r, leftarrow, "\vartheta_{N}" {yshift=3pt}] & \at(N) \ar[r, rightarrow] \ar[u, "f"'] & \P(\at(N))  \ar[r, leftarrow, "\eta_{N}" {yshift=3pt}] & N
\end{tikzcd}
\]

\begin{claim}
$\pt(h)^{-1}(\vartheta_{M}(\eta_{M}(a))) = \vartheta_{N}(\eta_{N}(h(a)))$ for each $a \in \O(M)$.
\end{claim} 

\begin{proof}
Since $\vartheta_M$ is a homeomorphism, for each $p \in \pt(\O(M))$ there is a unique $z \in \at(M)$ such that $p=\upset z \cap \O(M)$; and a similar result is true for $\vartheta_N$. Now, let $y \in \pt(\O(N))$. Then
\begin{align*}
	 y \in \pt(h)^{-1}(\vartheta_{M}(\eta_{M}(a))) 
	 &\iff \pt(h)(y) \in \vartheta_{M}(\eta_{M}(a)) \\ 
	 &\iff \pt(h)(y) \in \{\vartheta_{M}(x)\mid x \leq a \} \\
	 &\iff \pt(h)(y)=\upset x \cap \O(M) \mbox{ for a unique } x \leq a \\
	 &\iff a \in \pt(h)(y) \\ 
	 & \iff h(a) \in y \\ 
	 & \iff y = \upset x' \cap \O(N) \mbox{ for a unique } x' \leq h(a) \\
	 &\iff y \in \{\vartheta_{N}(x')\mid x' \leq h(a) \} \\
	 &\iff y \in \vartheta_{N}(\eta_{N}(h(a))). \qedhere
\end{align*}
\end{proof}

Let $a\in\O(M)$. Then  
\begin{align*}
g(a) &= (\eta_N^{-1} \circ \P(f) \circ \eta_M)(a) = (\eta_N^{-1} \circ f^{-1} \circ \eta_M)(a) \\
&= \left(\eta_N^{-1} \circ \left(\vartheta_{M}^{-1}\circ\pt(h)\circ\vartheta_N\right)^{-1} \circ \eta_{M}\right)(a) \\
&= \left(\eta_N^{-1} \circ \vartheta_{N}^{-1} \circ \pt(h)^{-1} \circ \vartheta_{M} \circ \eta_{M}\right)(a) \\
&= \left(\eta_N^{-1} \circ \vartheta_N^{-1} \circ \vartheta_N \circ \eta_N \circ h\right) (a) = h(a), 
\end{align*}
where the second to last equality follows from the Claim. Thus, $\O$ is full. 
\end{proof}

Consequently, we obtain the following diagram of equivalences and dual equivalences that commutes up to natural isomorphisms.

\begin{center}
      \begin{tikzcd}[row sep=2em]
      \bf{SMT_{Sob}}\arrow[->]{rr}{\O}\arrow[shift left=.5ex]{dr}{\at} & & \SFrm \ar[dl, "pt", shift left=.5ex]\\
      & \Sob \arrow[shift left=.5ex]{ul} {\P} \arrow[ur, "\Omega", shift left=.5ex] &
      \end{tikzcd}
\end{center}

\section{\texorpdfstring{$T_{1/2}$ and $T_1$-algebras}{MT1/2 to Top1/2}}\label{sec:6}

In this section we generalize $T_{1/2}$ and $T_1$ separation axioms for topological spaces to MT-algebras. We prove that an MT-algebra $M$ is a $T_{1/2}$-algebra iff $M$ is isomorphic to the MacNeille completion of the Boolean envelope of $\O(M)$. We also prove that a sober $T_{1/2}$-algebra $M$ is spatial iff $\O(M)$ is a spatial frame. Moreover, we show that an MT-algebra $M$ is a $T_1$-algebra iff $M$ is a $T_{1/2}$-algebra and $\O(M)$ is a subfit frame, thus generalizing a similar result for topological spaces. Furthermore, we show that the dual adjunctions and dual equivalences of the previous section restrict to the categories of $T_{1/2}$ and $T_1$-algebras.

Let $X$ be a topological space. We recall that  
$X$ is a a {\em $T_d$-space} or a {\em $T_{1/2}$-space}  
if $\{x\}$ is locally closed for each $x\in X$. It is well known and easy to see that each $T_{1/2}$-space is a $T_0$-space. Let $\Top_{1/2}$ be the full subcategory of $\Top_0$ consisting of $T_{1/2}$-spaces.

\begin{definition} \label{def: T half}
We call $M\in\MT$ a {\em $T_{1/2}$-algebra} if $\LC(M)$ join-generates $M$. 
\end{definition}

\begin{remark}
Equivalently, $M\in\MT$ is a $T_{1/2}$-algebra provided $\LO(M)$ meet-generates $M$.
\end{remark}

Since locally closed elements are weakly locally closed, every $T_{1/2}$-algebra is a $T_0$-algebra. Let $\MT_{1/2}$ be the full subcategory of $\MT_0$ consisting of $MT_{1/2}$-algebras. \cref{nonspatial} shows that there are $T_{1/2}$-algebras that are not spatial. Let $\SMT_{1/2} = \MT_{1/2} \cap \SMT$.

\begin{lemma} \label{lem: Td spaces}
Let $ X \in \Top$ and $M \in \MT$. 
\begin{enumerate} [ref=\thelemma(\arabic*)] 
\item  
$X \in \Top_{1/2}$ iff $(\P(X),\int) \in \MT_{1/2}$. 
\item $M \in \MT_{1/2}$ implies $\at(M) \in \Top_{1/2}$. \label [lemma] {lem: Td alg}
\end{enumerate}
\end{lemma}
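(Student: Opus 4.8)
The plan is to follow the same template used for \cref{lem: Top0} (the $T_0$ case), adapting the argument to locally closed singletons instead of weakly locally closed ones. The key observation for part (1) is the pointwise characterization: a space $X$ is a $T_{1/2}$-space iff for each $x\in X$ the singleton $\{x\}$ is locally closed in $X$, i.e. $\{x\} = U\cap F$ for some open $U$ and closed $F$. Since in $(\P(X),\int)$ the open elements are exactly the open sets, the closed elements exactly the closed sets, and every $A\subseteq X$ is the union of its singletons, we get immediately that $\LC(\P(X),\int)$ join-generates $\P(X)$ iff every $\{x\}$ lies in $\LC(\P(X),\int)$ iff every $\{x\}$ is locally closed in $X$ iff $X\in\Top_{1/2}$. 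This direction is essentially bookkeeping once the pointwise characterization of $T_{1/2}$-spaces is recalled.

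For part (2), suppose $M\in\MT_{1/2}$ and let $x\in\at(M)$. I would first note that since $\LC(M)$ join-generates $M$ and $x$ is an atom, $x$ itself must be locally closed: writing $x = \bigvee T$ with each $t\in T$ of the form $t = u_t\wedge \neg v_t$ for $u_t,v_t\in\O(M)$, atomicity of $x$ forces $x\le t$ for some single $t\in T$, and then $x\le t\le x$ gives $x = u\wedge\neg v$ for some $u,v\in\O(M)$. Now apply $\eta:M\to\P(\at(M))$: since $\eta$ is an onto MT-morphism and $u,v$ are open, $\eta(u),\eta(v)$ are open in $\at(M)$, and because $\eta(\neg v) = \eta(v)^c$ is closed, $\eta(x) = \eta(u)\cap\eta(v)^c$ is locally closed in $\at(M)$. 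It remains to identify $\eta(x)$ with the singleton $\{x\}$ in $\at(M)$ — but from \cref{preserves interior}(1), or more directly from the fact that $y\le x$ for atoms $y$ forces $y=x$, we get $\eta(x) = \{y\in\at(M)\mid y\le x\} = \{x\}$. Hence $\{x\}$ is locally closed in $\at(M)$ for every atom $x$, so $\at(M)\in\Top_{1/2}$.

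The argument is routine throughout; the only point requiring a little care is the reduction of "$x$ is a join of locally closed elements" to "$x$ is itself locally closed", which uses that an atom below a join lies below one of the joinands — here it is crucial that the joinands are $u_t\wedge\neg v_t$ (so that $x\le t$ immediately upgrades to $x = t$), rather than the more complex weakly-locally-closed expressions appearing in the $T_0$ case, where one could only conclude separation of points rather than local closedness of singletons. I do not expect any genuine obstacle; the proof is parallel to \cref{lem: Top0} and \cref{T_0}, and the main content is simply unwinding the definitions of $\LC$ and of $T_{1/2}$-spaces together with the fact that $\eta$ preserves the relevant structure.
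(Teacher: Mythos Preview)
Your proposal is correct and follows essentially the same approach as the paper's proof. In fact you supply a detail the paper omits: the paper simply asserts that an atom $x$ in a $T_{1/2}$-algebra satisfies $x=b\wedge c$ with $b\in\O(M)$, $c\in\C(M)$, whereas you justify this via complete join-primeness of atoms in a complete Boolean algebra (your reference to \cref{preserves interior}(1) for $\eta(x)=\{x\}$ is misplaced, but the direct argument you give is fine).
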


\begin{proof}

(1) By definition, $X$ is a $T_{1/2}$-space iff for each $x \in X$ we have $\{x\} = U \cap F$ for some $U$ open and $F$ closed. 
Therefore, since each $A \subseteq X$ is a union of singletons, we obtain that $X\in\Top_{1/2}$ iff $ (\P(X),\int)\in\MT_{1/2}$.

(2) Let $x \in \at(M)$. Since $M \in \MT_{1/2}$, we have  $x = b \wedge c$ for some $b \in \O(M)$ and $c \in \C(M)$. Therefore, $\{x\} = \eta(b) \cap \eta(c)$.   
Since $\eta(b)$ is open and $\eta(c)$ is closed, each point of $\at(M)$ is locally closed. Thus, $\at(M)$ is a $T_{1/2}$-space.
    \end{proof}

As an immediate consequence of \cref{thm: MT0 and Top0,lem: Td spaces}, we obtain:

\begin{theorem} \label{thm: duality for Td}
    The dual adjunction between $\Top_0$ and $\MT_0$ restricts to a dual adjunction between $\Top_{1/2}$ and $\MT_{1/2}$, and the dual equivalence between $\Top_0$ and $\SMT_0$ restricts to a dual equivalence between $\Top_{1/2}$ and $\SMT_{1/2}$.
\end{theorem}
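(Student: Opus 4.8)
The plan is to deduce \cref{thm: duality for Td} from \cref{thm: MT0 and Top0} exactly the way that theorem was deduced from \cref{thm: dual adj MT and Top,thm: SMT dual to Top}: one checks that the two functors $\P$ and $\at$ of the established adjunction send the smaller categories into each other, and then the units automatically restrict. Concretely, the dual adjunction between $\Top_0$ and $\MT_0$ is already in hand, so it suffices to verify that (a) if $X\in\Top_{1/2}$ then $\P(X)=(\P(X),\int)\in\MT_{1/2}$, and (b) if $M\in\MT_{1/2}$ then $\at(M)\in\Top_{1/2}$. Both of these are precisely the content of \cref{lem: Td spaces}(1) and \cref{lem: Td alg}, so the "hard work" has in fact been front-loaded into that lemma.

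First I would spell out the adjunction part. By \cref{thm: MT0 and Top0} we have a dual adjunction $(\P,\at)$ between $\Top_0$ and $\MT_0$ with units $\varepsilon$ and $\eta$. By \cref{lem: Td spaces}(1), $\P$ restricts to a functor $\Top_{1/2}\to\MT_{1/2}$ (a morphism of $\Top_{1/2}$ is just a continuous map, and $\P$ acts on it as $f^{-1}$, which is already an MT-morphism), and by \cref{lem: Td alg}, $\at$ restricts to a functor $\MT_{1/2}\to\Top_{1/2}$. Since $\Top_{1/2}$ and $\MT_{1/2}$ are full subcategories of $\Top_0$ and $\MT_0$ respectively, the unit components $\varepsilon_X:X\to\at(\P(X))$ for $X\in\Top_{1/2}$ and $\eta_M:M\to\P(\at(M))$ for $M\in\MT_{1/2}$ are morphisms in these subcategories, and naturality is inherited. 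Hence $(\P,\at)$ restricts to a dual adjunction between $\Top_{1/2}$ and $\MT_{1/2}$.

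For the equivalence part I would argue the same way, now starting from the dual equivalence between $\Top_0$ and $\SMT_0$ of \cref{thm: MT0 and Top0} and intersecting with the $T_{1/2}$ conditions. Recall $\SMT_{1/2}=\MT_{1/2}\cap\SMT$. On the one hand, for $X\in\Top_{1/2}$ we have $\P(X)\in\SMT$ (it is atomic, being a powerset) and $\P(X)\in\MT_{1/2}$ by \cref{lem: Td spaces}(1), so $\P(X)\in\SMT_{1/2}$. On the other hand, for $M\in\SMT_{1/2}$ we have $M$ atomic, so $\at(M)\in\Top_{1/2}$ by \cref{lem: Td alg}. Since $\varepsilon_X$ is a homeomorphism for every $X\in\Top$ (in particular for $X\in\Top_{1/2}$) by \cref{lem: epsilon homo}, and $\eta_M$ is an MT-isomorphism for every $M\in\SMT$ (in particular for $M\in\SMT_{1/2}$) by \cref{preserves interior}, the restricted adjunction between $\Top_{1/2}$ and $\SMT_{1/2}$ is a dual equivalence.

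The only thing that could be called an obstacle is purely bookkeeping: making sure that the smaller categories really are full subcategories so that morphism conditions need no separate check, and that the restricted functors are genuinely well defined on objects — but that is exactly what \cref{lem: Td spaces,lem: Td alg} guarantee. So the proof is a one-paragraph "combine \cref{thm: MT0 and Top0,lem: Td spaces}" argument, mirroring the proof of \cref{thm: MT0 and Top0} itself; there is no essential new content beyond citing those results.
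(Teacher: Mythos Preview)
Your proposal is correct and matches the paper's approach exactly: the paper states the theorem as an immediate consequence of \cref{thm: MT0 and Top0,lem: Td spaces}, which is precisely the combination you identify and spell out.
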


We next give a convenient characterization of $T_{1/2}$-algebras.

\begin{theorem}\label{T_1/2}
An MT-algebra $M$ is a $T_{1/2}$-algebra iff $M$ is isomorphic to $\overline{B(\O(M))}$.
\end{theorem}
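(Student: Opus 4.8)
The plan is to prove the two implications separately, building on the essential-surjectivity construction from \cref{thm: essentially surj}. Throughout, write $L=\O(M)$, let $(B(L),e)$ be the Boolean envelope, and let $\overline{B(L)}$ be its MacNeille completion equipped with the lower extension $\sq$ of the right adjoint $\square$ of $e$; by \cref{lemma: macneille is MT} this is an MT-algebra, and by (the proof of) \cref{thm: essentially surj} we have $\O(\overline{B(L)},\sq)=L$, with $\sq$ the right adjoint of the frame embedding $L\hookrightarrow\overline{B(L)}$.

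For the ``if'' direction, suppose $M\cong\overline{B(\O(M))}$; it suffices to show that $\overline{B(L)}$ is a $T_{1/2}$-algebra for every frame $L$, i.e. that $\LC(\overline{B(L)})$ join-generates $\overline{B(L)}$. Here I would use two facts: first, $B(L)$ join-generates $\overline{B(L)}$ (a basic property of the MacNeille completion — every element is a join of elements below it from the dense subalgebra); second, every element of $B(L)$ is itself locally closed in $\overline{B(L)}$. The second fact is the crux of this direction: elements of $B(L)$ are finite Boolean combinations of elements of $L=\O(\overline{B(L)})$, so I would argue that such a combination can be written as a finite join of elements of the form $u\wedge\neg v$ with $u,v\in L$ (using the distributive normal form in the Boolean envelope), and $u\wedge\neg v$ is the meet of the open element $u$ and the closed element $\neg v$, hence locally closed. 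Since $\LC$ is closed under finite joins of its members only up to passing to the join-generated sublattice — but join-generation is all we need — composing the two facts gives that $\LC(\overline{B(L)})$ join-generates $\overline{B(L)}$.

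For the ``only if'' direction, suppose $M$ is a $T_{1/2}$-algebra. I want to produce an MT-isomorphism $M\cong\overline{B(L)}$. The natural candidate is built from the universal property of the Boolean envelope: the inclusion $L=\O(M)\hookrightarrow M$ is a bounded lattice homomorphism into the Boolean algebra $M$, so it extends uniquely to a Boolean homomorphism $B(\iota):B(L)\to M$; since $L$ generates the Boolean subalgebra it lands in, and one checks $B(\iota)$ is injective (its kernel would have to kill a nonzero finite combination of opens, contradicting that $L$ sits inside the Boolean algebra $M$ — more carefully, $B(L)$ is the free Boolean algebra on the bounded lattice $L$ and $\iota$ is an embedding, so $B(\iota)$ is an embedding), we may regard $B(L)$ as a Boolean subalgebra of $M$. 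Because $M$ is complete, $B(\iota)$ extends further to a complete Boolean homomorphism $\overline{B(L)}\to M$; the point is to show this is an isomorphism. Surjectivity is exactly where the $T_{1/2}$ hypothesis enters: every element of $M$ is a join of locally closed elements $u\wedge\neg v$ with $u,v\in\O(M)=L\subseteq B(L)$, hence every element of $M$ lies in the join-closure of (the image of) $B(L)$, which is the image of $\overline{B(L)}$. Injectivity follows because $B(L)$ is join-dense in $\overline{B(L)}$ and, being a Boolean subalgebra of the complete Boolean algebra $M$ whose join-completion inside $M$ we are mapping onto, the MacNeille completion is realized faithfully; concretely, the map $\overline{B(L)}\to M$ sends $x\mapsto\bigvee\{b\in B(L)\mid b\le x\}$, and one verifies this is a two-sided inverse up to the natural identifications. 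Finally I must check the iso is an MT-morphism in both directions, i.e. it intertwines $\sq$ with $\square_M$: since $\sq$ is the right adjoint of $L\hookrightarrow\overline{B(L)}$ and $\square_M$ is the right adjoint of $\O(M)\hookrightarrow M$ (\cref{square and diamond as adjoints}), and the iso restricts to the identity on $L=\O(M)$, uniqueness of adjoints forces $\square_M$ to correspond to $\sq$.

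The main obstacle I anticipate is the surjectivity/injectivity bookkeeping in the ``only if'' direction: one must be careful that the ``join of locally closed elements'' witnessing the $T_{1/2}$ property really lands inside the image of the MacNeille completion — i.e. that $B(L)$, as a subalgebra of $M$, is join-dense in $M$ — and conversely that no collapsing occurs, so that the comparison map is genuinely injective. This amounts to identifying, inside $M$, the MacNeille completion of the Boolean subalgebra generated by $\O(M)$, and showing the $T_{1/2}$ condition says precisely that this completion is all of $M$; the Funayama-type fact that $L\hookrightarrow\overline{B(L)}$ is a frame embedding (invoked in the remark after \cref{thm: essentially surj}) is what guarantees the open elements on the $\overline{B(L)}$ side match $L$, so the two MT-algebra structures agree.
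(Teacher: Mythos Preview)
Your overall strategy matches the paper's: view $B(\O(M))$ as a Boolean subalgebra of $M$ and show that $M\cong\overline{B(\O(M))}$ is equivalent to $B(\O(M))$ being join-dense in $M$, which in turn is equivalent to $\LC(M)$ join-generating $M$. The ``if'' direction is fine and is exactly what the paper does.

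There is one genuine gap in your ``only if'' direction. You write: ``Because $M$ is complete, $B(\iota)$ extends further to a complete Boolean homomorphism $\overline{B(L)}\to M$.'' Completeness of $M$ alone does \emph{not} guarantee this. The MacNeille completion has no such universal mapping property: a Boolean embedding $B\hookrightarrow M$ into a complete Boolean algebra need not extend to a complete homomorphism $\overline{B}\to M$ (indeed \cref{exa: not a functor} in this paper is precisely an instance where such an extension fails). The formula you later give, $x\mapsto\bigvee\{b\in B(L)\mid b\le x\}$, always defines an order-preserving map, but it is a complete Boolean homomorphism only when $B(L)$ is already join-dense in $M$, which is exactly what you are trying to use the map to prove.

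The fix---which you yourself articulate in your final paragraph---is to bypass the extension entirely and argue directly via the characterization of MacNeille completions: a complete Boolean algebra containing $B$ as a join-dense (equivalently, for Boolean algebras, meet-dense) subalgebra \emph{is} the MacNeille completion of $B$, up to unique isomorphism. So the argument is simply: $T_{1/2}$ means $\LC(M)$ join-generates $M$; each locally closed element $u\wedge\neg v$ lies in the Boolean subalgebra $B(\O(M))\subseteq M$; hence $B(\O(M))$ is join-dense in $M$; hence $M\cong\overline{B(\O(M))}$. This is exactly the paper's (much shorter) proof. Your observation about the MT-structure---that the interior operators agree because both are right adjoint to the same inclusion $L\hookrightarrow M$---is correct and fills in a point the paper leaves implicit.
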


\begin{proof}
Let $M$ be an MT-algebra. Then $B(\O(M))$ is isomorphic to a Boolean subalgebra of $M$ (see, e.g., \cite[p.~99]{DL}). Therefore, $M$ is isomorphic to $\overline{B(\O(M))}$ iff $B(\O(M))$ join-generates $M$. Since each element of $B(\O(M))$ can be written as $a=\bigvee(u \wedge \neg v) $, where $u,v \in \O(M)$ (see, e.g., \cite[p.~74]{Rasiowasikorski}), $a=\bigvee(u \wedge c) $, where $u \in \O(M)$ and $c \in \C(M)$. Thus, $B(\O(M))$ join-generates $M$ iff $\LC(M)$ join-generates $M$, and the result follows.
\end{proof}

\begin{corollary}
The restriction  $\O:\MT_{1/2} \to \Frm$ is essentially surjective and faithful.
\end{corollary}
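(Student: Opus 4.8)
The plan is to derive both claims directly from \cref{T_1/2} and from results already proved for $\MT_{1/2}$ and $\MT_0$. For essential surjectivity, let $L\in\Frm$ be arbitrary. By \cref{thm: essentially surj} (more precisely, its proof), the MacNeille completion $M:=\overline{B(L)}$ of the Boolean envelope of $L$ is an MT-algebra with $\O(M)=L$. Since $\O(M)=L$, we have $M=\overline{B(\O(M))}$, so by \cref{T_1/2} (the ``if'' direction) $M$ is a $T_{1/2}$-algebra, i.e.\ $M\in\MT_{1/2}$. Thus $\O(M)\cong L$ with $M\in\MT_{1/2}$, which is exactly essential surjectivity of the restricted functor. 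The main (and only) subtlety here is to make sure we are invoking the proof of \cref{thm: essentially surj}, which established $\O(\overline{B(L)})=L$, and not merely its statement.

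For faithfulness, recall that every $T_{1/2}$-algebra is a $T_0$-algebra (stated just after \cref{def: T half}), so $\MT_{1/2}$ is a full subcategory of $\MT_0$. By \cref{thm: O faithful for T0}, the restriction $\O:\MT_0\to\Frm$ is faithful, and the restriction of a faithful functor to a (full) subcategory is again faithful. Hence $\O:\MT_{1/2}\to\Frm$ is faithful. No further argument is needed; this is a one-line consequence.

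There is essentially no obstacle here: both parts are bookkeeping assembled from \cref{thm: essentially surj}, \cref{T_1/2}, and \cref{thm: O faithful for T0}. The only point requiring a modicum of care is the phrasing of essential surjectivity: we need a $T_{1/2}$-algebra whose $\O$-image is isomorphic to the given frame $L$, and the canonical choice $\overline{B(L)}$ works precisely because $\O\left(\overline{B(L)}\right)$ equals $L$ on the nose (by the computation in the proof of \cref{thm: essentially surj}) and because that computation exhibits $\overline{B(L)}$ in the form $\overline{B(\O(\overline{B(L)}))}$ needed to apply \cref{T_1/2}.
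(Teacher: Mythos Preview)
Your proof is correct and follows essentially the same route as the paper: essential surjectivity via $\overline{B(L)}$ using the proof of \cref{thm: essentially surj} together with \cref{T_1/2}, and faithfulness as an immediate consequence of \cref{thm: O faithful for T0} since $\MT_{1/2}\subseteq\MT_0$.
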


\begin{proof}
For each frame $L$, we have $\O\left(\overline{B(L)}\right)=L$ by the proof of \cref{thm: essentially surj}. By \cref{T_1/2}, $\overline{B(L)}$ is a $T_{1/2}$-algebra. Thus, the restriction $\O:\MT_{1/2} \to \Frm$  is essentially surjective. That it is faithful follows from \cref{thm: O faithful for T0} since each $T_{1/2}$-algebra is a $T_0$-algebra.
 \end{proof}

\begin{theorem} \label{thm: when is M spatial}
        Let $M$ be a sober $T_{1/2}$-algebra. Then $M$ is spatial iff $\O(M)$ is spatial.
    \end{theorem}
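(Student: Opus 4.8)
The plan is to prove the two implications separately. One direction is essentially free: if $M$ is spatial, then by \cref{SMT implies SFrm} (applied via the fact that a spatial sober $T_{1/2}$-algebra is in particular spatial) we get $\O(M) \in \SFrm$, i.e.\ $\O(M)$ is spatial. So the content is in the converse: assuming $M$ is a sober $T_{1/2}$-algebra with $\O(M)$ spatial, show $M$ is atomic.

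For the converse, the natural route is to use the structural characterization from \cref{T_1/2}: since $M$ is a $T_{1/2}$-algebra, $M \cong \overline{B(\O(M))}$, the MacNeille completion of the Boolean envelope of $L := \O(M)$. Now combine two facts. First, since $L$ is a spatial frame, $L \cong \Omega(\pt(L))$, so $L$ is the frame of opens of the sober space $Y := \pt(L)$; under this identification $B(L)$ is the Boolean subalgebra of $\P(Y)$ generated by the opens, namely the Boolean algebra of \emph{locally closed-generated} (constructible-type) sets, and the MacNeille completion $\overline{B(L)}$ embeds into $\P(Y)$ as a complete subalgebra. Second, sobriety of $M$ lets me identify the atoms of $M$ with points: by \cref{lem: Sober} (or directly from \cref{rem: sober implies theta homeo}), $\vartheta : \at(M) \to \pt(\O(M)) = Y$ is a homeomorphism, so $\at(M)$ is in bijection with $Y$. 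Thus I want to show that $\overline{B(L)}$, sitting inside $\P(Y)$, is actually \emph{all} of $\P(Y)$ — equivalently, that every singleton $\{y\}$ lies in $\overline{B(L)}$ — which would force $M$ to be atomic with atom set $Y$.

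The key step, then, is: in a sober space $Y$ whose topology is spatial (automatic), every singleton $\{y\}$ is a meet (in $\P(Y)$) of elements of $B(L)$. Here one uses that $\{y\}$ is the intersection of $\overline{\{y\}}$ (a closed set, hence in $B(L)$) with $\bigcap\{U \in L \mid y \in U\}$ (a saturated set, an intersection of opens). Each such $U$ is in $B(L)$, and the closed set $\overline{\{y\}}$ is in $B(L)$, so $\{y\} = \overline{\{y\}} \cap \bigcap\{U : y \in U\}$ is a meet of elements of $B(L)$ in $\P(Y)$ — provided this intersection really collapses to the singleton, which is exactly the $T_0$ property of $Y$, and $Y = \pt(L)$ is always $T_0$. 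Since $\overline{B(L)}$ is a \emph{complete} subalgebra of $\P(Y)$ (this is where completeness of the MacNeille completion is used, together with the fact that the embedding $\overline{B(L)} \hookrightarrow \P(Y)$ preserves arbitrary meets — one should check this, e.g.\ via the description of $\overline{B(L)}$ as cuts), the meet $\bigwedge\{U : y \in U\} \wedge \overline{\{y\}}$ computed in $\overline{B(L)}$ agrees with the intersection in $\P(Y)$, so $\{y\} \in \overline{B(L)} \cong M$. Hence every singleton corresponds to an element of $M$; since singletons are atoms of $\P(Y)$ and the embedding reflects the order, these are atoms of $M$, and they join up to the top (as $Y = \bigcup_{y} \{y\}$ and the embedding preserves joins), so $M$ is atomic.

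The main obstacle I anticipate is the compatibility of the MacNeille completion with the concrete representation: one must justify carefully that, when $L = \Omega(Y)$ for $Y$ sober, the canonical complete embedding $\overline{B(L)} \hookrightarrow \P(Y)$ (extending $B(L) \hookrightarrow \P(Y)$) preserves \emph{arbitrary meets}, so that the singleton, which is a meet of $B(L)$-elements in $\P(Y)$, is genuinely realized as an element of $\overline{B(L)}$ and not merely of some larger completion. This is standard MacNeille-completion bookkeeping (the completion is characterized by being meet- and join-dense, and a complete embedding into any complete algebra containing $B(L)$ densely is unique), but it is the one place where care is needed. An alternative, perhaps cleaner, packaging avoids this: observe directly that for a sober $T_{1/2}$-algebra $M$, using \cref{rem: sober implies theta homeo} to get $Y = \pt(\O(M)) \cong \at(M)$ and then checking that $\eta : M \to \P(\at(M))$ is injective by showing each element of $M$, being a join of locally closed elements $b \wedge c$ with $b \in \O(M)$, $c \in \C(M)$, is determined by its atoms — which reduces to showing $\O(M)$ and $\C(M)$ separate atoms well enough, and this follows because on $\O(M)$, $\eta$ corresponds under $\vartheta$ to $\zeta$, which is injective precisely because $L$ is spatial. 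That would sidestep the MacNeille subtlety by working with \cref{preserves interior}(1) directly.
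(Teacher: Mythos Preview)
Your second (``alternative'') approach is essentially the paper's argument, and is the right one; but you've buried it and stated the key step imprecisely. The paper's proof is entirely elementary: given $0 \neq a \in M$, use $T_{1/2}$ to find $u,v \in \O(M)$ with $0 \neq u \wedge \neg v \leq a$; then $u \nleq v$, so spatiality of $\O(M)$ gives a completely prime filter $p$ with $u \in p$, $v \notin p$; sobriety (via \cref{rem: sober implies theta homeo}) gives $p = \vartheta(x)$ for some atom $x$, whence $x \leq u$, $x \nleq v$, so $x \leq u \wedge \neg v \leq a$. Your phrase ``$\O(M)$ and $\C(M)$ separate atoms well enough'' mis-describes this: the issue is not separating atoms but \emph{producing} one below a given nonzero locally closed element, and that is exactly what spatiality of $\O(M)$ plus surjectivity of $\vartheta$ delivers.

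Your main (first) approach has a genuine gap that is not ``standard MacNeille bookkeeping.'' There is in general no complete Boolean embedding $\overline{B(L)} \hookrightarrow \P(Y)$ extending $B(L) \hookrightarrow \P(Y)$: such an embedding exists iff $B(L)$ is join- and meet-dense in $\P(Y)$, which fails precisely when $Y = \pt(L)$ is not a $T_{1/2}$-space. Concretely, take $L$ to be the frame of \cref{ex: subfit not T1} (cofinite topology on $\mathbb N$); then $Y = \mathbb N \cup \{\omega\}$, but $B(L)$ is the finite--cofinite algebra and $\overline{B(L)} \cong \P(\mathbb N)$, which has strictly fewer atoms than $\P(Y)$. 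So the meet $\overline{\{y\}} \wedge \bigwedge\{U : y \in U\}$ computed in $\overline{B(L)}$ need not coincide with the intersection in $\P(Y)$, and your singleton argument breaks down. The hypothesis that $M$ itself is sober is what rescues the situation, but at that point you are using sobriety to match atoms with points directly---which is exactly the elementary argument, with the MacNeille machinery adding nothing.
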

    
    \begin{proof}
    Clearly if $M$ is spatial, then so is $\O(M)$ by \cref{SMT implies SFrm}. Conversely, 
        suppose that $\O(M)$ is spatial. Let $0 \neq a \in M$. Since $M$ is a $T_{1/2}$-algebra, $a = \bigvee S$ for some $S \subseteq \LC(M)$. Therefore, $S$ must contain a nonzero element. Thus, there are $u,v \in \O(M)$ such that $0\neq u \wedge \neg v \leq a$. This implies that $u \nleq v$. Since $\O(M)$ is spatial, there is a completely prime filter $p$ of $\O(M)$ such that $u \in p$ and $v \not \in p$. Because $M$ is sober, $p = \upset x \cap \O(M)$ for some $x \in \at(M)$ by \cref{rem: sober implies theta homeo}. Therefore, $x \leq u$ and $x \leq \neg v$. Thus, $x\le a$, and hence $M$ is spatial.
    \end{proof}

\begin{remark} \label{rem: spatiality of M vs O(M)}
It is unclear whether being a $T_{1/2}$-algebra can be dropped from the assumption of \cref{thm: when is M spatial}.
\end{remark}

We now turn our attention to $T_1$-algebras.

\begin{definition} \label{def: T1}
Let $M\in\MT$. We call $M$ a {\em $T_1$-algebra} if $\C(M) $ join-generates $M$. 
\end{definition}

\begin{remark}
Equivalently, $M$ is a $T_1$-algebra provided $\O(M)$ meet-generates $M$.
\end{remark} 

Since closed elements are locally closed, every $T_1$-algebra is a $T_{1/2}$-algebra. Let $\MT_1$ be the full subcategory of $\MT_{1/2}$ consisting of $MT_1$-algebras. \cref{nonspatial}  shows that there are $T_1$-algebras that are not spatial. Let $\SMT_1 = \MT_1 \cap \SMT$ and let $\Top_1$ be the full subcategory of $\Top_{1/2}$ consisting of $T_1$-spaces.

\begin{lemma} \label{lem: T1}
Let $X \in \Top$ and $M \in \MT$. 
\begin{enumerate}
\item $X \in \Top_1$ iff $ (\P(X),\int) \in \MT_1$.
\item $M \in \MT_1$ implies $\at(M) \in \Top_1$.
\end{enumerate}
\end{lemma}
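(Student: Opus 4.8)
The plan is to mirror the proofs of \cref{lem: Top0,lem: Td spaces}, using the characterizations of closure in powerset algebras. For part (1), recall that $X$ is a $T_1$-space iff every singleton $\{x\}$ is closed in $X$, equivalently iff $\{x\}\in\C(\P(X),\int)$ for each $x\in X$. Since every $A\subseteq X$ is a union of singletons, the condition that $\C(\P(X),\int)$ join-generates $\P(X)$ is equivalent to each singleton being a join (union) of closed sets; but a singleton is join-irreducible in $\P(X)$, so this happens iff each singleton is itself closed, i.e.\ iff $X\in\Top_1$. Conversely, if $\C(\P(X),\int)$ does not join-generate, then some $A\subseteq X$ is not a union of closed sets, forcing some $x\in A$ with $\{x\}$ not closed, so $X\notin\Top_1$. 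This gives the biconditional in (1).

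For part (2), let $M\in\MT_1$ and take $x\in\at(M)$. Since $\C(M)$ join-generates $M$, write $x=\bigvee S$ for some $S\subseteq\C(M)$. As $x$ is an atom, it is join-irreducible in $M$, so $x=c$ for some $c\in S\subseteq\C(M)$; hence $x\in\C(M)$. Then $\eta(x)=\{x\}$ is a closed subset of $\at(M)$ by \cref{rem: closed sets}, since $\eta$ maps closed elements of $M$ onto the closed sets of $\at(M)$. As $x$ was an arbitrary atom, every singleton of $\at(M)$ is closed, so $\at(M)\in\Top_1$.

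I do not expect a serious obstacle here: both parts are short and parallel to the already-established $T_0$ and $T_{1/2}$ cases. The only point that needs a moment's care is the use of join-irreducibility of atoms/singletons to pass from ``is a join of closed elements'' to ``is a closed element''; this is the one place where the argument for $T_1$ genuinely differs from (and is in fact simpler than) the arguments for $T_0$ and $T_{1/2}$, where locally closed or weakly locally closed elements need not be join-irreducible and one instead exploits that each subset of $X$ is a \emph{union of singletons} to reduce to the singleton case.
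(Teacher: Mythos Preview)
Your proposal is correct and follows essentially the same approach as the paper: for (1) you reduce to the statement that each singleton is closed by using that every subset is a union of singletons, and for (2) you observe that each atom, being a join of closed elements, must itself be closed and hence $\eta(x)=\{x\}$ is closed. The paper's proof is terser (it simply asserts ``since $M$ is a $T_1$-algebra, $x\in\C(M)$'' without spelling out the atom/join-irreducibility step), but the content is identical; one small terminological nit is that you should say atoms are \emph{completely} join-irreducible, since the join $\bigvee S$ may be infinite.
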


\begin{proof}
(1) We have that $X$ is a $T_1$-space iff $\{x\}$ is closed for each $x\in X$. Therefore, since each $A \subseteq X$ is a union of singletons, $X\in\Top_1$ iff $ (\P(X),\int)\in\MT_1$.

(2) Let $x\in\at(M)$. Since $M$ is a $T_1$-algebra, $x\in\C(M)$. Therefore, $\{x\} = \eta(x)$ is closed. Thus, $\at(M)$ is a $T_1$-space.
\end{proof}

\begin{remark}
	Suppose $M$ is a $T_1$-algebra. Then each $a \in  M $ can be written as $a = \bigwedge S$ for some $S \subseteq \O(M)$. Therefore, $M$ is generated by $\O(M)$ as a  complete lattice. The converse is also true if $M$ is spatial because then $M$ is completely distributive. Thus, a spatial MT-algebra $M$ is a $T_1$-algebra iff $\O(M)$ generates $M$ as a complete lattice (cf.~\cite[p.~964]{RaneyAlgebra}).
\end{remark}

As an immediate consequence of \cref{thm: duality for Td,lem: T1}, we obtain:

\begin{theorem} \label{thm: duality for T1}
    The dual adjunction between $\Top_{1/2}$ and $\MT_{1/2}$ restricts to a dual adjunction between $\Top_1$ and $\MT_1$, and the dual equivalence between $\Top_{1/2}$ and $\SMT_{1/2}$ restricts to a dual equivalence between $\Top_1$ and $\SMT_1$.
\end{theorem}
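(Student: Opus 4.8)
The statement is \cref{thm: duality for T1}, which follows the established template of \cref{thm: MT0 and Top0,thm: duality for Td}: I already have the dual adjunction between $\Top_{1/2}$ and $\MT_{1/2}$ and the dual equivalence between $\Top_{1/2}$ and $\SMT_{1/2}$, and I must check that both restrict appropriately to the $T_1$ level. The plan is to invoke \cref{lem: T1} together with the fact that the units of the ambient adjunction behave well with respect to these subcategories. Concretely, the two functors $\P : \Top \to \MT$ and $\at : \MT \to \Top$ must be shown to preserve the $T_1$ condition in each direction.

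First I would verify that $\P$ sends $\Top_1$ into $\MT_1$: this is exactly \cref{lem: T1}(1), since $X \in \Top_1$ iff $(\P(X), \int) \in \MT_1$. Next I would verify that $\at$ sends $\MT_1$ into $\Top_1$: this is \cref{lem: T1}(2), $M \in \MT_1$ implies $\at(M) \in \Top_1$. Since $\MT_1 \subseteq \MT_{1/2}$ by the observation (preceding \cref{lem: T1}) that closed elements are locally closed, and $\Top_1 \subseteq \Top_{1/2}$ by definition, the unit $\varepsilon : X \to \at(\P(X))$ for $X \in \Top_1$ and the unit $\eta : M \to \P(\at(M))$ for $M \in \MT_1$ are simply the restrictions of the already-established units of the $\Top_{1/2}$--$\MT_{1/2}$ adjunction, which lie in the right categories by the two containments just noted. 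Hence the dual adjunction between $\Top_{1/2}$ and $\MT_{1/2}$ restricts to a dual adjunction between $\Top_1$ and $\MT_1$.

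For the equivalence part, recall from \cref{thm: duality for Td} that the dual equivalence between $\Top_{1/2}$ and $\SMT_{1/2}$ is witnessed by $\varepsilon$ being a homeomorphism (for every space, by \cref{lem: epsilon homo}) and $\eta$ being an MT-isomorphism precisely on spatial algebras (by \cref{preserves interior}). Restricting to $\Top_1$ and $\SMT_1 = \MT_1 \cap \SMT$: for $X \in \Top_1$ the object $\P(X)$ is a spatial $T_1$-algebra and $\varepsilon$ is a homeomorphism onto $\at(\P(X)) \in \Top_1$; for $M \in \SMT_1$ we have $M$ spatial so $\eta : M \to \P(\at(M))$ is an MT-isomorphism, and $\at(M) \in \Top_1$ by \cref{lem: T1}(2). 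Thus the equivalence restricts as claimed.

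The argument is essentially bookkeeping — no step presents a genuine obstacle, since all the analytic content is already packaged in \cref{lem: T1} and in the inclusions $\MT_1 \subseteq \MT_{1/2}$, $\Top_1 \subseteq \Top_{1/2}$. The only point requiring a moment's care is confirming that $\eta$ restricted to a $T_1$-algebra still lands in the powerset of a $T_1$-space (so that the restricted adjunction is well-typed on both object and morphism level), but this is immediate from \cref{lem: T1}(2). I would therefore write the proof as a one-line appeal: "This is an immediate consequence of \cref{thm: duality for Td,lem: T1}," mirroring the phrasing used for \cref{thm: duality for Td} itself.
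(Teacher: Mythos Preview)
Your proposal is correct and matches the paper's approach exactly: the paper states the theorem as an immediate consequence of \cref{thm: duality for Td,lem: T1}, which is precisely the one-line appeal you arrive at. Your expanded bookkeeping is sound but unnecessary for the write-up.
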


We next compare our $T_1$-separation to subfitness in pointfree topology. 
We recall (see, e.g., \cite[p.~73]{PicadoPultr2012}) that a frame $L$ is \emph{subfit} if for all $a,b\in L$ we have 
\[
a \nleq b \implies  \exists c \in L : a \vee c = 1 \neq b \vee c.
\]

\begin{proposition} \label{subfit}
 If $M\in\MT_1$, then $\O(M)$ is subfit.
\end{proposition}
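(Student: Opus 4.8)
The plan is to unwind the definition of subfitness directly, using the fact that in a $T_1$-algebra $\O(M)$ meet-generates $M$ (equivalently $\C(M)$ join-generates $M$). So suppose $a,b\in\O(M)$ with $a\nleq b$. I want to produce $c\in\O(M)$ with $a\vee c=1$ and $b\vee c\neq 1$. The natural candidate is obtained by first working in the Boolean algebra $M$: since $a\nleq b$, the element $a\wedge\neg b$ is nonzero. Because $M$ is a $T_1$-algebra, every element of $M$ — in particular $\neg(a\wedge\neg b)=\neg a\vee b$ — is a meet of open elements. Hence there is some open $c\in\O(M)$ with $\neg a\vee b\leq c$ but $c\neq 1$ (such a $c$ must exist, otherwise the only open element above $\neg a\vee b$ would be $1$, forcing $\neg a\vee b=1$, i.e. $a\leq b$, a contradiction).

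With this $c$ in hand the two required inequalities are quick. From $\neg a\vee b\leq c$ we get $\neg a\leq c$, so $a\vee c=1$. For the other condition, note $\neg a\vee b\leq c$ also gives $b\leq c$, hence $b\vee c=c\neq 1$. That completes the verification, and $c\in\O(M)$ as required, so $\O(M)$ is subfit.

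The only point needing care — and the step I'd flag as the main obstacle, though it is mild — is the existence of a \emph{proper} open $c$ above $\neg a\vee b$. This is exactly where the $T_1$ hypothesis (rather than merely $T_{1/2}$ or $T_0$) is used: $\O(M)$ meet-generates $M$, so $\neg a\vee b=\bigwedge\{c\in\O(M)\mid \neg a\vee b\leq c\}$; if every such $c$ equalled $1$ this meet would be $1$, contradicting $a\wedge\neg b\neq 0$. One should double-check that $a\wedge\neg b\neq 0$ follows from $a\nleq b$ in the Boolean algebra $M$ — it does, since $a\leq b\iff a\wedge\neg b=0$. No completeness or distributivity beyond what $M$ already carries is needed.
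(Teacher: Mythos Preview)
Your proof is correct and follows essentially the same approach as the paper's: both use the $T_1$ hypothesis to express a suitable element as a meet of opens and then extract a proper open witness $c$. The only cosmetic difference is that you apply meet-generation directly to $\neg a\vee b$, whereas the paper applies it to $\neg a$ and then distributes $b\vee(-)$ over the meet (valid since $M$ is a complete Boolean algebra); your version is marginally cleaner in that it sidesteps that distributivity step.
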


\begin{proof}
 Let $a,b \in \O(M)$ with $a \nleq b$. Since $M$ is a $T_1$-algebra, $\neg a = \bigwedge S$ for some $S \subseteq \O(M)$. If $b \vee c=1$ for all $c \in S$, then $\neg a \vee b = (\bigwedge S) \vee b = \bigwedge \{ c \vee b \mid c \in S \} = 1$, so $a \leq b$, a contradiction. Therefore, there is $c \in S$ such that $b \vee c \neq 1$. On the other hand, $a \vee c \ge a \vee \neg a=1$. Thus, $\O(M)$ is subfit.	
\end{proof}

The following example shows that $\O(M)$ being subfit does not imply that $M$ is a $T_1$-algebra.

\begin{example} \label{ex: subfit not T1}
Let $Y=(\N,\tau)$, where $\tau$ is the cofinite topology (see \cref{exa: not a functor}), and let $X$ be the soberification of $Y$. Then $X = (\N \cup\{\omega\},\tau_{\omega})$, where $\tau_{\omega}=\{U \cup \{\omega\}  \mid \ U \subseteq \N \text{ is cofinite} \} \cup \{\varnothing\}$ (see, e.g., \cite[p.~6]{PicadoPultr2012}). Clearly $X$  
    is not $T_1$ because $\{\omega\}$ is not closed. Therefore, $(\P(X),\int)\notin\MT_1$. We show that $\Omega(X)$ is subfit. Let $U,V \in \Omega(X)$ with $U \nsubseteq V$. Then there is $n \in U \setminus V$. Let $W=X\setminus\{n\}$. So $W\in\Omega(X)$, $U \cup W = X$, but $V \cup W\neq X$. Thus, $\Omega(X)$ is subfit.
\end{example}

Nevertheless, $\O(M)$ being subfit makes every $T_{1/2}$-algebra $M$ a $T_1$-algebra. This is reminiscent of the situation in topological spaces (see, e.g., \cite[p.~73]{PicadoPultr2012}).

\begin{theorem}\label{Sfit}
Let $M \in \MT$. Then $M$ is $T_1$ iff $M$ is $T_{1/2}$ and $\O(M)$ is subfit.
\end{theorem}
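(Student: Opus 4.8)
The plan is to prove the two implications separately. The direction ``$M$ is $T_1$ $\Rightarrow$ $M$ is $T_{1/2}$ and $\O(M)$ is subfit'' is essentially done: every $T_1$-algebra is a $T_{1/2}$-algebra since closed elements are locally closed, and $\O(M)$ is subfit by \cref{subfit}. So the substance is the converse: assuming $M$ is a $T_{1/2}$-algebra with $\O(M)$ subfit, show $\C(M)$ join-generates $M$, i.e.\ every element of $M$ is a join of closed elements.

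First I would reduce, using the $T_{1/2}$ hypothesis, to showing that every \emph{locally closed} element is a join of closed elements; since $\LC(M)$ join-generates $M$ and joins of joins are joins, this suffices. A locally closed element has the form $u \wedge c$ with $u \in \O(M)$ and $c \in \C(M)$, and since $\C(M)$ is closed under finite (indeed arbitrary) meets, it is enough to show that $u \wedge c$ is a join of closed elements, which in turn reduces — intersecting with the fixed closed element $c$ and noting $c$ is already closed — to the key case: every open element $u$, or more precisely every element of the form $u \wedge c$, is below a join of closed elements lying beneath it. The cleanest formulation to aim at is: for $u \in \O(M)$, $u = \bigvee\{d \in \C(M) \mid d \leq u\}$.

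The heart of the argument will be extracting closed elements beneath $u$ from subfitness. Suppose $u \neq \bigvee\{d \in \C(M) \mid d \leq u\}$; call the right-hand join $a$, so $a < u$ (note $a \leq u$ automatically, and $a \in \C(M)$ would be too strong to expect, but $a$ is co-saturated). Then $\neg a \not\leq \neg u$ is false — rather I want to produce a witness that the gap between $a$ and $u$ is detected by an open element. Here is where I would use that $M$ is $T_{1/2}$ \emph{again}, applied to $u \wedge \neg a$: it is a nonzero element, hence a join of locally closed elements, so there exist $v \in \O(M)$ and $e \in \C(M)$ with $0 \neq v \wedge e \leq u \wedge \neg a$. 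From $v \wedge e \leq u$ I get $v \wedge e \in \C(M)$ only if $v$ were closed, which it need not be — so instead I look at the closed element $e \wedge \neg(\text{something})$. The right move: since $v \wedge e \leq u$ and $v \in \O(M)$ with $v \wedge e \neq 0$, subfitness of $\O(M)$ applied to $v \not\leq \neg e'$ where $e' = \neg(\text{the open complement})$... this is the delicate point. The main obstacle is precisely to convert the open witness $v$ into a \emph{closed} element still lying under $u$ but not under $a$, contradicting the definition of $a$ as the join of \emph{all} closed elements below $u$. I expect the resolution to go: subfitness gives, for $v \wedge e \not\leq \neg(v\wedge e)\vee(\text{stuff})$, a $c' \in \O(M)$ with $v \vee c' = 1 \neq (\text{comparison})\vee c'$; then $\neg c'$ is a closed element, $\neg c' \leq v \leq \neg(v\wedge e)^c$... and combining with $e$ one lands a closed element $\neg c' \wedge e \leq u$ which is not $\leq a$, the contradiction. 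I would expect to need the identity $\diamond$ as left adjoint of $\C(M) \hookrightarrow M$ (\cref{square and diamond as adjoints-2}) to pass from arbitrary elements to their closures cleanly, and to use that $\C(M)$ is a co-frame so that the relevant meets distribute. Once the open case is settled, reassembling gives $u \wedge c = \bigvee\{d \wedge c \mid d \in \C(M),\, d \leq u\}$, a join of closed elements, completing the proof.
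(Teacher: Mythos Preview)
Your plan uses the same key mechanism as the paper --- apply subfitness to the open/closed pair supplied by $T_{1/2}$ to manufacture a nonzero closed element beneath it --- but you have wrapped it in unnecessary layers and left the decisive step vague. The paper's converse is a single stroke: in a complete Boolean algebra, $\C(M)$ join-generates $M$ iff every nonzero element has a nonzero closed element beneath it. So given $a \neq 0$, take (by $T_{1/2}$) $b \in \O(M)$ and $c \in \C(M)$ with $0 \neq b \wedge c \leq a$; then $b \nleq \neg c$ with $\neg c \in \O(M)$, and subfitness gives $u \in \O(M)$ with $b \vee u = 1 \neq \neg c \vee u$, whence $\neg u \wedge c$ is a nonzero closed element $\leq b \wedge c \leq a$. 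That is the whole proof.

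Your reduction to open elements, the contradiction setup with $a = \bigvee\{d \in \C(M) \mid d \leq u\}$, and the \emph{second} application of $T_{1/2}$ to $u \wedge \neg a$ simply bring you back to the same configuration: a nonzero locally closed element $v \wedge e$ awaiting subfitness. The ``(comparison)'' you left as a placeholder is just $\neg e$ --- that is the second open element, and subfitness on $v \nleq \neg e$ yields $c' \in \O(M)$ with $v \vee c' = 1 \neq \neg e \vee c'$, so $\neg c' \wedge e$ is nonzero, closed, and $\leq v \wedge e \leq u \wedge \neg a$; this gives your contradiction. So your outline does close, but the detour is redundant, and the tools you anticipated needing (the adjointness of $\diamond$, co-frame distributivity in $\C(M)$) are not used at all.
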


\begin{proof}

	First suppose that $M$ is $T_1$. Then $M$ is $T_{1/2}$ and it follows from \cref{subfit} that $\O(M)$ is subfit.	Conversely, suppose that $M$ is $T_{1/2}$ and $\O(M)$ is subfit. Let $a \neq 0$. Since $M$ is $T_{1/2}$, there exist $b \in \O(M)$ and $c \in \C(M)$ such that $0 \neq b \wedge c \leq a$. Therefore, $b \nleq \neg c$. By subfitness, there is $u \in \O(M)$ such that $b \vee u = 1$ and $\neg c \vee u \neq 1$. Thus, $0 \neq \neg u \wedge c \leq b \wedge c \leq a$ and $\neg u \wedge c \in \C(M)$. Consequently, $M$ is $T_1$.
\end{proof}

Let $\Frm_{\bf sfit}$ be the full subcategory of $\Frm$ consisting of subfit frames.

\begin{proposition}\label{essentially surjective MT1}
The restriction $\O:\MT_1\to\Frm_{\bf sfit}$ is well defined, essentially surjective, and faithful.
\end{proposition}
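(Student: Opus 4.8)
The plan is to verify the three properties in turn, reusing the work already done for $\MT_{1/2}$ and for the subfitness comparison. \textbf{Well-definedness.} This is immediate from \cref{subfit}: if $M\in\MT_1$, then $\O(M)$ is subfit, so $\O$ restricted to $\MT_1$ lands in $\Frm_{\bf sfit}$. Morphisms are handled by \cref{lemma: restriction mt is frame}, which already shows that an MT-morphism restricts to a frame homomorphism; no extra condition is needed since $\Frm_{\bf sfit}$ is a full subcategory.

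\textbf{Faithfulness.} Every $T_1$-algebra is a $T_{1/2}$-algebra, hence a $T_0$-algebra, so faithfulness is inherited directly from \cref{thm: O faithful for T0}: if $h,g:M\to N$ are MT-morphisms with $h|_{\O(M)}=g|_{\O(M)}$ and $M\in\MT_0$, then $h=g$.

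\textbf{Essential surjectivity.} Given a subfit frame $L$, I want an MT-algebra $M\in\MT_1$ with $\O(M)\cong L$. The natural candidate is $\overline{B(L)}$, which by \cref{lemma: macneille is MT} is an MT-algebra and by the proof of \cref{thm: essentially surj} satisfies $\O(\overline{B(L)})=L$. By \cref{T_1/2}, $\overline{B(L)}$ is a $T_{1/2}$-algebra, and since $\O(\overline{B(L)})=L$ is subfit, \cref{Sfit} gives that $\overline{B(L)}$ is in fact a $T_1$-algebra. Hence $\O:\MT_1\to\Frm_{\bf sfit}$ is essentially surjective.

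The main obstacle here is essentially conceptual rather than technical: one must recognize that \cref{Sfit} is exactly the bridge that upgrades the $T_{1/2}$-witness $\overline{B(L)}$ to a $T_1$-witness, so that the construction used for essential surjectivity of $\O$ on $\MT_{1/2}$ transfers verbatim to the subfit setting. All three parts are then short once this is seen; no genuinely new argument is required beyond assembling \cref{subfit,thm: O faithful for T0,T_1/2,Sfit,thm: essentially surj,lemma: macneille is MT}.
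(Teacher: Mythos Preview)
Your proof is correct and follows essentially the same approach as the paper: well-definedness via \cref{subfit}, faithfulness via \cref{thm: O faithful for T0}, and essential surjectivity by taking $\overline{B(L)}$ and using \cref{T_1/2} together with \cref{Sfit} to upgrade from $T_{1/2}$ to $T_1$. The only difference is that you spell out a few extra justifications (e.g., \cref{lemma: restriction mt is frame} for morphisms), which the paper leaves implicit.
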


\begin{proof}
That the restriction $\O:\MT_1\to\Frm_{\bf sfit}$ is well defined follows from \cref{subfit}, and that it is faithful follows from \cref{thm: O faithful for T0}. Finally, to see that it is essentially surjective, let $L \in \Frm_{\bf sfit}$. By the proof of \cref{thm: essentially surj}, $\O\left(\overline{B(L)}\right)=L$. By \cref{T_1/2}, $\overline{B(L)}$ is a $T_{1/2}$-algebra, so $\overline{B(L)}$ is a $T_1$-algebra by \cref{Sfit}.
\end{proof}

\begin{remark} \label{rem: O not full on MT1}
\cref{exa: not a functor} shows that the restriction $\O:\MT_1\to\Frm_{\bf sfit}$ is not full. 
\end{remark}

We conclude this section by pointing out that the notion of a fit frame, which is a hereditary variant of subfitness in pointfree topology, is not comparable to the $T_1$-separation axiom for MT-algebras. We recall (see, e.g., \cite[p.~74]{PicadoPultr2012}) that a frame $L$ is {\em fit} if for every $a,b \in L$ with $a \nleq b$, there is $c\in L$ such that $a \vee c=1$ and $c\to b \nleq b$. A simple example of an MT-algebra $M$ such that $\O(M)$ is fit but $M$ is not a $T_1$-algebra is given by the four-element Boolean algebra with simple $\square$. An example of a spatial $T_1$-algebra $M$ such that $\O(M)$ is not fit is given by $M=\P(Y)$, where $Y$ is the space considered in \cref {ex: subfit not T1} (see \cite[p.~31]{PicadoPultr2021}).

\section{Hausdorff and regular MT-algebras}\label{sec:7}

In this section we extend the concepts of Hausdorff and regular spaces to MT-algebras. We
show that if $M$ is a Hausdorff MT-algebra, then $\O(M)$ is a Hausdorff frame, but that the converse
is not true in general. On the other hand, we show that a $T_1$-algebra $M$ is regular iff $\O(M)$ is
a regular frame. We also generalize the well-known result that every Hausdorff space is sober to
MT-algebras and restrict the dual adjunctions and dual equivalences of the previous section to the
setting of Hausdorff and regular MT-algebras.

To define Hausdorff MT-algebras, we generalize the well-known notions of regular open and regular closed sets to the setting of MT-algebras.  

\begin{definition}
Let $M \in \MT$. We call $b\in \O(M)$ {\em regular open} if $b=\square\diamond b$, and $c \in \C(M)$ {\em regular closed} if $c=\diamond\square c$. Let $\RO(M)$ and $\RC(M)$ be the collections of regular open and regular closed elements of $M$, respectively.
\end{definition}

It is well known (see, e.g., \cite[p.~ 66]{HalmosBA}) that the set of regular open (resp.~regular closed) subsets of a topological space forms a complete Boolean algebra. It is straightforward to verify that the same holds for MT-algebras. We thus have:

\begin{proposition}
If $M$ is an MT-algebra, then $\RO(M)$ and $\RC(M)$ are complete Boolean algebras. Moreover, $b\in\RO(M)$ iff $\neg b\in\RC(M)$.
\end{proposition}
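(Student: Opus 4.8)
The statement has two parts: that $\RO(M)$ and $\RC(M)$ are complete Boolean algebras, and that complementation $b \mapsto \neg b$ restricts to a bijection between $\RO(M)$ and $\RC(M)$. I would handle the second part first, since it is the quicker of the two and it lets us transfer any structural fact from one side to the other. The key computation is that for $b \in \O(M)$ we have $b = \square\diamond b$ iff $\neg b = \diamond\square\neg b$; this follows from the De Morgan identities $\neg\square x = \diamond\neg x$ and $\neg\diamond x = \square\neg x$ (valid in any interior algebra), applied twice: $\neg\square\diamond b = \diamond\neg\diamond b = \diamond\square\neg b$. Since $\neg$ is an involution on the Boolean algebra $M$, it is then automatic that it gives an order-reversing bijection $\RO(M) \leftrightarrow \RC(M)$, and hence an anti-isomorphism of posets. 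So once we prove, say, that $\RO(M)$ is a complete Boolean algebra, the same follows for $\RC(M)$ by dualizing.

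\textbf{Establishing the complete Boolean algebra structure on $\RO(M)$.} The standard approach (mirroring the case of regular open sets of a space, as in \cite[p.~66]{HalmosBA}) is to show that $\RO(M)$ is the set of fixpoints of the closure operator $j = \square\diamond$ on the frame $\O(M)$, i.e. that $j$ is a nucleus on $\O(M)$. First I would record the basic monotonicity and idempotency facts: $\square\diamond$ is monotone, inflationary on open elements ($b \le \square\diamond b$ for $b \in \O(M)$, since $b \le \diamond b$ and $b$ open gives $b = \square b \le \square\diamond b$), and idempotent ($\square\diamond\square\diamond b = \square\diamond b$, which follows from $\diamond\square\diamond b = \diamond b$ — a standard $\mathsf{S4}$ identity). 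Then I would check that $j$ preserves finite meets of open elements, using that $\square$ preserves meets and the inequality $\diamond(x\wedge y) \le \diamond x \wedge \diamond y$ together with the reverse direction obtained via the adjunction; this is the one spot requiring a small argument rather than a one-line identity. Granting that $j$ is a nucleus, the fixpoint set $\RO(M) = \{b \in \O(M) \mid b = jb\}$ is a frame with meet inherited from $\O(M)$ and join given by $b \vee_j c = j(b \vee c)$, and the bottom and top are $0$ and $1$.

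\textbf{From nucleus fixpoints to a complete Boolean algebra.} Finally I would verify that this frame of fixpoints is actually Boolean, by exhibiting the complement of $b \in \RO(M)$ as $b^{\perp} := \square\neg b = \neg\diamond b$ (the ``pseudocomplement'' in $\O(M)$, which is always regular open) and checking $b \wedge b^{\perp} = 0$ and $b \vee_j b^{\perp} = j(b \vee \square\neg b) = 1$. The meet is clear since $b^{\perp} \le \neg b$. For the join, one shows $\diamond(b \vee \square\neg b) = 1$: if not, $\neg\diamond(b\vee\square\neg b) = \square\neg b \wedge \square\diamond b$ would be a nonzero open element below both $\square\neg b$ and $\square\diamond b = b$ (the last equality using $b$ regular open), forcing it below $b \wedge \neg b = 0$. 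Then $j(b \vee \square\neg b) = \square\diamond(b \vee \square\neg b) = \square 1 = 1$. This makes $\RO(M)$ a complete pseudocomplemented distributive lattice in which every element equals its double pseudocomplement (as $b = \square\diamond b = (b^{\perp})^{\perp}$), hence a complete Boolean algebra. The analogous statement for $\RC(M)$ then follows by the anti-isomorphism $\neg$ established at the outset. I expect the only mildly delicate point to be the finite-meet-preservation of the nucleus $\square\diamond$ (equivalently, that $\diamond$ distributes over finite meets of \emph{open} elements after applying $\square$); everything else is routine manipulation of the Kuratowski axioms and the Boolean complement.
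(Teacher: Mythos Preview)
Your proposal is correct. The paper does not actually prove this proposition: it cites Halmos for the topological case and asserts that ``it is straightforward to verify that the same holds for MT-algebras,'' then states the proposition without further argument. Your plan supplies precisely the missing verification, via the standard route of showing that $j=\square\diamond$ is a nucleus on $\O(M)$ whose fixpoints constitute $\RO(M)$, and then checking that the pseudocomplement $b^{\perp}=\square\neg b$ is a genuine complement in the quotient frame. The formulas you arrive at (joins computed as $\square\diamond(\bigvee S)$, complement $\square\neg b$) agree with what the paper records in the remark immediately following the proposition, so your argument is exactly the intended ``straightforward verification.'' The one step you flag as delicate---finite-meet preservation of $\square\diamond$ on opens---is indeed the only nontrivial point; it reduces to the $\mathsf{S4}$-valid inequality $\square a \wedge \diamond b \le \diamond(\square a \wedge b)$, which you may want to name explicitly rather than gesture at ``the adjunction.''
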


\begin{remark}
Like for the complete Boolean algebra of regular open subsets of a topological space, for $S\subseteq \RO(M)$, we have $\bigvee_{\RO(M)} S = \s \diamond (\bigvee S)$, $\bigwedge_{\RO(M)} S = \s (\bigwedge S)$, and the complement of $a\in\RO(M)$ is given by $\s(\lnot a)$. The operations on $\RC(M)$ are defined similarly. 
\end{remark}

\begin{definition}
Let $M$ be an MT-algebra.
\begin{enumerate}[ref=\thedefinition(\arabic*)]
\item An element $a\in M$ is \emph{approximated from below by $\RO(M)$} if $a=\bigvee \{\square \diamond b \mid \diamond b \leq a\}$. Let $\GO(M)$ be the collection of such elements of $M$. 
\item An element $a\in M$ is \emph{approximated from above by $\RC(M)$} if $a=\bigwedge \{\diamond \square c \mid a \leq \square c\}$. Let $\GC(M)$ be the collection of such elements of $M$.\label [definition]{def:AC}

\end{enumerate}
\end{definition}

Observe that $\GO(M)\subseteq\O(M)$ and $\GC(M)\subseteq\C(M)$. 

\begin{definition} \label{def: T2}
We call $M\in\MT$ a {\em Hausdorff algebra} or a {\em $T_2$-algebra} if $\GC(M)$ join-generates~$M$. 
\end{definition}

\begin{remark}
Equivalently, $M$
is a Hausdorff algebra provided $\GO(M)$ meet-generates $M$. 
\end{remark}

Since $\GC(M)\subseteq\C(M)$, we have that every Hausdorff algebra is a $T_1$-algebra. Let $\MT_2$ be the full subcategory of $\MT_1$ consisting of Hausdorff algebras. \cref{nonspatial} shows that there are Hausdorff MT-algebras that are not spatial. Let $\SMT_2 = \MT_2 \cap \SMT$ and let $\Top_2$ be the full subcategory of $\Top_1$ consisting of Hausdorff spaces.

\begin{lemma} \label{lem:T_2}
Let $X \in \Top$ and $M\in\MT$. 
\begin{enumerate} [ref=\thelemma(\arabic*)] 
\item $X\in\Top_2$ iff $(\P(X),\int)\in\MT_2$.\label [lemma]{lem: T2}
\item $M\in\MT_2$ implies $\at(M)\in\Top_2$.
\end{enumerate}
\end{lemma}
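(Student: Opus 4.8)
The plan is to prove the two parts of \cref{lem:T_2} in parallel with the analogous lemmas for weaker separation axioms (\cref{lem: Top0}, \cref{lem: Td spaces}, \cref{lem: T1}), since the structure is the same: reduce the defining condition on the MT-algebra to a pointwise condition on singletons, and use that every subset of a space is a union of its singletons. The key preliminary observation, which I would establish first, is that under the isomorphism of interior algebras $(\P(X),\int)$ the regular closed elements $\RC(\P(X),\int)$ are exactly the regular closed subsets of $X$ (i.e.\ sets of the form $\overline{\int A}$), and that a subset $A\subseteq X$ lies in $\GC(\P(X),\int)$ iff $A = \bigcup\{\overline{\int C} \mid A\subseteq \int C\}$ — which unwinds to the statement that $A$ is a union of regular closed sets each contained in $A$.

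For part (1), I would argue as follows. Recall that $X$ is Hausdorff iff for every pair of distinct points $x\ne y$ there are disjoint opens separating them; equivalently (a standard reformulation I would cite or verify in one line) for each $x\in X$ we have $\{x\} = \bigcap\{\overline{U} \mid x\in U\in\Omega(X)\}$, i.e.\ the singleton is the intersection of the closures of its open neighborhoods. Dualizing, $\{x\}$ is a join of regular closed sets lying below it: indeed the complement of $\overline U$ is a regular open set $\int(X\setminus \overline U)$ disjoint from $\{x\}$... I would instead phrase it directly: $X$ is Hausdorff iff each $\{x\}$ belongs to $\GC(\P(X))$ in the sense above. Granting this, since every $A\subseteq X$ is a union of singletons, $\GC(\P(X),\int)$ join-generates $\P(X)$ iff each singleton is in $\GC(\P(X),\int)$ iff $X$ is Hausdorff; that is, $X\in\Top_2$ iff $(\P(X),\int)\in\MT_2$. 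The one point needing care is the verification of the equivalence ``$X$ Hausdorff $\iff$ each $\{x\}$ is a union (equivalently, an intersection of closures of neighborhoods) of regular closed sets below it,'' which is the combinatorial heart of the argument.

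For part (2), let $M\in\MT_2$ and take distinct atoms $x,y\in\at(M)$. Since $\GC(M)$ join-generates $M$ and $x\ne y$ are incomparable, $y\nleq x$, so from $x = \bigvee\{\diamond\square c \mid x\le\square c,\ \diamond\square c\in\GC(M)\}$... more cleanly: $x\in\GC(M)$ forces $x = \bigwedge\{\diamond\square c\mid x\le\square c\}$, hence $\neg x = \bigvee\{\square\diamond b\mid \diamond b\le \neg x\}$ with each $\square\diamond b\in\RO(M)$; since $y\le \neg x$ and $\neg x$ is this join, there is a regular open $b_0=\square\diamond b\le\neg x$ with $y\le b_0$. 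Then $\eta(b_0)$ is an open subset of $\at(M)$ with $y\in\eta(b_0)$ and $x\notin\eta(b_0)$, and moreover $\eta(b_0)$ is regular open in $\at(M)$ because $\eta$ preserves $\square$ and $\diamond$ on... here I must be careful: $\eta$ need not commute with $\square$ and $\diamond$ unless $M$ is atomic, but \cref{rem: closed sets} gives $\overline{\eta(a)}\subseteq\eta(\diamond a)$, which together with monotonicity is enough to push a separation of $x$ from $y$ by a regular open / regular closed pair of $M$ down to a separation by open sets in $\at(M)$, and I would then invoke the standard fact that a $T_1$-space in which any two points are separated by a regular open set is Hausdorff (or directly separate by the two disjoint opens $\eta(b_0)$ and the interior of its complement). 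I expect the main obstacle to be exactly this last transfer: making precise, via \cref{rem: closed sets} and \cref{T_0}/\cref{lem: T1}, that the regular-closed join-generation of $M$ yields genuine Hausdorff separation in $\at(M)$ rather than merely $T_1$ separation, since $\eta$ does not strictly preserve the topological operators in the non-atomic case.
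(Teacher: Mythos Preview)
Your treatment of part (1) is correct and essentially matches the paper: both reduce Hausdorffness of $X$ to the condition $\{x\} = \bigcap\{\overline U \mid x \in U \in \Omega(X)\}$, observe this is precisely $\{x\}\in\GC(\P(X))$, and use that every subset is a union of singletons.

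For part (2), you start correctly---from $\GC(M)$ join-generating $M$ you get that each atom $x$ lies in $\GC(M)$, hence $x = \bigwedge\{\diamond\square c \mid x \le \square c\}$, and so for $y\ne x$ there is $c$ with $x \le \square c$ and $y \nleq \diamond\square c$---but then you take an unnecessary detour. The paper finishes in one line at this point: set $a = \square c$ and $b = \neg\diamond\square c$. Both are open elements of $M$, they are disjoint (since $\square c \le \diamond\square c$), and $x\le a$, $y\le b$. Thus $\eta(a),\eta(b)$ are disjoint open subsets of $\at(M)$ separating $x$ and $y$. No appeal to regular opens in $\at(M)$, no transfer argument, and no need for $\eta$ to commute with $\square$ or $\diamond$: you only use that $\eta$ sends open elements to open sets and preserves finite meets.

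Your dualized route can also be completed, but you stopped short of the key observation: with $b_0 = \square\diamond b$ and $\diamond b \le \neg x$, the companion open is simply $\neg\diamond b \in \O(M)$, which contains $x$ and satisfies $\neg\diamond b \wedge b_0 \le \neg\diamond b \wedge \diamond b = 0$. So the ``obstacle'' you anticipate does not actually arise; the difficulty is an artifact of trying to produce the second open as $\int(\eta(b_0)^c)$ computed \emph{inside} $\at(M)$, rather than as the image under $\eta$ of an open element you already have in $M$.
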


\begin{proof}
(1) Observe that $X$ is a Hausdorff space iff $\{x\}=\bigcap {\{\overline U \mid x \in U \in \Omega(X)\}}$ for each $x \in X$. 
Since $\left \{\overline U \mid x \in U \in \Omega(X)\right \} = \left\{ \overline{\int A} \mid x \in \int A, A \in \P(X) \right\}$, 
 we obtain that $X\in\Top_2$ iff $ (\P(X),\int)\in\MT_2$.

(2) Let $x,y \in \at(M)$ with $x\ne y$. Since $M$ is a Hausdorff algebra, $x = \bigwedge \{ \diamond \square c \mid x \le \square c \}$. Because $y \not \leq x$, there is $c\in M$ such that $x \le \square c$ and $y\not\le \diamond\square c$. Let $a=\square c$ and $b=\neg\diamond\square c$. Then $a,b\in \O(M)$, $a \wedge b=0$, $x\le a$, and $y\le b$. Therefore, $\eta(a),\eta(b)$ are disjoint open subsets of $\at(M)$, $x\in\eta(a)$, and $y \in \eta(b)$. Thus, $\at(M)$ is a Hausdorff space. 
\end{proof}

As an immediate consequence of \cref{thm: duality for T1,lem:T_2}, we obtain:

\begin{theorem} \label{thm: adj and duality for T2}
    The dual adjunction between $\Top_1$ and $\MT_1$ restricts to a dual adjunction between $\Top_2$ and $\MT_2$, and the dual equivalence between $\Top_1$ and $\SMT_1$ restricts to a dual equivalence between $\Top_2$ and $\SMT_2$.
\end{theorem}

As in topological spaces (see, e.g., \cite[p.~15]{PicadoPultr2021}), we have that every Hausdorff MT-algebra is sober.

\begin{proposition} \label{prop: T2 implies sober}
  If $M \in \MT_2$, then $M \in \MT_{\mathrm{Sob}}$.
\end{proposition}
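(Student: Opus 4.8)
The plan is to show that every Hausdorff MT-algebra $M$ is both a $T_0$-algebra and weakly sober, since soberness is by definition the conjunction of these two conditions. The $T_0$ part is immediate: $M\in\MT_2$ implies $M\in\MT_1\subseteq\MT_0$, as already noted after \cref{def: T2}. So the real content is weak soberness, i.e.\ showing that for each join-irreducible $p\in\C(M)$ there is an atom $x\in\at(M)$ with $p=\diamond x$.

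First I would fix a join-irreducible $p\in\C(M)$ and translate the Hausdorff hypothesis into a statement about $p$. Since $\GC(M)$ join-generates $M$ and $p\in\C(M)$, one can write $p=\bigvee S$ with $S\subseteq\GC(M)$; join-irreducibility of $p$ forces $p\in\GC(M)$ itself (after absorbing the join, or by arguing that one summand already equals $p$ — here one should be slightly careful, since join-irreducibility only directly handles finite joins, so the cleaner route is to use that $p=\bigwedge\{\diamond\square c\mid p\le\square c\}$ and exploit join-irreducibility of $p$ in $\C(M)$ together with meet-irreducibility of the corresponding open element). In any case, the goal becomes: produce an atom below $p$ whose closure is exactly $p$.

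The key step is to locate the atom. I would consider the set $\{\neg\diamond\square c\mid p\le\square c\}$ of regular open elements; dually to the presentation $p=\bigwedge\{\diamond\square c\mid p\le\square c\}$, the element $m:=\neg p$ is the join of these regular opens. Because $p$ is join-irreducible in the co-frame $\C(M)$, $m=\neg p$ is meet-irreducible in the frame $\O(M)$, so $q:=\{a\in\O(M)\mid a\nleq m\}$ is a completely prime filter of $\O(M)$. The Hausdorff separation should now let me refine $q$ to a point that is realized by an atom: using that the $\diamond\square c$ (equivalently the regular opens $\neg\diamond\square c$) separate points, one shows the family $\{\square c: p\le\square c\}$ together with $\diamond$ of atoms pins down a single atom $x$ with $\diamond x=p$. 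Concretely, I expect to argue that $\bigwedge\{\square c\mid p\le \square c\}$ — or rather a suitable "kernel" built from $p$ — is nonzero and hence (not needing spatiality of $M$, only the Hausdorff join-generation, which gives enough closed elements) contains an atom $x$; then $\diamond x\le p$ since $p$ is closed and above $x$, while the Hausdorff condition prevents $\diamond x$ from being strictly below $p$, because a strictly smaller closed element would let us separate $x$ from a point of the "missing" part, contradicting that $p$ was the $\GC$-approximant. Finally, $p=\diamond x$ for some atom $x$ gives weak soberness, and combined with $M\in\MT_0$ we conclude $M\in\MT_{\mathrm{Sob}}$.

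The main obstacle will be the nonemptiness/atom-existence step: since $M$ need not be spatial, I cannot simply say "$p\ne 0$ so it has an atom below it." The argument must instead extract the atom from the Hausdorff structure itself — presumably by showing that the principal-type filter associated to $p$ in $\C(M)$, when $p$ is join-irreducible, forces the meet $\bigwedge\{\diamond\square c\mid p\le\square c\}=p$ to behave like the closure of a point, and that the separating family $\GO(M)$ is rich enough to guarantee that this meet is the closure of an actual atom rather than merely a "virtual" point. I would model this on the classical proof that Hausdorff spaces are sober (an irreducible closed set in a Hausdorff space is a singleton), adapting "irreducible closed set" to "join-irreducible closed element" and "singleton" to "closure of an atom," and I expect \cref{lem: Sober} and the behavior of $\eta$ and $\diamond$ recorded in \cref{rem: closed sets} and \cref{unique} to do most of the bookkeeping once the atom is in hand.
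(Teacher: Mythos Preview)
Your plan has a genuine gap, and you have already put your finger on it: the atom-existence step. In a non-spatial MT-algebra, a nonzero element need not have an atom below it, so none of the ``kernel'' or ``filter'' constructions you sketch can be expected to produce an actual $x\in\at(M)$. Your hope that ``the Hausdorff join-generation\ldots gives enough closed elements'' to force an atom is not justified: $\GC(M)$ join-generating $M$ tells you that nonzero elements dominate nonzero elements of $\GC(M)$, not that they dominate atoms. The argument as written does not close this gap, and the references to \cref{lem: Sober}, \cref{rem: closed sets}, and \cref{unique} all presuppose the atom is already in hand.

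The paper sidesteps the problem entirely by proving the contrapositive: if $c\in\C(M)$ is nonzero and \emph{not} an atom, then $c$ is not join-irreducible in $\C(M)$. The point is that in a Boolean algebra any non-atom $c\neq 0$ splits as $c=a\vee b$ with $0<a,b<c$ and $a\wedge b=0$; the Hausdorff hypothesis then supplies $d$ with $a\le\square d$ and $b\nleq\diamond\square d$, and the two closed elements $\neg\square d$ and $\diamond\square d$ witness that $c$ is join-reducible. Consequently every join-irreducible $p\in\C(M)$ \emph{is itself} an atom, and since $M\in\MT_1$ that atom is closed, so $p=\diamond p$. No atom needs to be ``found'': the join-irreducible element is the atom. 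This is precisely the algebraic form of the classical argument you allude to (an irreducible closed set with two points is covered by two proper closed subsets), with ``two distinct points'' replaced by the Boolean splitting $c=a\vee b$.
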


\begin{proof}
 
It suffices to show that if $c \neq 0$ is a closed element that is not an atom, then $c$ is not join-irreducible. 
Write $c = a \vee b$ with $0 < a, b < c$ and $a \wedge b = 0$. Since $M \in \MT_2$, there is a nonzero element in $\GC(M)$ under $a$. Without loss of generality we may assume that $a \in \GC(M)$, so $a = \bigwedge\{\diamond \square d \mid a \leq \square d\}$. Therefore, there is $d$ such that $a \leq \square d$ and $b \not\leq \diamond \square d$ (otherwise $b\leq a$). 
Let $m = \neg \square d$ and $n = \diamond \square d$. Both $m$ and $n$ are closed, and $m \vee n = \neg \square d \vee \diamond \square d = 1$, so $c \leq m \vee n$. However, $c \not\leq m$ because $a \leq \square d$, and $c \not\leq n$ because $b \not\leq \diamond \square d$. Thus, $c$ is not join-irreducible.
\end{proof}

We next relate $T_2$-algebras to Hausdorff frames. We recall (see, e.g., \cite[p.~330]{PicadoPultr2012}) that each element $a$ of a frame $L$ has the {\em pseudocomplement} $a^{*} = \bigvee \{ b \in L \mid b \wedge a =0 \}$. If $L=\O(M)$ for some MT-algebra $M$, then $a^{*}=\square\neg a$ (see, e.g., \cite[p.~18]{Esakia}).

\begin{definition}\cite[p.~43]{PicadoPultr2021}
A frame $L$ is {\em Hausdorff} if for all $a\in L \setminus \{1\}$ we have 
\[
a=\bigvee \{u \in L \mid u\leq a \text{ and } u^* \nleq a \}.
\]
\end{definition}

\begin{proposition} \label{lem: OM Hausdorff}
If $M\in\MT_2$, then $\O(M)$ is a Hausdorff frame.
\end{proposition}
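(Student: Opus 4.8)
The plan is to work from the equivalent form of the hypothesis: $M\in\MT_2$ means exactly that $\GC(M)$ join-generates $M$, so for any $a\in\O(M)$ we have $a=\bigvee\{c\in\GC(M)\mid c\le a\}$. Since the reverse inequality in the Hausdorff-frame identity is trivial, it suffices to prove that every nonzero $c\in\GC(M)$ with $c\le a$ satisfies $c\le u$ for some $u\in\O(M)$ with $u\le a$ and $u^{*}\nleq a$; then $a\le\bigvee\{u\in\O(M)\mid u\le a,\ u^{*}\nleq a\}\le a$ and we are done. Throughout I would use the identity $u^{*}=\square\neg u$ for $u\in\O(M)$ recalled just before the statement.

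So fix $a\in\O(M)\setminus\{1\}$ (hence $\neg a\ne 0$) and a nonzero $c\in\GC(M)$ with $c\le a$. By definition of $\GC(M)$, $c=\bigwedge\{\diamond\square d\mid c\le\square d\}$. The first step is to find a single $d$ with $c\le\square d$ and $\neg a\nleq\diamond\square d$: if no such $d$ existed, then $\neg a\le\diamond\square d$ for every $d$ with $c\le\square d$, whence $\neg a\le\bigwedge\{\diamond\square d\mid c\le\square d\}=c\le a$, forcing $\neg a=0$, a contradiction. (This is the same ``pick a separating $d$'' move used in the proof that Hausdorff MT-algebras are sober.) Having fixed such a $d$, I would set $u=\square d\wedge a$, which is open, and note $c\le u\le a$ since $c\le\square d$ and $c\le a$.

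It then remains to verify $u^{*}\nleq a$. From $u\le\square d$ we get $\neg u\ge\neg\square d$, so by monotonicity $u^{*}=\square\neg u\ge\square\neg\square d=\neg\diamond\square d$. On the other hand, $\neg a\nleq\diamond\square d$ gives $\neg a\wedge\neg\diamond\square d\ne 0$, and as this element lies below $\neg a$ it witnesses $\neg\diamond\square d\nleq a$; combining with the previous inequality yields $u^{*}\ge\neg\diamond\square d\nleq a$, so $u^{*}\nleq a$, as needed. The one point requiring a little care is the choice of $u$: one is forced to replace $\square d$ by the smaller open $\square d\wedge a$ in order to land below $a$, and the reason this shrinking does not spoil $u^{*}\nleq a$ is precisely that passing to a smaller open only enlarges the pseudocomplement. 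I do not expect any serious obstacle beyond getting this bookkeeping right.
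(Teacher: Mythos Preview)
Your proof is correct. It is essentially the dual of the paper's argument: the paper uses the equivalent formulation that $\GO(M)$ meet-generates $M$, picks a single $u\in\GO(M)$ with $a\le u\ne 1$, expands $u=\bigvee\{\square\diamond v\mid \diamond v\le u\}$, and then shows each $b=a\wedge\square\diamond v$ satisfies $b^*\nleq a$ (the contradiction being $\neg a\le\diamond v\le u$ together with $a\le u$). You instead use join-generation by $\GC(M)$, cover $a$ by the $c\in\GC(M)$ below it, and for each such $c$ extract a single separating $d$. The bookkeeping is the same---intersecting with $a$ to land below $a$ and observing this only enlarges the pseudocomplement---and both routes hinge on the identical ``if no separating element existed, then $\neg a$ would be trapped'' step. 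The paper's version is marginally more economical (one $u$ suffices rather than one per $c$), while yours makes the covering of $a$ by admissible opens slightly more explicit; neither gains anything substantive over the other.
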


\begin{proof}
Let $a \in \O(M)$ with $a\ne 1$.  
Since $M \in \MT_2$, there is $u \in \GO(M)$ such that $a\le u \ne 1$. Therefore, $u = \bigvee \{\square \diamond v \mid \diamond v \leq u \}$, and so $a = \bigvee \{ a\wedge\square \diamond v \mid \diamond v \leq u \}$. Let $b = a \wedge \square\diamond v$. Then $b\in\O(M)$ and $b\le a$. Suppose that $\square \neg b \leq a$. Then $\neg a \leq \diamond b \leq \diamond v \leq u$, a contradiction since $a, \neg a \leq u$ implies that $u=1$. Thus, $ \square \neg b \nleq a$.
Consequently, $a = \bigvee\{b \in \O(M) \mid  b \leq a \text{ and } \square \neg b \nleq a \}$, and hence $\O(M)$ is a Hausdorff frame.
\end{proof}

\begin{remark}

The converse of \cref{lem: OM Hausdorff} is not true. Indeed, it follows from \cite[p.~46]{PicadoPultr2021} that a Hausdorff frame is not necessarily subfit. Thus, if $\O(M)$ is such, then $M$ can't be a $T_2$-algebra by \cref{subfit}.
\end{remark}

Let $\HFrm$ be the full subcategory of $\Frm$ consisting of Hausdorff frames, and $\Frm_{\bf sfit}$ the full subcategory of $\HFrm$ consisting of subfit Hausdorff frames. 

\begin{proposition}\label{essentially surjective MT2}
The restriction $\O:\MT_2\to\HFrm_{\bf sfit}$ is well defined and faithful.
\end{proposition}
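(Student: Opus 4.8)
The plan is to assemble this from results already proved, since both claims are immediate once the right inclusions are noted. For \textbf{well-definedness} I first check the action on objects: given $M\in\MT_2$, I want $\O(M)\in\HFrm_{\bf sfit}$. That $\O(M)$ is a Hausdorff frame is exactly \cref{lem: OM Hausdorff}. For subfitness, I would record the inclusion $\MT_2\subseteq\MT_1$: since $\GC(M)\subseteq\C(M)$, if $\GC(M)$ join-generates $M$ then so does $\C(M)$, so $M$ is a $T_1$-algebra; \cref{subfit} then yields that $\O(M)$ is subfit. Hence $\O(M)$ is a subfit Hausdorff frame, i.e.\ an object of $\HFrm_{\bf sfit}$. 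On morphisms, $\O$ sends an MT-morphism $h$ to its restriction $h|_{\O(M)}$, which is a frame homomorphism by \cref{lemma: restriction mt is frame}; since $\HFrm_{\bf sfit}$ is a full subcategory of $\Frm$, this restriction is automatically a morphism of $\HFrm_{\bf sfit}$. So $\O:\MT_2\to\HFrm_{\bf sfit}$ is well defined.

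For \textbf{faithfulness}, I would use the chain of full subcategories $\MT_2\subseteq\MT_1\subseteq\MT_{1/2}\subseteq\MT_0$. Any two parallel MT-morphisms $h,g:M\to N$ between Hausdorff algebras are in particular parallel MT-morphisms between $T_0$-algebras, so if $\O(h)=\O(g)$ then $h=g$ by \cref{thm: O faithful for T0}. Thus the restriction of $\O$ to $\MT_2$ (with codomain cut down to the full subcategory $\HFrm_{\bf sfit}$ of $\Frm$, which does not affect faithfulness) is faithful.

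There is essentially no obstacle here: the statement is a corollary of \cref{lem: OM Hausdorff,subfit,thm: O faithful for T0} together with the elementary containment $\MT_2\subseteq\MT_1$. The only point worth flagging is that, in contrast with \cref{essentially surjective MT1}, essential surjectivity is \emph{not} asserted, since one cannot conclude that $\overline{B(L)}$ is Hausdorff for a Hausdorff frame $L$ (only that it is $T_{1/2}$, and $T_1$ when $L$ is also subfit); whether $\O:\MT_2\to\HFrm_{\bf sfit}$ is essentially surjective is therefore left open.
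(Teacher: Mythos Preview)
Your proof is correct and matches the paper's own argument essentially verbatim: well-definedness via \cref{lem: OM Hausdorff} and \cref{subfit} (using $\MT_2\subseteq\MT_1$), and faithfulness via \cref{thm: O faithful for T0}. Your added remark that essential surjectivity is not claimed is also consistent with the paper, which explicitly leaves this open.
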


\begin{proof}
That the restriction $\O:\MT_2\to\HFrm_{\bf sfit}$ is well defined follows from \cref{subfit,lem: OM Hausdorff}, and that it is faithful from \cref{thm: O faithful for T0}. 
\end{proof}

We don't know whether the restriction of $\O$ to $\MT_2$ is essentially surjective or full. However, the situation improves in the spatial case, as we next see. 

\begin{lemma} \label{lem: spatial + T2 implies subfit}
Every spatial Hausdorff frame is subfit.
\end{lemma}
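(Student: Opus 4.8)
The plan is to work directly with the points of the frame: spatiality lets me reduce subfitness to a single assertion about the ``complement of a point,'' and the Hausdorff axiom is then exactly what is needed to eliminate the bad case.

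First I would fix a spatial Hausdorff frame $L$ and $a,b\in L$ with $a\nleq b$. By spatiality there is a completely prime filter $p$ with $a\in p$ and $b\notin p$. Set $m=\bigvee\{x\in L\mid x\notin p\}$, the largest element missing from $p$. The routine bookkeeping is: complete primeness of $p$ forces $m\notin p$, hence $m\neq 1$; and $x\in p$ iff $x\nleq m$ for every $x$, so in particular $a\nleq m$ and $b\leq m$. The task then is to find $c$ with $a\vee c=1\neq b\vee c$, and the natural choice is $c=m$: we automatically get $b\vee m=m\neq 1$, so everything comes down to proving $a\vee m=1$.

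The heart of the argument is to prove $a\vee m=1$ by contradiction. Assuming $a\vee m\neq 1$, the Hausdorff axiom applied to $a\vee m$ expresses it as $\bigvee\{u\mid u\leq a\vee m \text{ and } u^{*}\nleq a\vee m\}$. Since $a\nleq m$ we have $a\vee m\nleq m$, i.e.\ $a\vee m\in p$, so by complete primeness of $p$ some $u$ appearing in this join satisfies $u\in p$, hence $u\nleq m$. But $u^{*}\nleq a\vee m$ together with $m\leq a\vee m$ forces $u^{*}\nleq m$, so $u^{*}\in p$ as well. Since $p$ is a filter, $u\wedge u^{*}\in p$, contradicting $u\wedge u^{*}=0\notin p$. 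Therefore $a\vee m=1$, and $c=m$ does the job, so $L$ is subfit.

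I expect the only real subtlety to be the careful treatment of the ``complement of a point'' $m$: ensuring $m\notin p$ (this is precisely where complete primeness of $p$, rather than mere primeness, is used) and the equivalence $x\in p\iff x\nleq m$, together with remembering that the pseudocomplement $u^{*}$ is computed in the frame $L$, so that $u\wedge u^{*}=0$ holds by frame distributivity. Everything else is a short chain of order manipulations. (Equivalently, one can package the key step as: the prime elements of a spatial Hausdorff frame form an antichain, so nothing strictly above the prime $m$ can itself lie below a prime, forcing $a\vee m=1$ since $L$ has enough primes; but the direct contradiction above is cleaner and avoids appealing to $T_1$-ness of $\pt(L)$.)
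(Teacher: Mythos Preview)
Your proof is correct. The bookkeeping around the ``co-point'' $m$ is accurate (complete primeness is exactly what makes $m\notin p$), and the contradiction via the Hausdorff axiom goes through cleanly: the join representation of $a\vee m$ places some $u$ in $p$, and $u^{*}\nleq m$ forces $u^{*}\in p$, so $0=u\wedge u^{*}\in p$.

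Your route is genuinely different from the paper's. The paper argues indirectly through duality: since $L$ is spatial, $L\cong\Omega(\pt(L))$; the space $\pt(L)$ is sober, hence $T_0$; a cited result from \cite{PicadoPultr2021} then says that a $T_0$-space whose open-set frame is Hausdorff is itself a Hausdorff space; and finally another citation gives that the opens of a Hausdorff space form a subfit frame. Your argument stays entirely inside the frame, uses only the definitions of spatiality, Hausdorffness, and the pseudocomplement, and avoids both the passage to $\pt(L)$ and the two external citations. The paper's approach is shorter on the page because it outsources the work, but yours is self-contained and makes transparent \emph{why} the Hausdorff condition forces primes to be maximal (which is really what $a\vee m=1$ expresses).
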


\begin{proof}
Let $L$ be a spatial Hausdorff frame. Then $L \cong \Omega(\pt(L))$, and so $\Omega(\pt(L))$ is a Hausdorff frame. Since $\pt(L)$ is sober and hence a $T_0$-space, it follows from \cite[p.~43]{PicadoPultr2021} that $\pt(L)$ is a Hausdorff space. But then $\Omega(\pt(L))$ is subfit (see, e.g., \cite[p.~24]{PicadoPultr2021}). Thus, $L$ is subfit.
\end{proof}

Let $\SHFrm$ be the full subcategory of $\HFrm$ consisting of spatial Hausdorff frames. By \cref{lem: spatial + T2 implies subfit}, 
$\SHFrm$ is a full subcategory of $\HFrm_{\bf sfit}$.

\begin{theorem} \label{thm: equivalence for spatial T2}
The restriction $\O:\SMT_{2}\to\bf SHFrm$ is an equivalence.
\end{theorem}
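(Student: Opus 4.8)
The plan is to show that $\O\colon\SMT_2\to\SHFrm$ is essentially surjective, full, and faithful, just as in the proof of \cref{thm: full}. Faithfulness is already available from \cref{thm: O faithful for T0} (or \cref{essentially surjective MT2}), since every Hausdorff algebra is a $T_0$-algebra. For essential surjectivity, let $L\in\SHFrm$. Then $L\cong\Omega(\pt(L))$, and $\pt(L)$ is a sober—hence $T_0$—space; by \cite[p.~43]{PicadoPultr2021} (used already in \cref{lem: spatial + T2 implies subfit}) the frame $\Omega(\pt(L))$ being Hausdorff forces $\pt(L)$ to be a Hausdorff space, so $\P(\pt(L))\in\SMT_2$ by \cref{lem: T2}. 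Since $\O(\P(\pt(L)))=\Omega(\pt(L))\cong L$ by \cref{cor: OP=Omega}, the restriction is essentially surjective and lands in $\SHFrm$ as required (it is well defined into $\HFrm_{\bf sfit}$ by \cref{essentially surjective MT2}, and into $\SHFrm$ because $\O$ of a spatial MT-algebra is spatial by \cref{SMT implies SFrm}).

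The substantive part is fullness. Here I would run exactly the diagram-chase of \cref{thm: full}. Given $M,N\in\SMT_2$ and a frame homomorphism $h\colon\O(M)\to\O(N)$, I first note that spatial Hausdorff MT-algebras are sober: $M\in\MT_2$ implies $M\in\MT_{\bf Sob}$ by \cref{prop: T2 implies sober}, so $M\in\SMT_{\bf Sob}$, and likewise $N$. In particular $\vartheta_M\colon\at(M)\to\pt(\O(M))$ and $\vartheta_N$ are homeomorphisms by \cref{rem: sober implies theta homeo}, and $\eta_M,\eta_N$ are MT-isomorphisms by \cref{thm: SMT dual to Top}. Then I set $f=\vartheta_M^{-1}\circ\pt(h)\circ\vartheta_N\colon\at(N)\to\at(M)$, obtain the MT-morphism $\P(f)\colon\P(\at(M))\to\P(\at(N))$, and let $g=\eta_N^{-1}\circ\P(f)\circ\eta_M\colon M\to N$; this is an MT-morphism, and the Claim in the proof of \cref{thm: full} together with the same final computation shows $g|_{\O(M)}=h$, so $\O(g)=h$.

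In fact, the cleanest way to present this is simply to observe that $\SMT_2$ is a full subcategory of $\SMT_{\bf Sob}$ (by \cref{prop: T2 implies sober}) and $\SHFrm$ is a full subcategory of $\SFrm$, and that $\O\colon\SMT_{\bf Sob}\to\SFrm$ is already known to be an equivalence by \cref{thm: full}. Thus it remains only to check that this equivalence matches the two subcategories: if $M\in\SMT_2$ then $\O(M)\in\SHFrm$ by \cref{lem: OM Hausdorff} and \cref{SMT implies SFrm}; conversely, if $M\in\SMT_{\bf Sob}$ and $\O(M)\in\SHFrm$, then by essential surjectivity of $\O$ on $\SMT_{\bf Sob}$ and the argument above, $M\cong\P(\pt(\O(M)))=\P(X)$ for a Hausdorff space $X$, whence $M\in\SMT_2$ by \cref{lem: T2}. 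An equivalence restricting to matching full subcategories on both sides is again an equivalence, giving the result.

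I expect the main obstacle to be the converse matching in the last paragraph—verifying that $\O(M)\in\SHFrm$ actually pulls back to $M\in\MT_2$ rather than merely to something with a Hausdorff frame of opens; this needs spatiality of $M$ together with \cref{lem: spatial + T2 implies subfit} and \cref{lem: T2}, exactly as sketched. If one instead prefers the self-contained diagram-chase route, the only real work is re-verifying the Claim, which is identical to the one in \cref{thm: full} and requires no new idea.
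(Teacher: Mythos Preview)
Your proposal is correct and follows essentially the same approach as the paper: both restrict the known equivalence $\O:\SMT_{\bf Sob}\to\SFrm$ from \cref{thm: full} to the Hausdorff subcategories, using \cref{prop: T2 implies sober} for the inclusion $\SMT_2\subseteq\SMT_{\bf Sob}$, \cref{lem: OM Hausdorff} and \cref{SMT implies SFrm} for well-definedness, and the construction $M=\P(\pt(L))$ together with \cite[p.~43]{PicadoPultr2021} and \cref{lem: T2} for essential surjectivity. Your ``converse matching'' check is a bit more than the paper needs (essential surjectivity plus inherited full faithfulness already suffice), but it is harmless and correct.
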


\begin{proof}
By \cref{thm: full}, the restriction $\O:\SMT_{\bf sob} \to \SFrm$ is an equivalence. Further restriction of $\O$ to 
Hausdorff MT-algebras is well defined by \cref{essentially surjective MT2}. Let $L \in \bf SHFrm$. Since $L$ is spatial, $L\cong\Omega(\pt(L))$. Letting $M$ be the powerset of $\pt(L)$, we have that $M$ is a spatial MT-algebra and $\O(M)=\Omega(\pt(L))$ is isomorphic to $L$. Moreover, since $\pt(L)$ is a $T_0$-space and $\Omega(\pt(L))$ is a Hausdorff frame, $\pt(L)$ is a Hausdorff space by \cite[p.~43]{PicadoPultr2021}. Therefore, $M$ is a Hausdorff MT-algebra by \cref{lem: T2}, and hence the restriction of $\O$ is essentially surjective. Finally, since every Hausdorff MT-algebra is sober (see \cref{prop: T2 implies sober}), we obtain  that the restriction of $\O$ is an equivalence between $\SMT_2$ and $\SHFrm$.
\end{proof}

We now turn our attention to regular MT-algebras. 
We start by recalling the definition of a regular frame (see, e.g., \cite[p.~89]{PicadoPultr2012}). For a frame $L$ and $a,b\in L$, we recall that $b$ is {\em rather below} or {\em well inside} $a$, written $b \prec a$, provided $b^{*}\vee a=1$. Then $L$ is {\em regular} if $\prec$ is approximating, meaning that $a=\bigvee \{b\in L \mid b \prec a\}$ for each $a \in L$. If $L$ is the frame of opens of a topological space $X$, then $U \prec V$ iff $\overline{U}\subseteq V$. This has an obvious generalization to the MT-algebra $(\P(X),\int)$, which further generalizes to an arbitrary MT-algebra.

\begin{definition} \label{def: well inside} 
Let $M$ be an MT-algebra. For $a , b \in M$ define $a \lhd b$ provided $\diamond a \leq \square b$.
\end{definition}

\begin{lemma}\label{regular implies regular}
Let $M$ be an MT-algebra. If $a,b \in \O(M)$, then $ a \lhd b \iff a \prec b$.
\end{lemma}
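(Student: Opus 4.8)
The statement is an equivalence between the MT-algebra relation $a \lhd b$ (meaning $\diamond a \le \square b$) and the frame relation $a \prec b$ (meaning $a^* \vee b = 1$ in $\O(M)$), for $a, b$ open. The plan is to unwind both sides using the facts already recorded in the excerpt: first, that for $a \in \O(M)$ the pseudocomplement in the frame $\O(M)$ is $a^* = \square \neg a$ (noted just before \cref{def: T2}); second, that $\square$ and $\diamond$ are De Morgan dual, so $\diamond a = \neg \square \neg a$; and third, that since $a, b$ are open, $\square a = a$ and $\square b = b$.

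First I would rewrite the right-hand side. By definition $a \prec b$ means $a^* \vee b = 1$ in $\O(M)$, and since $a^*, b \in \O(M)$ their join in $\O(M)$ agrees with their join in $M$ (as $\O(M)$ is a subframe, closed under arbitrary joins), so this is the same as $a^* \vee b = 1$ in $M$. Substituting $a^* = \square \neg a$, the condition becomes $\square \neg a \vee b = 1$. Taking complements in $M$, this is equivalent to $\neg(\square \neg a) \wedge \neg b = 0$, i.e. $\diamond a \wedge \neg b = 0$, i.e. $\diamond a \le \neg\neg b = b$. Since $b$ is open, $b = \square b$, so this reads $\diamond a \le \square b$, which is exactly $a \lhd b$. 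That chain of equivalences is the whole proof; each step is a one-line Boolean or definitional manipulation.

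The only thing requiring a word of care is the passage between joins/meets computed in $\O(M)$ versus in $M$: the join $a^* \vee b$ is the same whether taken in $\O(M)$ or $M$ because $\O(M)$ is closed under arbitrary joins in $M$, but one should not silently do the same for meets — here we only ever take complements and meets in the ambient Boolean algebra $M$, never in $\O(M)$, so there is no issue. I would also explicitly invoke $a^* = \square\neg a$ with a citation to the spot in the excerpt where this identity is recorded. There is no genuine obstacle; the "hard part," such as it is, is merely being careful that complementation is an ambient-$M$ operation and that $b = \square b$ is used to land on the stated form of $\lhd$.

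\begin{proof}
Let $a, b \in \O(M)$. Recall that the pseudocomplement of $a$ in the frame $\O(M)$ is $a^* = \square \neg a$. Since $\O(M)$ is closed under arbitrary joins in $M$, the join $a^* \vee b$ computed in $\O(M)$ coincides with the one computed in $M$. Hence
\[
a \prec b \iff a^* \vee b = 1 \iff \square \neg a \vee b = 1.
\]
Taking complements in $M$ and using the De Morgan duality $\diamond a = \neg \square \neg a$, the last condition is equivalent to $\diamond a \wedge \neg b = 0$, that is, $\diamond a \le b$. Since $b$ is open, $b = \square b$, so this says precisely $\diamond a \le \square b$, which is $a \lhd b$ by \cref{def: well inside}. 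Thus $a \lhd b \iff a \prec b$.
\end{proof}
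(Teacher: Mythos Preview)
Your proof is correct and follows essentially the same approach as the paper: both unwind $a \prec b$ via $a^* = \square\neg a$ and then use Boolean complementation/De Morgan duality to rewrite $\square\neg a \vee b = 1$ as $\diamond a \le b = \square b$, i.e.\ $a \lhd b$. You just run the chain in the opposite direction and spell out the step about joins in $\O(M)$ versus $M$ that the paper leaves implicit.
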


\begin{proof}
For $a,b \in \O(M)$, we have 
\[
a \lhd b \iff \diamond a \leq b \iff \square \neg a \vee b =1 \iff a^* \vee b =1 \iff a \prec b. \qedhere
\] 
\end{proof}

The proof of the following lemma is straightforward and we skip the details. 

\begin{lemma}\label{prec}
Let $M$ be an MT-algebra. 
\begin{enumerate}[ref=\thelemma(\arabic*)]
\item $a \lhd b \implies a \leq b$.
\item $0 \lhd a \lhd 1$. 
\item $x \leq a \lhd b \leq y \implies x \lhd y$. \label[lemma]{a prec b}
\item $a \lhd b \iff \neg b \lhd \neg a$.
\item $a_i \lhd b_i$ for $ i=1,2 \implies (a_1 \vee a_2) \lhd (b_1 \vee b_2)$ and $(a_1 \wedge a_2) \lhd (b_1 \wedge b_2)$.
\end{enumerate}
\end{lemma}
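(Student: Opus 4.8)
\textbf{Proof plan for \cref{prec}.} The statement is a list of five routine properties of the relation $a \lhd b$ (defined by $\diamond a \leq \square b$), each of which follows by unwinding the definition together with elementary Kuratowski-axiom manipulations. Since the paper announces that the proof is skipped, I will only indicate the mechanism for each item rather than grind through the algebra.

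First, (1): from $\diamond a \leq \square b$ and the Kuratowski inequalities $a \leq \diamond a$ and $\square b \leq b$ we get $a \leq \diamond a \leq \square b \leq b$. For (2): $0 \lhd a$ amounts to $\diamond 0 \leq \square a$, which holds because $\diamond 0 = 0$; and $a \lhd 1$ amounts to $\diamond a \leq \square 1 = 1$, which is trivial. For (3): suppose $x \leq a \lhd b \leq y$; monotonicity of $\diamond$ gives $\diamond x \leq \diamond a$, monotonicity of $\square$ gives $\square b \leq \square y$, and chaining with $\diamond a \leq \square b$ yields $\diamond x \leq \square y$, i.e.\ $x \lhd y$.

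For (4): this is the De Morgan duality between interior and closure. We have $a \lhd b \iff \diamond a \leq \square b \iff \neg \square b \leq \neg \diamond a \iff \diamond \neg b \leq \square \neg a$, using $\neg \square b = \diamond \neg b$ and $\neg \diamond a = \square \neg a$; the last inequality is exactly $\neg b \lhd \neg a$. For (5): assume $\diamond a_i \leq \square b_i$ for $i = 1,2$. For the join case, $\diamond(a_1 \vee a_2) = \diamond a_1 \vee \diamond a_2 \leq \square b_1 \vee \square b_2 \leq \square(b_1 \vee b_2)$, where the last step uses monotonicity of $\square$ (note $\square$ need not preserve joins, but $\square b_i \leq \square(b_1 \vee b_2)$ still holds). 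For the meet case, $\diamond(a_1 \wedge a_2) \leq \diamond a_1 \wedge \diamond a_2 \leq \square b_1 \wedge \square b_2 = \square(b_1 \wedge b_2)$, using that $\diamond$ is monotone and that $\square$ preserves finite meets.

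There is no real obstacle here: every step is an immediate consequence of monotonicity of $\square,\diamond$, the two defining identities $\square 1 = 1$, $\square(a\wedge b)=\square a \wedge \square b$ (and their closure duals $\diamond 0 = 0$, $\diamond(a\vee b)=\diamond a \vee \diamond b$), the inequalities $\square a \leq a \leq \diamond a$, and the De Morgan relations $\diamond = \neg\square\neg$, $\square = \neg\diamond\neg$. The only point worth a moment's care is (5), where one must remember that $\square$ does not distribute over joins and $\diamond$ does not distribute over meets, so those two sub-cases are handled by the one-sided monotonicity estimates rather than by an equality. This is presumably why the authors felt comfortable omitting the details.
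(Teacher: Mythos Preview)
Your proof is correct and is exactly the kind of routine verification the authors had in mind when they wrote ``the proof of the following lemma is straightforward and we skip the details.'' Since the paper omits the proof entirely, there is nothing to compare; your argument for each item is the natural one.
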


\begin{definition} \label{def: T3}
We call $M\in\MT$ a {\em regular algebra} or a {\em $T_3$-algebra} if $M$ is a $T_1$-algebra and for all $a \in \O(M)$ we have $a = \bigvee \{b \in \O(M) \mid b \lhd a\}$. 
\end{definition}

\begin{remark}\label{T3}
Equivalently, a $T_1$-algebra $M$ is a regular MT-algebra provided for all $c \in \C(M)$ we have $ c= \bigwedge \{d \in \C(M) \mid c \lhd d\}$.
\end{remark}

\begin{lemma}\label{AO(M)=O(M)} 
If $M$ is a regular MT-algebra, then $\GO(M)=\O(M)$ and $\GC(M)=\C(M)$.
\end{lemma}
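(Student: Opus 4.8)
The plan is to prove both equalities by establishing the reverse inclusions, since $\GO(M)\subseteq\O(M)$ and $\GC(M)\subseteq\C(M)$ hold for every MT-algebra. I will carry out the inclusion $\O(M)\subseteq\GO(M)$ in detail and then obtain $\C(M)\subseteq\GC(M)$ by the order-dual argument, using the meet-form of regularity recorded in \cref{T3} (equivalently, by applying $\neg$ and noting that $a\in\GC(M)$ iff $\neg a\in\GO(M)$, while $a\in\C(M)$ iff $\neg a\in\O(M)$).

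For the main inclusion, I would fix $a\in\O(M)$. Since $a$ is open we have $\square a=a$, so for any $b\in M$ the relation $b\lhd a$ of \cref{def: well inside} simplifies to $\diamond b\leq a$. Regularity of $M$ (\cref{def: T3}) then gives $a=\bigvee\{b\in\O(M)\mid b\lhd a\}=\bigvee\{b\in\O(M)\mid \diamond b\leq a\}$. The idea is to replace each open $b$ with $\diamond b\leq a$ by the element $\square\diamond b$: on one hand $b=\square b\leq\square\diamond b$ because $b\leq\diamond b$ and $\square$ is monotone; on the other hand $\square\diamond b$ literally occurs in the family indexing the defining join of $\GO(M)$, with witness $b$ itself since $\diamond b\leq a$; and finally $\square\diamond b\leq\square a=a$. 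Chaining these inequalities, $a=\bigvee\{b\in\O(M)\mid \diamond b\leq a\}\leq\bigvee\{\square\diamond b\mid \diamond b\leq a\}\leq a$, so equality holds throughout and $a=\bigvee\{\square\diamond b\mid\diamond b\leq a\}$, that is, $a\in\GO(M)$.

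The dual half runs the same way with meets: for $c\in\C(M)$ one has $\diamond c=c$, so $c\lhd d$ becomes $c\leq\square d$; regularity in the form of \cref{T3} gives $c=\bigwedge\{d\in\C(M)\mid c\leq\square d\}$; and for each such $d$ one uses $\diamond\square d\leq\diamond d=d$ together with $c\leq\square d\leq\diamond\square d$ to sandwich $c=\bigwedge\{\diamond\square d\mid c\leq\square d\}$, which exhibits $c\in\GC(M)$. I do not anticipate a genuine obstacle: the whole argument is a short order calculation, and the only points requiring attention are the orientation of the inequalities in the meet case and the observation (already implicit in \cref{def: well inside}) that $b\lhd a$ and $c\lhd d$ collapse to $\diamond b\leq a$ and $c\leq\square d$ on open, resp.\ closed, elements.
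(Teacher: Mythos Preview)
Your proposal is correct and follows essentially the same route as the paper: start from the regularity join $a=\bigvee\{b\in\O(M)\mid b\lhd a\}$, use $b\le\square\diamond b$ together with $\diamond b\le a$ to pass to $a=\bigvee\{\square\diamond b\mid\diamond b\le a\}$, and then handle the closed case dually. The only cosmetic difference is that the paper phrases the key step as ``$b\lhd a$ implies $b\le\square\diamond b\lhd a$'' rather than spelling out the sandwich inequalities, but the content is identical.
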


\begin{proof}
Clearly $\GO(M) \subseteq \O(M)$. For the reverse inclusion, let $a\in\O(M)$. Since $M$ is a regular algebra, $a = \bigvee \{b \in \O(M) \mid b \lhd a\}$. Observe that $b\lhd a$ implies that $b \leq \square\diamond b \lhd a$. Therefore, $a=\bigvee \{\square \diamond b \mid \diamond b \leq a\}$, and hence $a \in \GO(M)$. Thus, $\GO(M)=\O(M)$. The proof that $\GC(M)=\C(M)$ is similar.
\end{proof}

\begin{proposition}
 
If $M$ is a regular MT-algebra, then $M$ is a Hausdorff MT-algebra.
\end{proposition}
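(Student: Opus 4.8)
The plan is to deduce this directly from \cref{AO(M)=O(M)} together with \cref{def: T2}. Since $M$ is a $T_1$-algebra (being regular) and the goal is to show $M\in\MT_2$, it suffices by \cref{def: T2} to show that $\GC(M)$ join-generates $M$. But a $T_1$-algebra has $\C(M)$ join-generating $M$ by \cref{def: T1}, so it is enough to show that every closed element is a join of elements of $\GC(M)$; in fact \cref{AO(M)=O(M)} gives us the stronger statement $\GC(M)=\C(M)$, so there is nothing left to prove once we invoke it.

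Concretely, I would argue: let $M$ be a regular MT-algebra. By \cref{AO(M)=O(M)}, $\GC(M)=\C(M)$. Since $M$ is a $T_1$-algebra, $\C(M)$ join-generates $M$ by \cref{def: T1}. Hence $\GC(M)$ join-generates $M$, which by \cref{def: T2} means exactly that $M$ is a Hausdorff MT-algebra.

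There is essentially no obstacle here: the real content has already been extracted into \cref{AO(M)=O(M)}, whose proof uses the regularity condition $a=\bigvee\{b\in\O(M)\mid b\lhd a\}$ and the observation $b\lhd a\implies b\le\square\diamond b\lhd a$ (via \cref{a prec b}) to rewrite the defining join as one over $\{\square\diamond b\mid\diamond b\le a\}$. So the proof of the present proposition is a one-line bookkeeping step. If one preferred not to route through \cref{AO(M)=O(M)}, one could instead unfold it inline — for $a\in\O(M)$ write $a=\bigvee\{b\mid b\lhd a\}$ and note each such $b$ satisfies $b\le\square\diamond b$ with $\diamond b\le a$, so $a=\bigvee\{\square\diamond b\mid \diamond b\le a\}$ exhibits $a\in\GO(M)$; dually $\C(M)\subseteq\GC(M)$, and combined with $T_1$ this gives that $\GC(M)$ join-generates $M$. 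Either way the step to watch is simply making sure the containment $\GC(M)\subseteq\C(M)$ (noted after \cref{def:AC}) is used in the right direction, so that the equality $\GC(M)=\C(M)$ is genuinely available.
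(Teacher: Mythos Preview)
Your proof is correct and is essentially identical to the paper's own argument: invoke \cref{AO(M)=O(M)} to get $\GC(M)=\C(M)$, then use that $M$ is $T_1$ so $\C(M)$ join-generates $M$, hence $\GC(M)$ does, which is the definition of Hausdorff.
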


\begin{proof}

Since $M$ is a $T_1$-algebra, $\C(M)$ join-generates $M$. By \cref{AO(M)=O(M)}, $\C(M)=\GC(M)$. Therefore, $\GC(M)$ join-generates $M$, and hence $M$ is a Hausdorff algebra.
\end{proof}

Let $\MT_3$ be the full subcategory of $\MT_2$ consisting of regular MT-algebras. \cref{nonspatial} shows that there are regular MT-algebras that are not spatial. Let $\SMT_3=\MT_3\cap\SMT$ and let $\Top_3$ be the full subcategory of $\Top_2$ consisting of regular spaces.

\begin{lemma} \label{lem: MT3 and Top3}
Let $X \in \Top$ and $M\in\MT$.
    \begin{enumerate}
    \item $X\in\Top_3$ iff $ (\P(X),\int)\in\MT_3$.
    \item $M\in\MT_3$ implies $\at(M)\in\Top_3$.
    \end{enumerate}
\end{lemma}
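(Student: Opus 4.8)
The plan is to follow the pattern of \cref{lem: T1} and \cref{lem:T_2}, reducing everything to those two lemmas together with the elementary behaviour of the relation $\lhd$.

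For part (1), I would first record that $\Top_3$ consists of the regular Hausdorff spaces and that a regular $T_1$-space is automatically Hausdorff; hence $X\in\Top_3$ iff $X$ is $T_1$ and regular. By \cref{lem: T1}, $X\in\Top_1$ iff $(\P(X),\int)\in\MT_1$, so it only remains to match the two regularity clauses. For open sets $U,V$ of $X$, \cref{def: well inside} unwinds to $U\lhd V$ iff $\overline U\subseteq\int(V)=V$, and the usual characterization of topological regularity says that $X$ is regular iff $V=\bigcup\{U\text{ open}\mid\overline U\subseteq V\}$ for every $V\in\Omega(X)$. Since joins in $\O(\P(X))=\Omega(X)$ are unions, this is exactly the defining condition of a $T_3$-algebra (\cref{def: T3}) applied to $(\P(X),\int)$. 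Assembling these equivalences yields $X\in\Top_3$ iff $(\P(X),\int)\in\MT_3$.

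For part (2), suppose $M\in\MT_3$. Since $\MT_3$ is contained in $\MT_2$, \cref{lem:T_2} gives $\at(M)\in\Top_2$; in particular $\at(M)$ is Hausdorff, so it remains only to prove that $\at(M)$ is regular. Every open subset of $\at(M)$ is of the form $\eta(a)$ with $a\in\O(M)$. By regularity of $M$, $a=\bigvee\{b\in\O(M)\mid b\lhd a\}$, and since $\eta$ preserves arbitrary joins, $\eta(a)=\bigcup\{\eta(b)\mid b\lhd a\}$. The key observation is that $b\lhd a$ forces $\overline{\eta(b)}\subseteq\eta(a)$: as $a$ is open, $b\lhd a$ means $\diamond b\le a$, and \cref{rem: closed sets} gives $\overline{\eta(b)}\subseteq\eta(\diamond b)\subseteq\eta(a)$. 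Consequently
\[
\eta(a)=\bigcup\{\eta(b)\mid b\lhd a\}\subseteq\bigcup\{W\text{ open}\mid\overline W\subseteq\eta(a)\}\subseteq\eta(a),
\]
so $\eta(a)$ is the union of the open sets whose closure it contains, which is exactly regularity of $\at(M)$. Hence $\at(M)\in\Top_3$.

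I do not expect a genuine obstacle here: the statement is bookkeeping once \cref{rem: closed sets}, \cref{lem: T1} and \cref{lem:T_2} are in hand. The two spots that need a little care are, in (1), invoking ``regular $+$ $T_1$ $\Rightarrow$ Hausdorff'' so that $\Top_3$ can be presented as the class of regular $T_1$-spaces, matching the MT-side definition; and, in (2), using the correct \emph{inclusion} $\overline{\eta(b)}\subseteq\eta(\diamond b)$ from \cref{rem: closed sets} (equality would require atomicity, which is not assumed) and recalling that the topology on $\at(M)$ is $\eta[\O(M)]$, so that the sets $\eta(a)$ with $a\in\O(M)$ really do exhaust the opens.
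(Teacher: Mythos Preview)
Your proposal is correct and follows essentially the same route as the paper. The only cosmetic difference is in part~(2): the paper invokes \cref{lem: T1}(2) to get $\at(M)\in\Top_1$ and then verifies regularity, whereas you pass through $\MT_3\subseteq\MT_2$ and \cref{lem:T_2}(2) to obtain $\at(M)\in\Top_2$ before verifying regularity; the core step---using $\eta(a)=\bigcup\{\eta(b)\mid b\lhd a\}$ together with $\overline{\eta(b)}\subseteq\eta(\diamond b)\subseteq\eta(a)$ from \cref{rem: closed sets}---is identical in both.
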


\begin{proof}
(1) Observe that $X$ is a regular space iff $X$ is a $T_1$-space and 
for every $U\in\Omega(X)$ and $x \in U$ there is $V\in\Omega(X)$ such that $x \in V \subseteq \overline{V} \subseteq U$. The latter condition is equivalent to $U = \bigcup\{V \in \Omega(X) \mid V \lhd U\}$ for each $U\in\Omega(X)$. Thus, by \cref{lem: T1}(1), $X\in\Top_3$ iff $(\P(X),\int)\in\MT_3$.

   (2) Since $M\in\MT_1$, we have $\at(M)\in\Top_1$ by \cref{lem: T1}(2). Let $a\in\O(M)$ and $x \in \eta(a)$. 
   Then $x \leq a$. Since $M\in\MT_3$, we have $a = \bigvee \{ b\in\O(M) \mid b \lhd a \}$. Therefore, 
   $x \leq b$ for some $b \in \O(M)$ such that $\diamond b \le a$. Thus, by \cref{rem: closed sets}, $x \in \eta(b) \subseteq \overline{\eta(b)} \subseteq \eta(\diamond b) \subseteq \eta(a)$.
   Consequently, $\at(M)\in\Top_3$.
\end{proof}

As an immediate consequence of \cref{thm: adj and duality for T2,lem: MT3 and Top3}, we obtain:

\begin{theorem} \label{thm: duality for T3}
    The dual adjunction between $\Top_2$ and $\MT_2$ restricts to a dual adjunction between $\Top_3$ and $\MT_3$, and the dual equivalence between $\Top_2$ and $\SMT_2$ restricts to a dual equivalence between $\Top_3$ and $\SMT_3$.
\end{theorem}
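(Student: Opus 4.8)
The plan is to follow the template already used for \cref{thm: MT0 and Top0}, \cref{thm: duality for Td}, and \cref{thm: adj and duality for T2}: I would start from the dual adjunction $(\P,\at)$ between $\Top_2$ and $\MT_2$ established in \cref{thm: adj and duality for T2} and observe that it restricts to the full subcategories $\Top_3\le\Top_2$ and $\MT_3\le\MT_2$ because the two functors carry these subcategories into one another.

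First I would check that $\P$ maps $\Top_3$ into $\MT_3$: this is the forward implication of \cref{lem: MT3 and Top3}(1), which gives that $X\in\Top_3$ implies $(\P(X),\int)\in\MT_3$. Dually, $\at$ maps $\MT_3$ into $\Top_3$ by \cref{lem: MT3 and Top3}(2). Since $\Top_3$ and $\MT_3$ are \emph{full} subcategories, every morphism between objects of these subcategories is automatically a morphism in $\Top_2$, resp.\ $\MT_2$, and the components $\varepsilon_X$ and $\eta_M$ of the units of $(\P,\at)$ at objects $X\in\Top_3$ and $M\in\MT_3$ land in the relevant subcategories for free; naturality is inherited. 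Hence $(\P,\at)$ restricts to a dual adjunction between $\Top_3$ and $\MT_3$.

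For the dual equivalence I would restrict further to $\SMT_3=\MT_3\cap\SMT$. For any $X\in\Top_3$ the MT-algebra $(\P(X),\int)$ is atomic, hence spatial, and lies in $\MT_3$ by the previous paragraph, so $\P(X)\in\SMT_3$; conversely $\at(M)\in\Top_3$ for $M\in\SMT_3$ by \cref{lem: MT3 and Top3}(2). Since $\varepsilon_X$ is a homeomorphism for every $X$ (\cref{lem: epsilon homo}) and $\eta_M$ is an MT-isomorphism for every spatial $M$ (\cref{thm: SMT dual to Top}, via \cref{preserves interior}), the restricted units are isomorphisms, and we obtain the desired dual equivalence between $\Top_3$ and $\SMT_3$.

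There is essentially no obstacle here: the entire topological content has been isolated in \cref{lem: MT3 and Top3}, and what remains is the purely formal observation that a (dual) adjunction and its induced (dual) equivalence restrict to full subcategories preserved by the functors. The only point requiring any care is precisely the verification that $\P$ of a $T_3$-space is again a $T_3$-algebra and that $\at$ of a $T_3$-algebra is again a $T_3$-space, both of which are already supplied by \cref{lem: MT3 and Top3}.
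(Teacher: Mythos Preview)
Your proposal is correct and follows exactly the paper's approach: the paper states the theorem as an immediate consequence of \cref{thm: adj and duality for T2} and \cref{lem: MT3 and Top3}, and your write-up simply unpacks that inference in more detail. There is no discrepancy in strategy or substance.
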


Let $\bf RegFrm$ be the full subcategory of $\Frm$ consisting of regular frames. Since each regular frame is Hausdorff (see, e.g., \cite[p.~91]{PicadoPultr2021}), $\bf RegFrm$ is a full subcategory of $\bf HFrm$.

\begin{proposition}\label{essentially surjective MT3}
The restriction $\O:\MT_3\to\bf RegFrm$ is well defined, essentially surjective, and faithful.
\end{proposition}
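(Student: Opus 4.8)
The plan is to dispatch well-definedness and faithfulness quickly, and then to concentrate on essential surjectivity, which is the only part carrying real content. For well-definedness, suppose $M\in\MT_3$; I would show $\O(M)$ is regular by checking that $\prec$ is approximating. Fixing $a\in\O(M)$, \cref{def: T3} gives $a=\bigvee\{b\in\O(M)\mid b\lhd a\}$, and \cref{regular implies regular} says that for open $a,b$ the relations $b\lhd a$ and $b\prec a$ coincide; hence $a=\bigvee\{b\in\O(M)\mid b\prec a\}$, so $\O(M)\in\bf RegFrm$. Faithfulness is immediate: every regular MT-algebra is a $T_1$-algebra, hence a $T_0$-algebra, so the claim follows from \cref{thm: O faithful for T0}.

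For essential surjectivity, given $L\in\bf RegFrm$ I would take $M=\overline{B(L)}$, the MacNeille completion of the Boolean envelope of $L$. By \cref{lemma: macneille is MT} this is an MT-algebra, and the proof of \cref{thm: essentially surj} shows $\O(M)=L$, after identifying $L$ with the frame of $\sq$-fixpoints sitting inside $M$; in particular $\O(M)$ is closed under arbitrary joins in $M$. It then remains to see that $M$ is a $T_3$-algebra. First, $M$ is a $T_{1/2}$-algebra by \cref{T_1/2}. Since $L$ is regular it is subfit (a classical fact of pointfree topology; see, e.g., \cite{PicadoPultr2012}), so $\O(M)=L$ is subfit and \cref{Sfit} promotes $M$ to a $T_1$-algebra. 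Finally, for the approximation clause of \cref{def: T3}, fix $a\in\O(M)=L$; by \cref{regular implies regular} the set $\{b\in\O(M)\mid b\lhd a\}$ equals $\{b\in L\mid b\prec a\}$, whose join (computed equivalently in $L$, in $\O(M)$, or in $M$) is $a$ because $L$ is regular. Hence $M\in\MT_3$ with $\O(M)\cong L$, so the restriction of $\O$ is essentially surjective.

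I do not foresee a genuine obstacle here; the one point that needs care is invoking the classical implication ``regular frame $\Rightarrow$ subfit frame'', since that is precisely what lets \cref{Sfit} lift the $T_{1/2}$-algebra $\overline{B(L)}$ to the $T_1$-algebra demanded by the definition of a regular MT-algebra. Everything else is routine bookkeeping with \cref{regular implies regular,thm: essentially surj,T_1/2}, together with the observation recorded before \cref{thm: MT to Frm} that $\O(M)$ is closed under arbitrary joins in $M$ (so that all the relevant suprema may be computed indifferently in $M$, $\O(M)$, or $L$).
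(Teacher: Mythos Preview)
Your argument is correct. The only place where it diverges from the paper's proof is in showing that $\overline{B(L)}$ is a $T_1$-algebra. You invoke the classical implication ``regular frame $\Rightarrow$ subfit frame'' and then appeal to \cref{Sfit}; the paper instead argues directly, without leaving the MT-algebra $\overline{B(L)}$: starting from the $T_{1/2}$ decomposition $a=\bigvee_{s\in S}(u_s\wedge\neg v_s)$ with $u_s,v_s\in\O(M)$, it uses the approximating property of $\lhd$ on $\O(M)$ to rewrite each $u_s$ as $\bigvee\{\diamond b\mid b\in\O(M),\ \diamond b\le u_s\}$, whence each $u_s\wedge\neg v_s$ is a join of closed elements and so is $a$. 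Your route is shorter and reuses \cref{Sfit}, at the cost of importing a pointfree fact from \cite{PicadoPultr2012}; the paper's route is self-contained and, incidentally, shows concretely how regularity of $\O(M)$ manufactures the required closed elements. Otherwise the two proofs are identical.
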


\begin{proof}

That the restriction $\O:\MT_3\to\bf RegFrm$ is well defined follows from \cref{regular implies regular}, and that it
is faithful follows from \cref{thm: O faithful for T0}. To see that it 
is essentially surjective, let $L \in \bf RegFrm$. By the proof of \cref{thm: essentially surj}, $\O\left(\overline{B(L)}\right)=L$. Since $L$ is a regular frame, using \cref{regular implies regular} again, $a = \bigvee\{b \in \O(M) \mid b \lhd a \}$ for every $a \in \O\left(\overline{B(L)}\right)$. It remains to see that $\overline{B(L)}$ is a $T_1$-algebra. By \cref{T_1/2}, $\overline{B(L)}$ is a $T_{1/2}$-algebra. Let $a\in\overline{B(L)}$. Then $a = \bigvee S$ for some set $S$ of locally closed elements of $\overline{B(L)}$. Let $s \in S$. Then $s = u \wedge \neg v$ for some $u , v \in \O\left(\overline{B(L)}\right)$. Therefore,  
\[
u=\bigvee \{b \in \O(M) \mid \diamond b \leq u\}=\bigvee \{\diamond b \mid b\in\O(M), \diamond b \leq u\}.
\] Thus, $s=\bigvee \{\diamond b \wedge \neg v \mid b,v\in\O(M), \diamond b \leq u\}$. Consequently, every element of $S$ is a join of closed elements, so $a$ is a join of closed elements, and hence $\overline{B(L)}$ is a $T_1$-algebra.  
\end{proof}

While it remains open whether $\O:\MT_3\to\bf RegFrm$ is full, we conclude this section by showing that it is indeed an equivalence when restricted to the corresponding spatial categories. Let $\bf SRegFrm$ be the full subcategory of $\bf RegFrm$ consisting of spatial regular frames.

\begin{theorem} \label{thm: regular}
The restriction $\O:\SMT_3\to\bf SRegFrm$ is an equivalence.
\end{theorem}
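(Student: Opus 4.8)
The plan is to follow the same template already used to prove \cref{thm: full} (the equivalence $\O:\SMT_{\bf sob}\to\SFrm$) and \cref{thm: equivalence for spatial T2} (the equivalence $\O:\SMT_2\to\bf SHFrm$), specializing it to the regular setting. Concretely, I would verify that $\O$ restricted to $\SMT_3$ lands in $\bf SRegFrm$, and then that it is essentially surjective, full, and faithful (cf.~\cite[p.~93]{MacLane}).

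First I would check that the restriction is well defined, i.e.~$M\in\SMT_3$ implies $\O(M)\in\bf SRegFrm$: regularity of $\O(M)$ is immediate from \cref{essentially surjective MT3} (using \cref{regular implies regular}), and spatiality of $\O(M)$ is immediate from \cref{SMT implies SFrm} since $M$ is spatial. Faithfulness is free from \cref{thm: O faithful for T0}, as every $T_3$-algebra is a $T_0$-algebra. For essential surjectivity, let $L\in\bf SRegFrm$. Since $L$ is spatial, $L\cong\Omega(\pt(L))$, so I would take $M=\P(\pt(L))$, which is a spatial MT-algebra with $\O(M)=\Omega(\pt(L))\cong L$ by \cref{cor: OP=Omega}. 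It then remains to see $M\in\MT_3$; by \cref{lem: MT3 and Top3}(1) this amounts to showing $\pt(L)$ is a regular topological space, which follows because $\pt(L)$ is sober (hence $T_0$) and $\Omega(\pt(L))$ is a regular frame, and it is standard that a $T_0$ space whose open-set frame is regular is a regular space (see, e.g., \cite[p.~91]{PicadoPultr2021}); alternatively one cites that every regular frame is Hausdorff and subfit so $\pt(L)$ is Hausdorff, but the direct $T_0$-plus-regular-frame argument is cleanest.

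Fullness is the main point, and here I would reuse the argument of \cref{thm: full} essentially verbatim. Given $M,N\in\SMT_3$ and a frame homomorphism $h:\O(M)\to\O(N)$, form $\pt(h):\pt(\O(N))\to\pt(\O(M))$; since $M$ and $N$ are sober (every regular MT-algebra is Hausdorff, hence sober, by the proposition preceding \cref{thm: duality for T3} together with \cref{prop: T2 implies sober}), the maps $\vartheta_M,\vartheta_N$ are homeomorphisms by \cref{rem: sober implies theta homeo}. Set $f=\vartheta_M^{-1}\circ\pt(h)\circ\vartheta_N:\at(N)\to\at(M)$, a continuous map, and $g=\eta_N^{-1}\circ\P(f)\circ\eta_M:M\to N$, an MT-morphism (using that $\eta_M,\eta_N$ are MT-isomorphisms by \cref{thm: SMT dual to Top}). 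The Claim inside the proof of \cref{thm: full}, that $\pt(h)^{-1}(\vartheta_M(\eta_M(a)))=\vartheta_N(\eta_N(h(a)))$ for $a\in\O(M)$, goes through unchanged, and the same diagram chase then gives $g|_{\O(M)}=h$, so $\O(g)=h$.

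I expect no genuine obstacle: the entire argument is a specialization of \cref{thm: full}, and the only new content is the topological fact that $\Omega(X)$ regular plus $X$ being $T_0$ forces $X$ regular, which is classical. If one wanted a self-contained proof one could instead argue, as in \cref{thm: equivalence for spatial T2}, that $\bf SRegFrm\subseteq\bf SHFrm$ (every regular frame is Hausdorff) and then invoke \cref{thm: equivalence for spatial T2} together with the observation that under that equivalence a spatial Hausdorff MT-algebra $M$ with $\O(M)$ regular is itself regular---using \cref{lem: MT3 and Top3}(1) and $M\cong\P(\at(M))$---so that $\O:\SMT_3\to\bf SRegFrm$ is the restriction of an equivalence to a pair of matching full subcategories and hence an equivalence. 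Either route closes the argument.
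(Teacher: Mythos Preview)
Your proposal is correct. Your primary route---directly verifying that the restricted $\O$ is essentially surjective, full, and faithful by rerunning the argument of \cref{thm: full}---works without issue, and your justifications (well-definedness via \cref{essentially surjective MT3} and \cref{SMT implies SFrm}, faithfulness via \cref{thm: O faithful for T0}, essential surjectivity via $\P(\pt(L))$ and \cref{lem: MT3 and Top3}(1), fullness via soberness and the $\vartheta$-homeomorphisms) are all sound.

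The paper, however, takes precisely the shortcut you sketch in your final paragraph: it simply invokes \cref{thm: equivalence for spatial T2} (the equivalence $\O:\SMT_2\to\SHFrm$) and \cref{essentially surjective MT3}, observing that $\SMT_3$ and $\bf SRegFrm$ are the matching full subcategories picked out by regularity on each side, so the restriction of an equivalence is again an equivalence. Your direct approach is more self-contained but redundant given the earlier results; the paper's two-line proof exploits the fact that all the hard work (fullness in particular) was already absorbed into \cref{thm: full} and \cref{thm: equivalence for spatial T2}. Since you explicitly note this alternative and recognize it closes the argument, there is no gap---just a difference in economy.
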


\begin{proof}

By \cref{thm: equivalence for spatial T2}, $\O:\SMT_2 \to \bf SHFrm$ is an equivalence. Now apply
\cref{essentially surjective MT3}. 
  \end{proof}

\section{Completely regular and normal MT-algebras}\label{sec:8}

In this final section, we extend the concepts of completely regular and normal spaces to MT-algebras. We show that a $T_1$-algebra $M$ is completely regular (resp.~normal) iff $\O(M)$ is a completely regular (resp.~normal) frame. We also develop a version of Urysohn's lemma for MT-algebras to show that every normal MT-algebra is completely regular, and restrict the dual adjunctions and dual equivalences of the previous section to the setting of completely regular and normal MT-algebras.

We start by recalling the definition of a completely regular frame (see, e.g., \cite[p.~91]{PicadoPultr2012}). Let $L$ be a frame and $a,b\in L$. Then $b$ is {\em completely below} $a$, written $b \prec \prec a$, provided there is a family $\{ c_p \mid p \in [0,1]\cap\mathbb Q \} \subseteq L$
such that $b \leq c_0$, $c_1 \leq a$, and $p < q \Rightarrow c_p \prec c_q$. We call $L$ {\em completely regular} if $\prec \prec$ is approximating, meaning that $a=\bigvee \{b \in L \mid b \prec \prec a\}$ for each $a \in L$. Clearly $\prec \prec$ is a subrelation of $\prec$, and hence every completely regular frame is regular. 

We generalize the notion of a completely regular frame to MT-algebras. 

\begin{definition} 
Let $M\in\MT$. For $a , b \in M$ define $b \ {\lhd\lhd} \ a$ 
provided there is a family $\{ c_p \}\subseteq M$, where $p \in [0,1]\cap\mathbb Q$,  
such that $b \leq c_0$, $c_1 \leq a$, and $p < q$ implies $c_p \lhd c_q$.
\end{definition}

\begin{remark} \label{rem: comp reg}
Clearly ${\llcurly}$ is a subrelation of $\lhd$. Moreover, we can always choose the $c_p$ to be either open or closed.
\end{remark}

As an immediate consequence of \cref{prec}, we obtain: 

\begin{lemma} \label{lem: completely below}
Let $M$ be an MT-algebra. 
\begin{enumerate}
\item $a\llcurly b \implies a \leq b$.
\item $0\llcurly a\llcurly 1$.
\item $x \leq a\llcurly b \leq y \implies x\llcurly y$.
\item $a\llcurly b \iff \neg b\llcurly \neg a$.
\item $a_i\llcurly b_i$ for $ i=1,2 \implies (a_1 \vee a_2)\llcurly (b_1 \vee b_2)$ and $(a_1 \wedge a_2)\llcurly (b_1 \wedge b_2)$.
\end{enumerate}
\end{lemma}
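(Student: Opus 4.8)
The plan is to derive \cref{lem: completely below} directly from \cref{prec}, treating the five clauses in order and mimicking the standard argument for the rather-below relation $\lhd$. The relation $\llcurly$ is defined via an interpolating family $\{c_p\mid p\in[0,1]\cap\Q\}$ with $c_p\lhd c_q$ whenever $p<q$, so in each case I would unpack the witnessing family for the hypothesis and produce a witnessing family for the conclusion.

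For (1), if $a\llcurly b$ then picking any $p<q$ in $[0,1]\cap\Q$ gives $a\le c_0\le c_p\lhd c_q\le c_1\le b$, and $\lhd$ implies $\le$ by \cref{prec}(1), so $a\le b$; more simply, $a\le c_0\lhd c_1\le b$ already suffices. For (2), the constant family $c_p=a$ (for all $p$) witnesses $0\llcurly a$, using $0\lhd a$ from \cref{prec}(2) and the reflexive-looking fact that we only need $c_p\lhd c_q$ for $p<q$ — wait, we need $a\lhd a$, which is false in general, so instead I take $c_p=0$ for $p<1$ and... no. The correct witnessing family for $0\llcurly a$ is: interpolate once using $0\lhd a$, i.e.\ set $c_p=0$ for $p\le 1/2$ and $c_p=a$ for $p>1/2$; then for $p<q$ the only nontrivial case is $p\le 1/2<q$, giving $c_p=0\lhd a=c_q$ by \cref{prec}(2). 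Similarly $a\llcurly 1$ is witnessed by $c_p=a$ for $p<1/2$ and $c_p=1$ for $p\ge 1/2$, using $a\lhd 1$. For (3), if $x\le a\llcurly b\le y$ with witnessing family $\{c_p\}$ for $a\llcurly b$, then the same family works for $x\llcurly y$: we need $x\le c_0$ (from $x\le a\le c_0$) and $c_1\le y$ (from $c_1\le b\le y$), and the interpolation condition $c_p\lhd c_q$ is unchanged.

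For (4), if $a\llcurly b$ with family $\{c_p\}$, then define $d_p=\neg c_{1-p}$; since $p<q$ iff $1-q<1-p$, we get $c_{1-q}\lhd c_{1-p}$, hence $d_p=\neg c_{1-q}\lhd\neg c_{1-p}=d_q$ by \cref{prec}(4); moreover $\neg b\le d_0=\neg c_1$ (from $c_1\le b$) and $d_1=\neg c_0\le\neg a$ (from $a\le c_0$), so $\neg b\llcurly\neg a$. For (5), given families $\{c_p^{(i)}\}$ witnessing $a_i\llcurly b_i$, the pointwise joins $c_p:=c_p^{(1)}\vee c_p^{(2)}$ satisfy $c_p\lhd c_q$ for $p<q$ by \cref{prec}(5), and $a_1\vee a_2\le c_0$, $c_1\le b_1\vee b_2$, giving $(a_1\vee a_2)\llcurly(b_1\vee b_2)$; the meet case is analogous using pointwise meets and the meet clause of \cref{prec}(5). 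I expect no real obstacle here — the only point requiring mild care is the bookkeeping in clauses (2) and (4), where one must explicitly exhibit a step-function-style family rather than a constant family, since $\lhd$ is not reflexive; everything else is a direct transcription of the rather-below proof with $\lhd$ in place of $\prec$.
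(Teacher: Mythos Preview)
Your approach matches the paper's: it states the lemma ``as an immediate consequence of \cref{prec}'' and gives no further argument, and your clause-by-clause unpacking is exactly what is implicit there. Clauses (1), (3), and (5) are fine; in (4) your idea is right but you have a slip of the pen --- with $d_p=\neg c_{1-p}$ one has $d_p=\neg c_{1-p}$ (not $\neg c_{1-q}$), and from $c_{1-q}\lhd c_{1-p}$ \cref{prec}(4) gives $\neg c_{1-p}\lhd\neg c_{1-q}$, i.e.\ $d_p\lhd d_q$, which is what you want.

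There is, however, a genuine gap in your treatment of (2). Your step-function family for $0\llcurly a$ sets $c_p=a$ for all $p>1/2$; then for $1/2<p<q$ you would need $a\lhd a$, which is $\diamond a\le\square a$ and fails in general. (The analogous problem occurs in your family for $a\llcurly 1$, where $p<q<1/2$ forces $a\lhd a$.) The fix is simpler than what you attempted: take the \emph{constant} family $c_p=0$ to witness $0\llcurly a$, and the constant family $c_p=1$ to witness $a\llcurly 1$. These work because $0\lhd 0$ and $1\lhd 1$ do hold (apply \cref{prec}(2) with $a=0$ and $a=1$, or compute directly: $\diamond 0=0\le 0=\square 0$ and $\diamond 1=1\le 1=\square 1$). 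Your instinct that $\lhd$ is not reflexive was correct, but the endpoints $0$ and $1$ are exactly the elements for which reflexivity \emph{does} hold.
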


\begin{definition} \label{def: T3 and half}
We call an MT-algebra $M$ a {\em completely regular algebra} or a {\em $T_{3\frac{1}{2}}$-algebra} if $M$ is a $T_1$-algebra and for all $a \in \O(M)$ we have $a = \bigvee \{b \in \O(M) \mid b \llcurly a\}$. 
\end{definition}

\begin{remark}\label{T 3(1/2)}
Equivalently, a $T_1$-algebra $M$ is a completely regular algebra provided for all $c \in \C(M)$ we have $c= \bigwedge \{d \in \C(M) \mid c \llcurly d\}$.
\end{remark}

\begin{lemma}\label{opens replaced by regular opens}
Let $M$ be an MT-algebra.
\begin{enumerate}
\item If $a\in\O(M)$ and $b\llcurly a$, then $\square \diamond b\llcurly a$.
\item $a=\bigvee \{b \in \RO(M) \mid b \llcurly a \}$ for each $a \in \O(M)$.
\item $c=\bigwedge \{d \in \RC(M) \mid c \llcurly d \}$ for each $c \in \C(M)$.
\end{enumerate}
\end{lemma}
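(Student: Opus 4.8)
The plan is to establish (1) first --- it carries all the content --- and then read off (2) and (3) from it by a short squeeze, using the standing hypothesis that $M$ is completely regular (\cref{def: T3 and half,T 3(1/2)}) together with the standard facts that $\square\diamond b\in\RO(M)$ and $\diamond\square d\in\RC(M)$ for arbitrary $b,d\in M$ (these are the usual $\mathsf{S4}$ idempotencies $\square\diamond\square\diamond=\square\diamond$ and $\diamond\square\diamond\square=\diamond\square$).

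For (1), fix $a$ and $b$ with $b\llcurly a$ and choose a witnessing family $\{c_p\mid p\in[0,1]\cap\Q\}$, so $b\le c_0$, $c_1\le a$, and $p<q$ implies $c_p\lhd c_q$. The idea is to compress this scale into the top half of the unit interval: put $d_p:=c_{(1+p)/2}$ for $p\in[0,1]\cap\Q$. This is well defined since $(1+p)/2\in[\tfrac{1}{2},1]\cap\Q$; for $p<q$ we get $(1+p)/2<(1+q)/2$ and hence $d_p\lhd d_q$; and $d_1=c_1\le a$. Finally, from $b\le c_0\lhd c_{1/2}$ we obtain $\diamond b\le\diamond c_0\le\square c_{1/2}\le c_{1/2}=d_0$, so $\square\diamond b\le\diamond b\le d_0$. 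Thus $\{d_p\}$ witnesses $\square\diamond b\llcurly a$.

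For (2), let $a\in\O(M)$. Complete regularity gives $a=\bigvee\{b\in\O(M)\mid b\llcurly a\}$. For each such $b$, part (1) yields $\square\diamond b\llcurly a$, we have $\square\diamond b\in\RO(M)$, and $b\le\square\diamond b$ since $b$ is open; as $b'\llcurly a$ implies $b'\le a$ by \cref{lem: completely below}, we conclude $a=\bigvee\{b\in\O(M)\mid b\llcurly a\}\le\bigvee\{b'\in\RO(M)\mid b'\llcurly a\}\le a$, forcing equality. Part (3) is the order-dual and is obtained by applying (1) to complements: for $c\in\C(M)$ and $c\llcurly d$ with $d\in\C(M)$, \cref{lem: completely below} gives $\neg d\llcurly\neg c$ with $\neg c,\neg d\in\O(M)$, so (1) yields $\neg\diamond\square d=\square\diamond\neg d\llcurly\neg c$, i.e.\ $c\llcurly\diamond\square d$ by \cref{lem: completely below} again; since $\diamond\square d\in\RC(M)$ and $\diamond\square d\le d$, the same squeeze against $c=\bigwedge\{d\in\C(M)\mid c\llcurly d\}$ (\cref{T 3(1/2)}) gives $c=\bigwedge\{d'\in\RC(M)\mid c\llcurly d'\}$.

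There is no real obstacle here: the content is entirely in the reindexing of (1), where the only thing to watch is that the rescaled parameters stay in $[0,1]\cap\Q$ while leaving enough room below $d_0$ to absorb $\square\diamond b$. Everything else is formal manipulation with \cref{lem: completely below}, complete regularity, and the two idempotency identities for $\square\diamond$ and $\diamond\square$.
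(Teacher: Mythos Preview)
Your proof is correct. The substantive difference from the paper is in part~(1). The paper invokes \cref{rem: comp reg} to choose the witnessing family $\{c_p\}$ inside $\C(M)$, then replaces each $c_p$ by $\square c_p$: closedness of $c_0$ gives $\diamond b\le c_0$ and hence $\square\diamond b\le\square c_0$, closedness of each $c_p$ gives $\diamond\square c_p\le c_p\le\square c_q$, and openness of $a$ gives $\square c_1\le a$. Your reindexing trick $d_p=c_{(1+p)/2}$ achieves the same end without appealing to the remark that the $c_p$ may be taken closed; you instead spend one step of the original scale ($c_0\lhd c_{1/2}$) to absorb $\square\diamond b$. Both arguments are short and clean; the paper's version additionally delivers an \emph{open} witnessing family for $\square\diamond b\llcurly a$, while yours is agnostic about the type of the $d_p$.

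For (2) and (3) you do exactly what the paper does (it just writes ``(2) follows from (1) and (3) follows from (2)''), only spelled out. Your observation that complete regularity is being used here is correct and matches the paper's intended context: without it the equalities in (2) and (3) can fail, so reading the hypothesis as the standing assumption from \cref{def: T3 and half} is the right move.
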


\begin{proof}
(1) 
By \cref{rem: comp reg}, there is a family $\{ c_p \in \C(M) \mid p \in [0,1]\cap\mathbb Q \}$ such that $b \leq c_0$, $c_1 \leq a$, and $p < q$ implies $c_p \lhd c_q$. Therefore, $\square \diamond b \leq \square c_0, \square c_1 \leq a, \text{ and }  p < q \Rightarrow \diamond \square c_p \leq c_p \leq \square c_q$. Thus, 
the family $\{ \square c_p \in \O(M) \mid p \in [0,1]\cap\mathbb Q \}$ witnesses that 
$\square \diamond b\llcurly a$.

(2) follows from (1) and (3) follows from (2).
\end{proof}

As an immediate consequence of \cref{regular implies regular,opens replaced by regular opens}, we obtain:

\begin{theorem} \label{thm: cmpl reg}
A $T_1$-algebra $M$ is completely regular iff $\O(M)$ is a completely regular frame.
\end{theorem}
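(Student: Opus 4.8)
The plan is to chase the relations $\llcurly$ (on $M$) and $\prec\prec$ (on the frame $\O(M)$) back and forth, using that both are built by iterating $\lhd$ and $\prec$ respectively, together with the already-established fact (\cref{regular implies regular}) that $\lhd$ and $\prec$ coincide on $\O(M)$. First I would record the key comparison lemma: for $a,b\in\O(M)$ one has $b\llcurly a$ (in $M$) if and only if $b\prec\prec a$ (in the frame $\O(M)$). The nontrivial direction is that $b\llcurly a$ with open $a,b$ yields an interpolating $\Q\cap[0,1]$-indexed family that can be taken \emph{inside} $\O(M)$: given any interpolating family $\{c_p\}\subseteq M$ witnessing $b\llcurly a$, apply $\square\diamond(\cdot)$ (or just $\square(\cdot)$ to the closed representatives, as in \cref{opens replaced by regular opens}) to replace each $c_p$ by an open element, using \cref{a prec b} to see that $\diamond\square c_p \le c_p \le \square c_q$ still forces $\square c_p \lhd \square c_q$, and $\lhd=\prec$ on opens by \cref{regular implies regular}. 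The reverse direction is immediate since a $\prec\prec$-witnessing family in $\O(M)$ is in particular a $\llcurly$-witnessing family in $M$, again by \cref{regular implies regular}.

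With that comparison in hand, the theorem becomes almost a tautology. If $M$ is a completely regular $T_1$-algebra, then for each $a\in\O(M)$ we have $a=\bigvee\{b\in\O(M)\mid b\llcurly a\}=\bigvee\{b\in\O(M)\mid b\prec\prec a\}$, so $\prec\prec$ is approximating on $\O(M)$ and $\O(M)$ is completely regular. Conversely, if $\O(M)$ is a completely regular frame, then in particular it is regular, hence subfit (every regular frame is subfit, or invoke that regular frames are Hausdorff and subfit); combined with $M$ being $T_1$ this is consistent, but more to the point: complete regularity of $\O(M)$ gives $a=\bigvee\{b\in\O(M)\mid b\prec\prec a\}=\bigvee\{b\in\O(M)\mid b\llcurly a\}$ for each $a\in\O(M)$, which is exactly the defining condition of a completely regular MT-algebra once we note $M$ is assumed $T_1$. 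So both implications reduce to the comparison lemma plus unwinding \cref{def: T3 and half} and the definition of a completely regular frame.

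The only real obstacle is the interpolation step in the comparison lemma: turning an arbitrary witnessing family in $M$ into one landing in $\O(M)$. But this has essentially been done already in \cref{opens replaced by regular opens}(1), whose statement $\square\diamond b\llcurly a$ (with the family $\{\square c_p\}\subseteq\O(M)$) is precisely what is needed; so in the write-up I would simply cite \cref{opens replaced by regular opens} and \cref{regular implies regular} rather than redo the computation. Everything else is routine manipulation of suprema, and the statement ``as an immediate consequence of \cref{regular implies regular,opens replaced by regular opens}'' in the excerpt signals exactly this: the proof is a two-line deduction once those two lemmas are available.
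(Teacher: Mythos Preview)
Your proposal is correct and matches the paper's approach exactly: the paper states the theorem as an immediate consequence of \cref{regular implies regular} and \cref{opens replaced by regular opens}, and your comparison lemma (that $b\llcurly a$ iff $b\prec\prec a$ for $a,b\in\O(M)$) is precisely what those two results combine to give. Your identification of the only nontrivial step---replacing an arbitrary interpolating family by one in $\O(M)$ via $\square(\cdot)$ applied to closed representatives, exactly as in the proof of \cref{opens replaced by regular opens}(1)---is spot on.
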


Also, as an immediate consequence of \cref{rem: comp reg}, we get:

\begin{lemma} \label{lem: T3 and half implies T3}
If $M$ is a completely regular MT-algebra, then $M$ is a regular MT-algebra.
\end{lemma}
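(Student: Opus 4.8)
The plan is to reduce everything to the observation, already isolated in \cref{rem: comp reg}, that $\llcurly$ is a subrelation of $\lhd$. First I would note that the $T_1$ clause in \cref{def: T3} (regular algebra) and the $T_1$ clause in \cref{def: T3 and half} (completely regular algebra) are literally identical, so assuming that $M$ is completely regular hands us the $T_1$ requirement for free. Thus the only thing left to establish is the approximation property $a = \bigvee\{b \in \O(M) \mid b \lhd a\}$ for every $a \in \O(M)$.

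Then, fixing $a \in \O(M)$, I would start from the hypothesis $a = \bigvee\{b \in \O(M) \mid b \llcurly a\}$ and compare the two index sets. By \cref{rem: comp reg}, $b \llcurly a$ implies $b \lhd a$, so $\{b \in \O(M) \mid b \llcurly a\} \subseteq \{b \in \O(M) \mid b \lhd a\}$, whence $a \le \bigvee\{b \in \O(M) \mid b \lhd a\}$. For the reverse inequality, each $b$ in the larger set satisfies $b \le a$ by \cref{prec}, so the join is $\le a$. Combining the two inequalities gives the desired equality, and hence $M$ is a $T_3$-algebra.

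I do not expect any genuine obstacle here: the argument is pure bookkeeping once \cref{rem: comp reg} is in hand. The only point that would merit an explicit line of justification is the inclusion of index sets, i.e.\ that a chain $\{c_p\}$ witnessing $b \llcurly a$ already witnesses $b \lhd a$ --- but this is precisely the content of \cref{rem: comp reg} (choose any rationals $p<q$ in $[0,1]$ and chain $b \le c_0 \le c_p \lhd c_q \le c_1 \le a$, then invoke \cref{a prec b}), so nothing new needs to be proved.
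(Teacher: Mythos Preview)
Your proposal is correct and matches the paper's approach exactly: the paper states the lemma as an immediate consequence of \cref{rem: comp reg} (that $\llcurly$ is a subrelation of $\lhd$), which is precisely the reduction you carry out. Your write-up simply spells out the bookkeeping that the paper leaves implicit.
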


Let $\MT_{3\frac{1}{2}}$ be the full subcategory of $\MT_3$ consisting of completely regular MT-algebras. \cref{nonspatial} shows that there are completely regular MT-algebras that are not spatial. We let $\SMT_{3\frac{1}{2}} = \MT_{3\frac{1}{2}} \cap \SMT$ and $\Top_{3\frac{1}{2}}$ be the full subcategory of $\Top_3$ consisting of completely regular spaces.

\begin{lemma} \label{lem: 3 and 1/2}
Let $ X \in \Top$ and $M\in\MT$. 
\begin{enumerate}
\item $X \in \Top_{3\frac{1}{2}}$ iff $ (\P(X),\int) \in \MT_{3\frac{1}{2}}$.
\item $M\in\MT_{3\frac{1}{2}}$ implies $\at(M) \in \Top_{3\frac{1}{2}}$.
\end{enumerate}
\end{lemma}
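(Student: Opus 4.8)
The plan is to mimic, almost verbatim, the proofs of the analogous facts for the $T_1$, $T_2$, and $T_3$ levels, namely \cref{lem: T1}, \cref{lem:T_2}, and \cref{lem: MT3 and Top3}. In other words, I would reduce the complete regularity statements for $(\P(X),\int)$ and for $\at(M)$ to the corresponding statements about the relation $\llcurly$ on an MT-algebra, using \cref{lem: completely below} and \cref{opens replaced by regular opens} in the same way that \cref{lem: MT3 and Top3} uses \cref{prec}.

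For part (1), I would first record the translation between the topological relation $\prec\prec$ on $\Omega(X)$ and the MT-relation $\llcurly$ on $(\P(X),\int)$: for $U,V\in\Omega(X)$ one has $U\llcurly V$ iff $U\prec\prec V$ in the frame sense. This follows because the defining chains $\{c_p\}$ can, by \cref{rem: comp reg}, be taken to consist of open elements of $(\P(X),\int)$, i.e.\ open subsets of $X$, and then $c_p\lhd c_q$ reduces to $\overline{c_p}\subseteq c_q$, which is exactly $c_p\prec c_q$ in $\Omega(X)$ by \cref{regular implies regular} (together with the fact that $\prec\prec$ unwinds to iterated $\prec$). Given this, $X\in\Top_{3\frac12}$ means $X$ is $T_1$ and every $U\in\Omega(X)$ satisfies $U=\bigcup\{V\in\Omega(X)\mid V\prec\prec U\}$, which by the translation and \cref{lem: T1}(1) is precisely the statement $(\P(X),\int)\in\MT_{3\frac12}$ (the defining join condition on arbitrary $a\in\P(X)$ being automatic once it holds for opens, since complete regularity of an MT-algebra is phrased via $\O(M)$).

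For part (2), I would follow the pattern of \cref{lem: MT3 and Top3}(2). Since $M\in\MT_{3\frac12}$ implies $M\in\MT_1$ (by \cref{lem: T3 and half implies T3} and the fact that regular algebras are $T_1$), \cref{lem: T1}(2) gives $\at(M)\in\Top_1$. To get complete regularity of $\at(M)$, take $a\in\O(M)$ and $x\in\eta(a)$, so $x\le a$; by complete regularity of $M$ there is $b\in\O(M)$ with $x\le b\llcurly a$, witnessed by a chain $\{c_p\}$ with $b\le c_0$, $c_1\le a$, and $c_p\lhd c_q$ for $p<q$. Applying $\eta$ (using \cref{rem: closed sets}, which turns $c\lhd d$, i.e.\ $\diamond c\le\square d$, into $\overline{\eta(c)}\subseteq\int(\eta(d))$) produces a chain of opens $\{\eta(c_p)\}$ in $\at(M)$ witnessing $\eta(b)\prec\prec\eta(a)$ in $\Omega(\at(M))$, with $x\in\eta(b)$. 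Hence $\eta(a)=\bigcup\{\,\eta(b)\mid \eta(b)\prec\prec\eta(a)\,\}$ for every $a\in\O(M)$, and since these $\eta(a)$ are exactly the opens of $\at(M)$, the space $\at(M)$ is completely regular; combined with $\at(M)\in\Top_1$ this gives $\at(M)\in\Top_{3\frac12}$.

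The only genuinely delicate point—and the one I would state carefully rather than wave through—is the equivalence $U\llcurly V \iff U\prec\prec V$ in the first part, and more generally the passage between the MT-relation $\lhd$ and the frame relation $\prec$ under $\eta$ in the second part. Everything else is a routine transcription of the $T_1/T_2/T_3$ arguments. In particular, one must use \cref{rem: comp reg} to replace an arbitrary witnessing chain by one consisting of open (or closed) elements before translating, since $\lhd$ behaves well with $\prec$ only on $\O(M)$ (\cref{regular implies regular}); this is exactly where the hypothesis ``we can always choose the $c_p$ to be either open or closed'' does its work. I expect the proof to be short: a single paragraph for (1) citing \cref{regular implies regular}, \cref{rem: comp reg}, and \cref{lem: T1}, and a single paragraph for (2) citing \cref{lem: T3 and half implies T3}, \cref{lem: T1}, \cref{rem: closed sets}, and \cref{rem: comp reg}.
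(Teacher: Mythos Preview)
Your proposal is correct and follows essentially the same approach as the paper. The only notable difference is in part~(1): the paper invokes \cref{thm: cmpl reg} directly (which already packages the equivalence between $\llcurly$ on $\O(M)$ and $\prec\prec$ on the frame $\O(M)$), whereas you unwind that equivalence inline via \cref{rem: comp reg} and \cref{regular implies regular}; for part~(2) your argument is virtually identical to the paper's, and your detour through \cref{lem: T3 and half implies T3} to obtain $M\in\MT_1$ is unnecessary since $T_1$ is part of the definition of a $T_{3\frac{1}{2}}$-algebra.
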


\begin{proof}
(1)  $X$ is a $T_{3\frac{1}{2}}$-space iff $X$ is a $T_1$-space and $U=\bigcup\{V\in \Omega(X) \mid V \prec \prec U\}$ for each $U \in \O(X)$ (see, e.g., \cite[p.~91]{PicadoPultr2012}). By \cref{thm: cmpl reg}, $X\in\Top_{3\frac{1}{2}}$ iff $(\P(X),\int) \in\MT_{3\frac{1}{2}}$.

 (2) By \cref{lem: T1}(2), $\at(M)\in\Top_1$. Let $a \in \O(M)$ and $x\in\eta(a)$. Then $x \leq a$. Since $M$ is completely regular, $a = \bigvee \{ b \in \O(M) \mid b \llcurly a \}$. Therefore, $x\le b$ for some $b\in\O(M)$ with $b \llcurly a$. The latter implies that there are $c_r \in \O(M)$ such that $b \le c_0$, $c_1 \le a$, and $p<q$ implies $c_p \lhd c_q$, so $\diamond c_p \leq \square c_q$. Thus, $\eta(b)\subseteq\eta(c_0)$, $\eta(c_1)\subseteq\eta(a)$, and by \cref{rem: closed sets}, $\overline{\eta(c_p)} \subseteq \eta(\diamond c_p) \subseteq \eta(\square c_q) \subseteq \int(\eta(c_q))$ for $ p < q$. Consequently, $x\in\eta(b)$ and $\eta(b)\prec \prec \eta(a)$, yielding that $\eta(a) = \bigcup \{ \eta(b) \in \Omega(\at(M)) \mid \eta(b) \prec \prec \eta(a)\}$. Hence, $\at(M) \in \Top_{3\frac{1}{2}}$.
 \end{proof}
 
 As an immediate consequence of \cref{thm: duality for T3,lem: 3 and 1/2}, we obtain:
 
\begin{theorem} \label{thm: duality for T3 and half}
    The dual adjunction between $\Top_3$ and $\MT_3$ restricts to a dual adjunction between $\Top_{3\frac{1}{2}}$ and $\MT_{3\frac{1}{2}}$, and the dual equivalence between $\Top_3$ and $\SMT_3$ restricts to a dual equivalence between $\Top_{3\frac{1}{2}}$ and $\SMT_{3\frac{1}{2}}$.
\end{theorem}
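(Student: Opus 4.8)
The statement \cref{thm: duality for T3 and half} is of exactly the same shape as the chain of results \cref{thm: MT0 and Top0}, \cref{thm: duality for Td}, \cref{thm: duality for T1}, \cref{thm: adj and duality for T2}, \cref{thm: duality for T3}, and the proof should proceed in the same way: it is an immediate consequence of the preceding dual adjunction/equivalence together with the two-way compatibility lemma \cref{lem: 3 and 1/2}. So I would not reprove anything from scratch; I would simply invoke the machinery already in place.

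Concretely, the plan is as follows. First, recall from \cref{thm: duality for T3} that the dual adjunction $(\P,\at)$ between $\Top_2$ and $\MT_2$ restricts to one between $\Top_3$ and $\MT_3$, and likewise for the dual equivalence between $\SMT_3$ and $\Top_3$. Since $\Top_{3\frac{1}{2}}$ and $\MT_{3\frac{1}{2}}$ are \emph{full} subcategories of $\Top_3$ and $\MT_3$ respectively (and $\SMT_{3\frac{1}{2}} = \MT_{3\frac{1}{2}}\cap\SMT$), to restrict the (dual) adjunction it suffices to check that the two functors and the two units land in the right places. For the functor $\P:\Top\to\MT$, \cref{lem: 3 and 1/2}(1) says exactly that $X\in\Top_{3\frac{1}{2}}$ implies $\P(X)=(\P(X),\int)\in\MT_{3\frac{1}{2}}$, so $\P$ restricts to $\Top_{3\frac{1}{2}}\to\MT_{3\frac{1}{2}}$. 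For the functor $\at:\MT\to\Top$, \cref{lem: 3 and 1/2}(2) says $M\in\MT_{3\frac{1}{2}}$ implies $\at(M)\in\Top_{3\frac{1}{2}}$, so $\at$ restricts to $\MT_{3\frac{1}{2}}\to\Top_{3\frac{1}{2}}$. The units $\varepsilon:1_{\Top}\to\at\circ\P$ and $\eta:1_{\MT}\to\P\circ\at$ then automatically restrict, since their components are morphisms in the full subcategories and naturality is inherited. This gives the dual adjunction between $\Top_{3\frac{1}{2}}$ and $\MT_{3\frac{1}{2}}$.

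For the dual equivalence, recall that the dual equivalence between $\Top_3$ and $\SMT_3$ is witnessed by the fact that $\varepsilon_X$ is a homeomorphism for every space $X$ (\cref{lem: epsilon homo}) and $\eta_M$ is an MT-isomorphism precisely when $M$ is spatial (\cref{thm: SMT dual to Top}). Restricting to objects of $\Top_{3\frac{1}{2}}$ and $\SMT_{3\frac{1}{2}}$, these same facts, together with \cref{lem: 3 and 1/2}, show that the unit components remain isomorphisms within the subcategories, so the dual adjunction restricts to a dual equivalence between $\Top_{3\frac{1}{2}}$ and $\SMT_{3\frac{1}{2}}$.

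There is essentially no obstacle here: all the real content lives in \cref{lem: 3 and 1/2}, which is already proved, and in the general principle that an adjunction restricts to full subcategories as soon as both functors preserve the relevant objects. The only thing to be careful about is the bookkeeping that $\Top_{3\frac{1}{2}}\subseteq\Top_3$ and $\MT_{3\frac{1}{2}}\subseteq\MT_3$ are full, which is how they were defined. Accordingly, the proof I would write is one sentence: ``This is an immediate consequence of \cref{thm: duality for T3,lem: 3 and 1/2}.''
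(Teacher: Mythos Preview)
Your proposal is correct and matches the paper's approach exactly: the paper states the theorem as ``an immediate consequence of \cref{thm: duality for T3,lem: 3 and 1/2},'' which is precisely the one-sentence proof you arrive at.
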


Let $\bf CRegFrm$ be the full subcategory of $\bf RegFrm$ consisting of completely regular frames.

\begin{proposition}\label{CRegFrm}
The restriction $\O:\MT_{3\frac{1}{2}}\to\bf CRegFrm$ is well defined, essentially surjective, and faithful.
\end{proposition}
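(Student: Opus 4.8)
The plan is to verify the three claimed properties of the restriction $\O:\MT_{3\frac{1}{2}}\to\bf CRegFrm$ in order, leaning heavily on the completely regular analogue \cref{thm: cmpl reg} of the regular-frame correspondence and mimicking the structure of the proof of \cref{essentially surjective MT3}. First I would check that the restriction is \textbf{well defined}: if $M\in\MT_{3\frac{1}{2}}$, then $M$ is in particular a $T_1$-algebra, so by \cref{thm: cmpl reg} the frame $\O(M)$ is completely regular, i.e.\ lands in $\bf CRegFrm$; and $\O$ sends MT-morphisms to frame homomorphisms by \cref{lemma: restriction mt is frame}. \textbf{Faithfulness} is immediate from \cref{thm: O faithful for T0}, since every completely regular MT-algebra is a $T_1$-algebra and hence a $T_0$-algebra.

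The substantive part is \textbf{essential surjectivity}. Given $L\in\bf CRegFrm$, I would take $M=\overline{B(L)}$, which by \cref{lemma: macneille is MT} is an MT-algebra, and recall from the proof of \cref{thm: essentially surj} that $\O(M)=L$ (identifying $L$ with a subframe of $\overline{B(L)}$ via the Funayama embedding, whose right adjoint is $\sq$). So the only thing to prove is that $\overline{B(L)}\in\MT_{3\frac{1}{2}}$. One half of this, the approximation condition $a=\bigvee\{b\in\O(M)\mid b\llcurly a\}$ for each $a\in\O(M)=L$, follows directly from \cref{regular implies regular} applied to $\llcurly$ versus $\prec\prec$ — more precisely, \cref{regular implies regular} shows $b\lhd a\iff b\prec a$ for opens, and unwinding the chain-of-rationals definitions of $\llcurly$ and $\prec\prec$ (using that by \cref{rem: comp reg} the witnessing family $\{c_p\}$ may be taken inside $\O(M)$) gives $b\llcurly a\iff b\prec\prec a$ in $L$; since $L$ is completely regular, the approximation property transfers. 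The other half is that $\overline{B(L)}$ is a $T_1$-algebra, and here I would copy the argument in the proof of \cref{essentially surjective MT3} essentially verbatim: by \cref{T_1/2} the algebra is $T_{1/2}$, so every element is a join of locally closed elements $u\wedge\neg v$ with $u,v\in\O(M)$; using the approximation $u=\bigvee\{\diamond b\mid b\in\O(M),\ \diamond b\le u\}$ (valid because a completely regular frame is regular, via \cref{lem: T3 and half implies T3} or directly from \cref{regular implies regular}), each such $u\wedge\neg v$ — and hence each element of $M$ — is a join of closed elements, so $\C(M)$ join-generates $M$.

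The main obstacle I anticipate is purely bookkeeping rather than conceptual: making the identification $b\llcurly a\iff b\prec\prec a$ precise requires checking that the two relations, defined by interpolating families indexed by $[0,1]\cap\Q$, genuinely coincide when all the intermediate terms are taken to be open elements of $M$ — this uses \cref{regular implies regular} term by term along the chain $c_p\lhd c_q\iff c_p\prec c_q$, together with \cref{rem: comp reg} to ensure the family can be chosen in $\O(M)$ in the first place. Once that lemma-level identification is in hand, both halves of "$\overline{B(L)}\in\MT_{3\frac{1}{2}}$" are short, and essential surjectivity — and with it the whole proposition — follows. One could also phrase the argument without ever passing through $\overline{B(L)}$ explicitly by noting abstractly that \cref{thm: essentially surj} produces \emph{some} MT-algebra with $\O(M)\cong L$ and then invoking \cref{thm: cmpl reg} plus the $T_1$ argument, but using $\overline{B(L)}$ keeps the proof parallel to \cref{essentially surjective MT3,essentially surjective MT1}.
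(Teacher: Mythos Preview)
Your proposal is correct and follows essentially the same route as the paper: well-definedness and faithfulness via \cref{thm: cmpl reg} and \cref{thm: O faithful for T0}, and essential surjectivity by showing $\overline{B(L)}\in\MT_{3\frac{1}{2}}$ using the $T_1$ argument from the proof of \cref{essentially surjective MT3}. The only cosmetic difference is ordering: the paper first observes that a completely regular frame is regular, invokes the proof of \cref{essentially surjective MT3} to get $T_1$, and then applies \cref{thm: cmpl reg} as a black box for the approximation condition, whereas you unpack the $\llcurly\iff\prec\prec$ identification explicitly before handling $T_1$.
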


\begin{proof}
That $\O:\MT_{3\frac{1}{2}}\to\bf CRegFrm$ is well defined follows from \cref{thm: cmpl reg} and that it is faithful from \cref{thm: O faithful for T0}. To see that $\O$ 
is essentially surjective, let $L \in \bf CRegFrm$. By the proof of \cref{thm: essentially surj}, $\O\left(\overline{B(L)}\right)=L$. Since $\O\left(\overline{B(L)}\right)$ is a completely regular frame, it is a regular frame, so $\overline{B(L)}$ is a $T_1$-algebra by the proof of \cref{essentially surjective MT3}. Applying \cref{thm: cmpl reg} again yields that 
$\overline{B(L)}$ is a completely regular MT-algebra, completing the proof.  
\end{proof}

It is unclear whether $\O:\MT_{3\frac{1}{2}}\to\bf CRegFrm$ is full. Let $\bf SCRegFrm$ be the full subcategory of $\bf CRegFrm$ consisting of spatial 
frames. By \cref{thm: regular,CRegFrm}, we have:

\begin{theorem} \label{thm: completely reg equiv}
The restriction $\O:\SMT_{3\frac{1}{2}}\to\bf SCRegFrm$ is an equivalence.
\end{theorem}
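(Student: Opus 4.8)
The plan is to derive \cref{thm: completely reg equiv} as an immediate consequence of the two results cited just before it, exactly as is done for \cref{thm: regular} from \cref{thm: equivalence for spatial T2}. First I would recall that by \cref{thm: regular} the restriction $\O:\SMT_3\to\bf SRegFrm$ is already known to be an equivalence. Since $\SMT_{3\frac{1}{2}}=\MT_{3\frac{1}{2}}\cap\SMT$ is a full subcategory of $\SMT_3$ (every completely regular MT-algebra is regular by \cref{lem: T3 and half implies T3}) and $\bf SCRegFrm$ is a full subcategory of $\bf SRegFrm$ (every completely regular frame is regular), it suffices to check that the equivalence $\O$ carries one subcategory onto the other, i.e.\ restricts and corestricts correctly and remains essentially surjective.

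The key steps, in order, are: (1) note that $\O:\SMT_{3\frac{1}{2}}\to\bf SCRegFrm$ is well defined, which is part of \cref{CRegFrm} (well definedness there gives $\O(M)\in\bf CRegFrm$ for $M\in\MT_{3\frac{1}{2}}$, and spatiality of $\O(M)$ follows from $M\in\SMT$ via \cref{SMT implies SFrm}); (2) note that it is faithful, again by \cref{thm: O faithful for T0} as recorded in \cref{CRegFrm}; (3) note that it is full, since $\O:\SMT_3\to\bf SRegFrm$ is full by \cref{thm: regular} and fullness is inherited by full subcategories on both sides; (4) verify essential surjectivity onto $\bf SCRegFrm$: given $L\in\bf SCRegFrm$, spatiality gives $L\cong\Omega(\pt(L))=\O(\P(\pt(L)))$, and $\P(\pt(L))$ is a spatial MT-algebra which is completely regular because $\pt(L)$ is a $T_{3\frac12}$-space (this follows from $L$ being a spatial completely regular, hence $T_1$, frame, so $\pt(L)$ is $T_1$, and $\Omega(\pt(L))$ completely regular forces $\pt(L)$ to be $T_{3\frac12}$), whence $\P(\pt(L))\in\SMT_{3\frac{1}{2}}$ by \cref{lem: 3 and 1/2}. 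I would present this compactly, mirroring the one-line proof style of \cref{thm: regular}.

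I do not expect any genuine obstacle here; the proof is a bookkeeping argument assembling \cref{thm: regular} and \cref{CRegFrm}. The only point requiring a moment's care is essential surjectivity: one must ensure that the witness MT-algebra produced for a spatial completely regular frame is itself \emph{completely regular} and \emph{spatial}, not merely regular and spatial, and this is exactly what \cref{lem: 3 and 1/2}(1) together with the spatialization $L\cong\Omega(\pt(L))$ supplies. Accordingly the write-up is short:

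\begin{proof}
By \cref{thm: regular}, $\O:\SMT_3\to\bf SRegFrm$ is an equivalence. By \cref{CRegFrm}, the restriction $\O:\SMT_{3\frac{1}{2}}\to\bf SCRegFrm$ is well defined and faithful; it is full because $\SMT_{3\frac{1}{2}}$ and $\bf SCRegFrm$ are full subcategories of $\SMT_3$ and $\bf SRegFrm$, respectively, and $\O:\SMT_3\to\bf SRegFrm$ is full. It remains to see that it is essentially surjective. Let $L\in\bf SCRegFrm$. Since $L$ is spatial, $L\cong\Omega(\pt(L))$. Let $M$ be the powerset of $\pt(L)$; then $M$ is a spatial MT-algebra and $\O(M)=\Omega(\pt(L))$ is isomorphic to $L$. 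Since $L$ is completely regular, it is regular, hence subfit, and so $\pt(L)$ is a $T_1$-space; together with $\Omega(\pt(L))$ being completely regular, this gives that $\pt(L)$ is a $T_{3\frac12}$-space. Therefore $M\in\MT_{3\frac{1}{2}}$ by \cref{lem: 3 and 1/2}(1), and hence $M\in\SMT_{3\frac{1}{2}}$. Thus $\O:\SMT_{3\frac{1}{2}}\to\bf SCRegFrm$ is essentially surjective, and therefore an equivalence.
\end{proof}
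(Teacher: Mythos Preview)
Your overall strategy matches the paper's: the paper simply says the result follows from \cref{thm: regular} and \cref{CRegFrm}, and you unpack exactly those ingredients (well-definedness and faithfulness from \cref{CRegFrm}, fullness inherited from \cref{thm: regular}, plus essential surjectivity). So the approach is the intended one.

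There is, however, one incorrect step in your essential surjectivity argument. You write ``$L$ is completely regular, it is regular, hence subfit, and so $\pt(L)$ is a $T_1$-space.'' Subfitness of the frame of opens does \emph{not} imply the space is $T_1$, even for sober spaces: \cref{ex: subfit not T1} exhibits a sober space $X$ with $\Omega(X)$ subfit but $X$ not $T_1$. The conclusion you want is still true, but the justification fails. Two clean fixes: (i) argue that a regular frame is a Hausdorff frame, and since $\pt(L)$ is sober (hence $T_0$), \cite[p.~43]{PicadoPultr2021} gives that $\pt(L)$ is Hausdorff, hence $T_1$; or (ii) avoid reconstructing $\pt(L)$ altogether and instead use the equivalence $\O:\SMT_3\to\SRegFrm$ from \cref{thm: regular} to obtain some $M\in\SMT_3$ with $\O(M)\cong L$, and then invoke \cref{thm: cmpl reg} (a $T_1$-algebra is completely regular iff its frame of opens is) to conclude $M\in\SMT_{3\frac12}$. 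Route (ii) is what the paper's one-line proof implicitly does, and it is shorter.
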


Finally, we turn our attention to normal MT-algebras. For this we recall 
(see, e.g., \cite[p.~91]{PicadoPultr2012}) that a frame $L$ is {\em normal} if for all $a,b \in L$ with $a \vee b=1$ there are $u , v \in L$ such that $u \wedge v=0$ and $a \vee v=1=b \vee u$. If $L$ is the frame of opens of a $T_1$-space $X$, then $L$ is normal iff $X$ is a normal space (see, e.g., \cite[p.~138]{PicadoPultr2021}).
This has an obvious generalization to $(\P(X),\int)$, which we will further generalize to arbitrary MT-algebras.

\begin{definition} \label{def: T4}
Let $M\in\MT$. We call $M$ a {\em normal algebra} or a {\em $T_4$-algebra} if $M$ is a $T_1$-algebra and for $c, d \in \C(M)$, from $c \wedge d = 0$ it follows that there exist $a, b \in \O(M)$ such that $a \wedge b = 0$, $c \leq a$, and $d \leq b$.
\end{definition}

\begin{remark}\label{normal}
Equivalently, a $T_1$-algebra $M$ is a normal MT-algebra provided for all $a,b \in \O(M)$ with $a \vee b=1$ there exist $c , d \in \C(M)$ such that $c \vee d=1$, $c \leq a$, and $d \leq b$.
\end{remark}

As an immediate consequence, we obtain:

\begin{theorem} \label{thm: normality}
A $T_1$-algebra $M$ is normal iff $\O(M)$ is normal.
\end{theorem}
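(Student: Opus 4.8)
The plan is to show directly that the two conditions in \cref{def: T4} and \cref{normal} are equivalent as stated, using the De~Morgan duality between $\O(M)$ and $\C(M)$ provided by negation, together with the fact that in a $T_1$-algebra both hypotheses are vacuously compatible with the ambient Boolean structure. Since \cref{thm: normality} is literally the assertion that ``$M$ is $T_1$ and satisfies the condition of \cref{def: T4}'' coincides with ``$M$ is $T_1$ and $\O(M)$ is normal'', and the latter unpacks to ``$M$ is $T_1$ and for all $a,b\in\O(M)$ with $a\vee b=1$ there are $u,v\in\O(M)$ with $u\wedge v=0$ and $a\vee v=1=b\vee u$'', the whole content is the passage between the closed-element formulation and the open-element formulation. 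So first I would record that $\neg$ is an order-reversing bijection $\O(M)\to\C(M)$ sending $\wedge$ to $\vee$ and $0$ to $1$.

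The first step: assume $M$ is a $T_1$-algebra satisfying the condition of \cref{def: T4}, and prove the condition of \cref{normal}. Given $a,b\in\O(M)$ with $a\vee b=1$, set $c:=\neg b$ and $d:=\neg a$; then $c,d\in\C(M)$ and $c\wedge d=\neg(a\vee b)=0$. By hypothesis there are $u,v\in\O(M)$ with $u\wedge v=0$, $c\le u$, and $d\le v$. Put $c':=\neg v$ and $d':=\neg u$; these are closed, $c'\vee d'=\neg(u\wedge v)=1$, and from $c\le u$, $d\le v$ we get $c'=\neg v\le \neg d=a$ and $d'=\neg u\le\neg c=b$. This is exactly the condition of \cref{normal}. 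The second step is the reverse implication, which is the same computation read backwards: given closed $c,d$ with $c\wedge d=0$, apply the open-element condition to $a:=\neg d$, $b:=\neg c$ (so $a\vee b=1$), obtain open $u,v$, and dualize to get disjoint opens containing $c$ and $d$. The third step is to observe that the equivalence ``condition of \cref{normal} $\iff$ $\O(M)$ is normal'' is immediate by comparing \cref{normal} with the definition of a normal frame recalled just before \cref{def: T4}, noting that the supplementary opens $u,v$ demanded by frame normality can be taken to be the witnesses $c,d$ (a closed element in $M$ need not be open, but the frame definition only asks for elements of $\O(M)$, and in fact one can instead feed the pair $\neg c,\neg d\in\O(M)$, or simply appeal to \cref{normal} being the verbatim frame condition with $c,d$ relabelled). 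Chaining the three steps gives the theorem.

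I do not expect any genuine obstacle here; this is a bookkeeping argument and the ``as an immediate consequence'' phrasing in the excerpt signals the authors see it the same way. The only point requiring a moment's care is making sure the frame-normality witnesses and the MT-normality witnesses match up without an off-by-duality error: frame normality speaks of $u,v\in L=\O(M)$ with $u\wedge v=0$ and $a\vee v=1=b\vee u$, whereas \cref{normal} produces $c,d\in\C(M)$ with $c\vee d=1$, $c\le a$, $d\le b$. To bridge these one negates once more: from $c\vee d=1$ with $c\le a$, $d\le b$ one has no immediate disjoint \emph{open} pair, so the cleanest route is to prove \cref{thm: normality} via the \cref{def: T4} formulation (disjoint closed sets separated by disjoint opens) on one side and the literal frame definition on the other, using \cref{normal} only as the stated reformulation; that keeps every implication a single application of De~Morgan. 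I would therefore phrase the proof as: ``By \cref{normal}, $M$ being normal is equivalent to: for all $a,b\in\O(M)$ with $a\vee b=1$ there are $c,d\in\C(M)$ with $c\vee d=1$, $c\le a$, $d\le b$. Negating, and using that $\neg$ is an order-reversing bijection $\C(M)\to\O(M)$, this says: for all $a',b'\in\O(M)$ with $a'\wedge b'=0$ \dots'', and then match termwise with the frame definition of normality of $\O(M)$.''
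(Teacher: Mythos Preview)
Your proposal is correct and follows the same approach the paper has in mind: the theorem is stated right after \cref{normal} with the phrase ``As an immediate consequence, we obtain,'' so the authors give no further argument and rely on exactly the De~Morgan passage between the closed-element formulation in \cref{def: T4}/\cref{normal} and the open-element frame definition of normality. Your write-up simply spells out that passage in detail; the slight hesitation in your last paragraph is unnecessary, since (as you eventually compute) from $c,d\in\C(M)$ with $c\vee d=1$, $c\le a$, $d\le b$ one sets $u=\neg d$, $v=\neg c\in\O(M)$ to get $u\wedge v=0$ and $a\vee v=1=b\vee u$, and conversely.
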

 
The well-known characterization of normal spaces (see, e.g., \cite[p.~40]{Engelking}) 
readily generalizes to MT-algebras:

\begin{lemma}\label{open and closed element inbetween}
Let $M$ be a $T_1$-algebra. Then 
 $M$ is a normal algebra iff for all $a\in\O(M)$ and $c\in\C(M)$, from $c\le a$ it follows that there exist $b\in\O(M)$ and $d\in\C(M)$ such that $c\le b\le d\le a$. 
\end{lemma}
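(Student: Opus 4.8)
The plan is to mimic the classical topological argument that normality is equivalent to the interpolation property ``closed below open'' $\Rightarrow$ ``there is an open $b$ and closed $d$ with $c\le b\le d\le a$'', but carried out inside the MT-algebra using the translation dictionary of \cref{normal}: $a\vee b = 1$ in $\O(M)$ corresponds, via complementation, to $c\wedge d = 0$ in $\C(M)$, and vice versa. The two directions are each a short computation.

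First I would prove the forward direction. Assume $M$ is normal, let $a\in\O(M)$, $c\in\C(M)$ with $c\le a$. Then $\neg a\in\C(M)$ (as $a$ is open), $c\in\C(M)$, and $c\wedge\neg a=0$ since $c\le a$. By \cref{def: T4} there are $u,v\in\O(M)$ with $u\wedge v=0$, $c\le u$, and $\neg a\le v$. Set $b:=u$ and $d:=\neg v$. Then $d\in\C(M)$, $c\le b$, $b=u\le\neg v=d$ because $u\wedge v=0$, and $d=\neg v\le a$ because $\neg a\le v$. Thus $c\le b\le d\le a$, as required.

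For the converse, assume the interpolation property and verify normality via \cref{normal}: let $a,b\in\O(M)$ with $a\vee b=1$. Then $\neg a\in\C(M)$, $\neg a\le b$ (from $a\vee b=1$), and $b\in\O(M)$, so the hypothesis gives $u\in\O(M)$ and $d\in\C(M)$ with $\neg a\le u\le d\le b$. Put $c:=\neg u\in\C(M)$; then $c\le a$ since $\neg a\le u$, $d\le b$ already, and $c\vee d=\neg u\vee d=1$ since $u\le d$. Hence $c,d\in\C(M)$ witness the condition of \cref{normal}, and $M$ is normal.

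I do not expect any genuine obstacle here: the whole content is the De Morgan bookkeeping that turns an open cover condition into a closed interpolation condition, exactly as in the topological case, and $M$ being a complete Boolean algebra makes all complements and the passage between $\O(M)$ and $\C(M)$ (via $\neg$) available. The only point to be a little careful about is that we are only asked to characterize normality \emph{for $T_1$-algebras}; since $T_1$-ness is assumed on both sides of the iff and is never used in the manipulations above, it simply carries along and needs no separate attention.
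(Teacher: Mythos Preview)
Your proof is correct and is exactly the standard De Morgan bookkeeping the paper has in mind; in fact the paper does not write out a proof at all, merely remarking that the well-known characterization of normal spaces ``readily generalizes to MT-algebras.'' Your argument is precisely that generalization.
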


\begin{theorem} \label{MT4 implies MT3}
Every normal MT-algebra is regular.
\end{theorem}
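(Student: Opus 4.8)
The plan is to reduce everything to the interpolation property of normality recorded in \cref{open and closed element inbetween}, together with the fact that in a $T_1$-algebra $M$ the closed elements join-generate $M$ (equivalently, the open elements meet-generate it). Since a normal algebra is by definition a $T_1$-algebra, by \cref{def: T3} it only remains to verify that $a=\bigvee\{b\in\O(M)\mid b\lhd a\}$ for every $a\in\O(M)$.

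Here are the steps I would carry out. Fix $a\in\O(M)$. As $M$ is $T_1$, $\C(M)$ join-generates $M$, so $a=\bigvee\{c\in\C(M)\mid c\le a\}$. Now fix $c\in\C(M)$ with $c\le a$ and apply \cref{open and closed element inbetween} to the pair $c\le a$; this produces $b\in\O(M)$ and $d\in\C(M)$ with $c\le b\le d\le a$. I then claim $b\lhd a$: by \cref{def: well inside} this means $\diamond b\le\square a$, and indeed $\diamond b\le\diamond d=d\le a=\square a$, using that $d$ is closed, $a$ is open, and $\diamond$ is monotone. Since also $c\le b$, every closed element below $a$ lies under some open element well inside $a$, so taking joins gives $a\le\bigvee\{b\in\O(M)\mid b\lhd a\}$. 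The reverse inequality is immediate from \cref{prec}(1), which gives $b\lhd a\Rightarrow b\le a$. Hence $a=\bigvee\{b\in\O(M)\mid b\lhd a\}$, and $M$ is regular.

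The argument is short once \cref{open and closed element inbetween} is available, so there is no serious obstacle at this stage; the genuine work sits in that lemma (the MT-algebra analogue of the classical characterization of normal spaces), which would be proved by a standard interpolation argument starting from the form of normality in \cref{normal}. Within the present proof the only point requiring attention is bookkeeping of the $T_1$ hypothesis: it is used both to have $\C(M)$ join-generate $M$ and to be entitled to conclude regularity from the criterion in \cref{def: T3}. One could equally run the dual version, starting from $c=\bigwedge\{a\in\O(M)\mid c\le a\}$ for $c\in\C(M)$ and extracting, via \cref{open and closed element inbetween}, a closed element $d$ with $c\lhd d\le a$, thereby verifying the criterion of \cref{T3}; the two formulations are interchangeable.
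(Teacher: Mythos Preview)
Your proof is correct and follows essentially the same route as the paper: both use the $T_1$ hypothesis to write $a$ as a join of closed elements (the paper phrases this dually as $\neg a=\bigwedge\{u\in\O(M)\mid \neg a\le u\}$ and then negates), and then apply \cref{open and closed element inbetween} to interpolate an open $b$ and closed $d$ between each such closed element and $a$, concluding $b\lhd a$. Your direct formulation is slightly cleaner than the paper's complemented one, but the arguments are the same.
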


\begin{proof}
It is sufficient to show that $a = \bigvee \{b \in \O(M) \mid b \lhd a\}$ for each $a\in\O(M)$. Since $a\in\O(M)$, we have $\neg a\in\C(M)$. Because $M$ is a $T_1$-algebra, $\neg a = \bigwedge \{ u \in \O(M) \mid \neg a \le u \}$. Since $M$ is a normal algebra,  \cref {open and closed element inbetween} implies that for each such $u$ there exist $v\in\O(M)$ and $d\in\C(M)$ such that $\neg a\le v\le d\le u$. Therefore, $\neg u \le \neg d \le \neg v \le a$. Letting $\neg d=b$, we obtain that $b\in\O(M)$ and $b \ \lhd \ a$. Thus,
\[
a = \bigvee \{ \neg u \mid u \in \O(M), \neg u \le b \lhd a\} = \bigvee \{b \in \O(M) \mid b \lhd a\},
\] 
as required. 
\end{proof}

Let $\MT_{4}$ be the full subcategory of $\MT_3$ consisting of normal MT-algebras. \cref{nonspatial} shows that there are normal MT-algebras that are not spatial. Let $\SMT_{4} = \MT_{4} \cap \SMT$ and let $\Top_{4}$ be the full subcategory of $\Top_3$ consisting of normal spaces.

\begin{lemma} \label{lem: T4}
Let $ X \in \Top$ and $M\in\MT$. Then $X \in \Top_4$ iff $ (\P(X),\int) \in \MT_4$.
\end{lemma}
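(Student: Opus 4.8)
The plan is to prove both directions of the equivalence, following the pattern established in the previous lemmas of the form \cref{lem: T1}, \cref{lem:T_2}, etc. Since $X$ is a $T_1$-space iff $(\P(X),\int)\in\MT_1$ by \cref{lem: T1}(1), and since a normal MT-algebra (resp. normal space) is by definition a $T_1$-algebra (resp. $T_1$-space) with an extra separation property, it suffices to verify that the extra property transfers across the correspondence $X \leftrightarrow (\P(X),\int)$. The key observation is that under this correspondence, $\O(\P(X),\int)=\Omega(X)$ and $\C(\P(X),\int)$ is exactly the lattice of closed subsets of $X$.

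First I would assume $X\in\Top_4$, so $X$ is $T_1$ and hence $(\P(X),\int)\in\MT_1$ by \cref{lem: T1}(1). To check normality of $(\P(X),\int)$, take $c,d\in\C(\P(X),\int)$ with $c\wedge d=\varnothing$; these are disjoint closed subsets of $X$, so by normality of $X$ there are disjoint open sets $a,b$ with $c\subseteq a$ and $d\subseteq b$. Since $a,b\in\Omega(X)=\O(\P(X),\int)$, this is precisely the condition in \cref{def: T4}. Conversely, if $(\P(X),\int)\in\MT_4$, then it is a $T_1$-algebra, so $X$ is a $T_1$-space by \cref{lem: T1}(1); and given disjoint closed sets $c,d$ in $X$, viewing them as closed elements of $(\P(X),\int)$ with $c\wedge d=0$ yields disjoint open elements $a,b\in\O(\P(X),\int)=\Omega(X)$ separating them, which is exactly normality of $X$.

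Alternatively, and perhaps more cleanly, one can invoke \cref{thm: normality}: a $T_1$-algebra $M$ is normal iff $\O(M)$ is a normal frame. Applying this with $M=(\P(X),\int)$ gives that $(\P(X),\int)\in\MT_4$ iff $X$ is $T_1$ and $\O(\P(X),\int)=\Omega(X)$ is a normal frame. But it is recalled in the paragraph preceding \cref{def: T4} that for a $T_1$-space $X$, the frame $\Omega(X)$ is normal iff $X$ is a normal space. Combining these two facts immediately yields $X\in\Top_4$ iff $(\P(X),\int)\in\MT_4$.

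I do not anticipate any real obstacle here: the statement is routine once one unwinds the definitions, and every ingredient (the identification of open and closed elements of $(\P(X),\int)$ with opens and closeds of $X$, the $T_1$ transfer of \cref{lem: T1}(1), the classical fact relating normality of $\Omega(X)$ to normality of $X$, and \cref{thm: normality}) is already available. The only minor point to state carefully is that $\C(\P(X),\int)$ consists exactly of the closed subsets of $X$, which is immediate since the closure operator on $\P(X)$ is topological closure. I would write the proof in the two-direction style for transparency, or cite \cref{thm: normality} together with the recalled classical fact for brevity.
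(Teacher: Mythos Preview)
Your proposal is correct and matches the paper's own proof, which simply says that the result follows from \cref{lem: T1}(1) and the definition of normality in MT-algebras and topological spaces. Your first (direct) argument is exactly this; the alternative route via \cref{thm: normality} is also valid but is not what the paper does.
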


\begin{proof}
This follows from \cref{lem: T1}(1) and the definition of normality in MT-algebras and topological spaces.
 \end{proof}

Since normality is not a hereditary property, and hence a relativization of a normal MT-algebra may not be normal, unlike other separation axioms, it is unclear whether $M \in \MT_4$ implies that $\at(M) \in \Top_4$. 
Therefore, it is unclear whether the dual adjunction of \cref{thm: duality for T3} restricts to $\MT_4$ and $\Top_4$. Nevertheless, \cref{lem: T4} yields that the dual equivalence of \cref{thm: duality for T3} does restrict to $\SMT_4$ and $\Top_4$:

\begin{theorem} \label{thm: duality for T4}
    The dual equivalence between $\Top_3$ and $\SMT_3$ restricts to a dual equivalence between $\Top_{4}$ and $\SMT_{4}$.
\end{theorem}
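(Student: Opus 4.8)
The plan is to upgrade the dual equivalence $\P : \Top_3 \leftrightarrows \SMT_3 : \at$ of \cref{thm: duality for T3} by checking the two object-level containments $\P[\Top_4] \subseteq \SMT_4$ and $\at[\SMT_4] \subseteq \Top_4$. Once these hold, the restriction is automatic: $\Top_4$ and $\SMT_4$ are full subcategories of $\Top_3$ and $\SMT_3$ which are closed under homeomorphism and MT-isomorphism respectively, so the natural isomorphisms $\varepsilon : 1_\Top \to \at\circ\P$ and $\eta : 1_\MT \to \P\circ\at$ witnessing the dual equivalence restrict to their components at objects of these subcategories, and no morphisms are lost by fullness.

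First I would show that $X \in \Top_4$ implies $\P(X) \in \SMT_4$. Since $\Top_4$ is a full subcategory of $\Top_3$, \cref{thm: duality for T3} already gives $\P(X) \in \SMT_3$; in particular $\P(X)$ is spatial, being an atomic Boolean algebra for every set $X$. By \cref{lem: T4}, $X \in \Top_4$ iff $(\P(X),\int) \in \MT_4$, so $\P(X) \in \MT_4$. As $\SMT_4 = \MT_4 \cap \SMT$, we conclude $\P(X) \in \SMT_4$.

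Next I would show that $M \in \SMT_4$ implies $\at(M) \in \Top_4$. Again $M \in \SMT_3$ yields $\at(M) \in \Top_3$ by \cref{thm: duality for T3}. Because $M$ is spatial, $\eta_M : M \to \P(\at(M))$ is an MT-isomorphism by \cref{thm: SMT dual to Top}; since being a $T_1$-algebra and satisfying the normality condition of \cref{def: T4} are isomorphism-invariant, $(\P(\at(M)),\int) \cong M$ lies in $\MT_4$, and \cref{lem: T4} then gives $\at(M) \in \Top_4$. Combining this with the previous paragraph and the opening remark establishes the restricted dual equivalence. The only delicate point, and the reason the statement concerns the dual equivalence rather than the dual adjunction of \cref{thm: duality for T3}, is that normality is not hereditary: for a general (possibly non-spatial) $M \in \MT_4$ one cannot conclude $\at(M) \in \Top_4$, and the argument above genuinely exploits spatiality through the isomorphism $M \cong \P(\at(M))$. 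I do not expect any further obstacle, as everything else is inherited verbatim from \cref{thm: duality for T3}.
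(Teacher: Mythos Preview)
Your proposal is correct and follows essentially the same approach as the paper: the paper simply remarks (in the paragraph preceding the theorem) that \cref{lem: T4} yields the restriction of the dual equivalence of \cref{thm: duality for T3}, and your argument is a faithful elaboration of this, making explicit the use of the MT-isomorphism $\eta_M : M \to \P(\at(M))$ for spatial $M$ to transfer normality and then invoking \cref{lem: T4}. Your observation about why only the dual equivalence (and not the dual adjunction) restricts matches the paper's own discussion verbatim.
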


It is a consequence of the celebrated Urysohn lemma that every normal space is completely regular. To generalize this result to MT-algebras, we require a version of Urysohn's lemma for MT-algebras, which is our next goal. We start with the following lemma, which generalizes a similar result for spaces 
(see, e.g., \cite[Thms.~2.11, 3.9(vi)]{Naimpally}).

\begin{lemma} \label{lemma: interpolation}
    Let $M$ be a normal MT-algebra. If $a,b\in M$ with $a \lhd b$, then there is $u \in \O(M)$ such that $a \lhd u \lhd b$.
\end{lemma}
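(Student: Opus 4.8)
The plan is to reduce the statement directly to the normality characterization recorded in \cref{open and closed element inbetween}. Recall that $a\lhd b$ unpacks as $\diamond a\le\square b$, and that for an open $u$ we have $\square u=u$, so $a\lhd u$ means $\diamond a\le u$ and $u\lhd b$ means $\diamond u\le\square b$.

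First I would set $c=\diamond a$ and $v=\square b$. Then $c\in\C(M)$, $v\in\O(M)$, and the hypothesis $a\lhd b$ gives exactly $c\le v$. Since $M$ is a normal algebra it is in particular a $T_1$-algebra, so \cref{open and closed element inbetween} applies: from $c\le v$ we obtain $u\in\O(M)$ and $d\in\C(M)$ with $c\le u\le d\le v$.

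It remains to verify that this $u$ works. From $\diamond a=c\le u$ and $u$ open we get $\diamond a\le\square u$, i.e.\ $a\lhd u$. For the other relation, $u\le d$ with $d$ closed gives $\diamond u\le\diamond d=d$, and $d\le v=\square b$, so $\diamond u\le\square b$, i.e.\ $u\lhd b$. Hence $a\lhd u\lhd b$, as required.

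There is essentially no obstacle here beyond bookkeeping with the operators: the only point to be careful about is that \cref{open and closed element inbetween} is stated for $T_1$-algebras, which is guaranteed since normal algebras are $T_1$ by definition; everything else is immediate from $\square$ and $\diamond$ being, respectively, the right and left adjoints of the inclusions of $\O(M)$ and $\C(M)$ (\cref{square and diamond as adjoints}). If one prefers, the two verifications can also be read off from \cref{a prec b}, since $a\le u=u\le d\le b$ with $u\lhd u$ trivially false — so I would instead spell them out as above rather than invoke \cref{prec}.
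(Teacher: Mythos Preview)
Your proof is correct and follows exactly the paper's approach: apply \cref{open and closed element inbetween} to the closed element $\diamond a$ and the open element $\square b$, obtaining $\diamond a\le u\le d\le\square b$ with $u$ open and $d$ closed, which immediately gives $a\lhd u\lhd b$. The paper's write-up is more terse (it simply asserts ``Thus, $a\lhd u$ and $u\lhd b$''), while you spell out the verifications; your final aside about \cref{a prec b} is a bit muddled but harmless since you rightly abandon it.
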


\begin{proof}
Since $a \ \lhd \ b$, we have $\diamond a \leq \square b$. Because $M$ is a normal algebra, by \cref{open and closed element inbetween} there are $u\in\O(M)$ and $c\in\C(M)$ such that $\diamond a \leq u \le c \le \square b$ . Thus, $a \lhd u$ and $u \lhd b$. 
\end{proof}

We are ready to prove a version of Urysohn's lemma for MT-algebras. The proof is a direct adaptation of the proof of the first part of Urysohn's lemma 
(see, e.g., \cite[p.~41]{Engelking}). 

\begin{lemma} [Urysohn's lemma for MT-algebras] \label{creating up}
 Let $M$ be a normal MT-algebra, $a \in \O(M)$, $c \in \C(M)$, and $c \le a$. Then there is a family $\mathcal U = \{u_p \in \O(M) \mid p \in \mathbb Q \cap[0,1] \}$ such that

 \begin{enumerate}
     \item $c \leq u_0$ and $u_1 \le a$.
     \item $p < q$ implies $u_p \lhd u_q$ for all $ p,q \in \Q \cap [0,1]$.
 \end{enumerate}
\end{lemma}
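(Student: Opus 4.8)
The plan is to mimic the classical proof of Urysohn's lemma, using the interpolation property from \cref{lemma: interpolation} in place of the usual topological interpolation step. First I would enumerate the rationals in $\Q \cap [0,1]$ as a sequence $q_0, q_1, q_2, \dots$ with $q_0 = 0$ and $q_1 = 1$ (every other $q_n$ distinct), and build the family $\{u_{q_n}\}$ by recursion on $n$. To get started, note that $c \le a$ means $\diamond c \le \square a$ (since $c$ is closed and $a$ is open), i.e. $c \lhd a$; by \cref{lemma: interpolation} there is $u_0 \in \O(M)$ with $c \lhd u_0 \lhd a$, and we may set $u_1 := a$. This secures clause (1): $c \le u_0$ (by \cref{prec}(1)) and $u_1 \le a$ trivially, and also $u_0 \lhd u_1$.

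For the recursive step, suppose $u_{q_0}, \dots, u_{q_{n-1}}$ have been defined so that they are elements of $\O(M)$ and $q_i < q_j$ (for $i, j < n$) implies $u_{q_i} \lhd u_{q_j}$. To define $u_{q_n}$, let $p$ be the largest of $q_0, \dots, q_{n-1}$ that is less than $q_n$ and let $r$ be the smallest that is greater than $q_n$ (both exist since $q_0 = 0$ and $q_1 = 1$ are already in the list and $0 < q_n < 1$). By the inductive hypothesis $u_p \lhd u_r$, so \cref{lemma: interpolation} gives $u_{q_n} \in \O(M)$ with $u_p \lhd u_{q_n} \lhd u_r$. It then remains to check that the relation $u_{q_i} \lhd u_{q_j}$ holds for all pairs $i, j \le n$ with $q_i < q_j$; the only new pairs involve $q_n$, and these follow from $u_p \lhd u_{q_n}$, $u_{q_n} \lhd u_r$, the choice of $p, r$ as the immediate neighbours, and transitivity of $\lhd$ via \cref{a prec b} (if $q_i < q_n$ then $q_i \le p$, so $u_{q_i} \le u_p \lhd u_{q_n}$ by the inductive hypothesis applied to $q_i \le p$, hence $u_{q_i} \lhd u_{q_n}$; symmetrically on the other side). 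This completes the recursion, and the resulting family $\mathcal U = \{u_p \mid p \in \Q \cap [0,1]\}$ satisfies (1) and (2).

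I expect the only real subtlety to be the bookkeeping in the recursive step: one must be careful that $p$ and $r$ are genuinely the nearest already-defined rationals on either side of $q_n$, and that transitivity is invoked through the mixed inequality $x \le a \lhd b \le y \Rightarrow x \lhd y$ of \cref{a prec b} rather than a naive "$\lhd$ is transitive" (which is not directly listed). Everything else — the base case, the fact that each $u_{q_n}$ lands in $\O(M)$ because \cref{lemma: interpolation} produces an open element, and the verification of clause (1) — is routine. No compactness or choice beyond a fixed enumeration of a countable set is needed, so the argument is a clean transcription of the standard proof.
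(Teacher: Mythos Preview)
Your proposal is correct and follows essentially the same route as the paper: enumerate $\Q\cap[0,1]$ starting with $0$ and $1$, set $u_1=a$, interpolate once (via \cref{lemma: interpolation}) to obtain $u_0$, and at each later stage interpolate between the immediate neighbours already constructed. Your write-up is in fact slightly more careful than the paper's, since you explicitly verify the new $\lhd$-relations via \cref{a prec b}, whereas the paper leaves that check implicit.
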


\begin{proof}
   Since $\Q \cap (0, 1)$ is countable, let $\{p_n \mid  n \in \N\}$ be an enumeration of $\Q \cap (0,1)$ such that  
   $p_0 = 0$ and $p_1 = 1$. We prove by induction that for each $n \ge 1$ there is $\mathcal U_n = \{u_{p_0}, \dots, u_{p_n}\}$ such that \begin{enumerate}[(i)]
     \item $c \leq u_0$ and $u_1 = a$;
     \item For all $i,j \in \N$ with $i,j \leq n$ we have $p_i < p_j \implies u_i \lhd u_j$.
 \end{enumerate}
 
Let $n =1$. Observe that $c = \diamond c \leq \square a = a$, so $c \lhd a$. By \cref{lemma: interpolation}, there is $u_0 \in \O(M)$ with $c \lhd u_0 \lhd a$.     Letting $u_1 = a$, we have that $\mathcal U_1 = \{u_0, u_1\}$ satisfies (i) and (ii).

Now suppose (i) and (ii) hold for $\mathcal U_n = \{u_{p_0}, \dots, u_{p_n}\}$. To simplify notation, we let $P = \{p_0, p_1, \dots, p_n\}$ and $r = p_{n+1}$.
We also let $p$ be an immediate predecessor and $q$ and immediate successor of $r$ in $P$.
By the inductive hypothesis, $u_p \lhd u_q$ . By \cref{lemma: interpolation}, there is $u_{p_{n+1}} \in \O(M)$ with $u_p \lhd u_{p_{n+1}} \lhd u_q$.
Thus, $\mathcal U_{n+1} := \mathcal U_n \cup \{u_{p_{n+1}}\}$ satisfies (i) and (ii).

Finally, letting $\mathcal U = \bigcup_{n \in \N} \mathcal U_n$ yields the desired $\mathcal U$.
\end{proof}

\begin{lemma}\label{coincide}
Let $M\in\MT_4$ and $a,b\in M$. Then $a \llcurly b$ iff $a \ \lhd \ b$. 
\end{lemma}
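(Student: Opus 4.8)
The plan is to prove the two implications separately; the forward one is immediate and holds without normality, while the backward one is a direct application of the Urysohn lemma for MT-algebras (\cref{creating up}).

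First, for $a \llcurly b \implies a \lhd b$: by \cref{rem: comp reg}, the relation $\llcurly$ is a subrelation of $\lhd$, so there is nothing to do. Now suppose $a \lhd b$, i.e. $\diamond a \le \square b$. Since $\diamond a \in \C(M)$, $\square b \in \O(M)$, and $\diamond a \le \square b$, we may apply \cref{creating up} with the closed element $\diamond a$ in place of $c$ and the open element $\square b$ in place of $a$. This produces a family $\mathcal U = \{u_p \in \O(M) \mid p \in \Q \cap [0,1]\}$ satisfying $\diamond a \le u_0$, $u_1 \le \square b$, and $p < q \implies u_p \lhd u_q$.

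Finally, setting $c_p := u_p$ gives a family indexed by $\Q \cap [0,1]$ with $a \le \diamond a \le u_0 = c_0$, $c_1 = u_1 \le \square b \le b$, and $p < q \implies c_p \lhd c_q$. By definition, this witnesses $a \llcurly b$, completing the converse and hence the equivalence. I expect no real obstacle here, since the substantive work is already packaged into \cref{creating up}; the only point requiring a moment of care is lining up the endpoint conditions, using $a \le \diamond a$ and $\square b \le b$ so that the Urysohn family serves directly as a $\llcurly$-witness for the original pair $(a,b)$.
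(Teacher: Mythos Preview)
Your proof is correct and follows essentially the same approach as the paper's own proof: both directions are handled identically, with the forward implication from $\llcurly \subseteq \lhd$ and the converse by applying \cref{creating up} to $\diamond a \le \square b$ to produce the witnessing family. Your version makes the endpoint checks $a \le \diamond a$ and $\square b \le b$ explicit, which the paper leaves implicit, but otherwise the arguments are the same.
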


\begin{proof} 
Since ${\llcurly} $ is a subrelation of $\lhd$, we have that $a \llcurly b$ implies $a \ \lhd \ b$. Conversely, suppose that $a \ \lhd \ b$. Then $\diamond a \leq \square b$. By \cref{creating up}, there is a family of $u_p \in \O(M)$, with $p\in\Q\cap[0,1]$, such that $\diamond a \leq u_0$, $u_1 \leq \square b$, and $p < q$ implies $u_p \lhd u_q$. Therefore, $a \llcurly b$. 
\end{proof}

\begin{theorem}
    If $M$ is a normal MT-algebra, then $M$ is completely regular.
\end{theorem}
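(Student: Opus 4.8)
The plan is to deduce this immediately from the two facts already established: that every normal MT-algebra is regular (\cref{MT4 implies MT3}), and that on a normal MT-algebra the relations $\lhd$ and $\llcurly$ coincide (\cref{coincide}). All the genuine work — the interpolation lemma \cref{lemma: interpolation} and the Urysohn-style construction of a dyadic scale in \cref{creating up} — has already been carried out in order to prove \cref{coincide}, so this final theorem is essentially a bookkeeping step.

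Concretely, I would argue as follows. Let $M$ be a normal MT-algebra. By definition of a $T_4$-algebra, $M$ is a $T_1$-algebra, so the first clause of \cref{def: T3 and half} is satisfied. By \cref{MT4 implies MT3}, $M$ is a regular MT-algebra, hence by \cref{def: T3} we have $a = \bigvee\{b \in \O(M) \mid b \lhd a\}$ for every $a \in \O(M)$. Now \cref{coincide} gives, for all $b,a \in M$ (in particular for $a,b \in \O(M)$), the equivalence $b \llcurly a \iff b \lhd a$, so $\{b \in \O(M) \mid b \lhd a\} = \{b \in \O(M) \mid b \llcurly a\}$. Therefore $a = \bigvee\{b \in \O(M) \mid b \llcurly a\}$ for every $a \in \O(M)$, which is exactly the second clause of \cref{def: T3 and half}. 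Hence $M$ is a completely regular MT-algebra.

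There is no real obstacle here; the only point requiring a moment's care is to make sure one is invoking \cref{coincide} rather than \cref{regular implies regular} (which only matches $\lhd$ with the frame relation $\prec$), and to note that the equality of the two generating sets transfers verbatim from the regularity condition to the complete-regularity condition. One could optionally close by remarking that this generalizes the classical consequence of Urysohn's lemma that every normal $T_1$-space is Tychonoff, via \cref{lem: T1} and \cref{lem: T4}.
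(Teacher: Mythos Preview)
Your proposal is correct and follows exactly the paper's approach: invoke \cref{MT4 implies MT3} to get regularity, then use \cref{coincide} to replace $\lhd$ by $\llcurly$ in the approximating join. The paper's proof is just a two-line compression of what you wrote.
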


\begin{proof}
Let $M$ be a normal MT-algebra. Then $\lhd$ coincides with ${\llcurly}$ by \cref{coincide}. Therefore, the result follows from \cref{MT4 implies MT3}.
\end{proof}

Let $\bf NormFrm$ be the full subcategory of $\Frm_{\bf sfit}$ consisting of normal frames. Then $\bf NormFrm$ is a full subcategory of $\bf CRegFrm$ (see, e.g., \cite[p.~92]{PicadoPultr2012}).

\begin{proposition}\label{NFrm}
The restriction $\O:\MT_4 \to \bf NormFrm$ is well defined, essentially surjective, and faithful.
\end{proposition}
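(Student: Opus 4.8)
The plan is to follow the by-now familiar template used for Propositions \ref{essentially surjective MT1}, \ref{essentially surjective MT3}, and \ref{CRegFrm}, adapting each of the three assertions in turn. \emph{Well-definedness:} by \cref{thm: normality}, if $M \in \MT_4$ then $\O(M)$ is a normal frame; since every normal MT-algebra is a $T_1$-algebra, \cref{subfit} gives that $\O(M)$ is subfit, so $\O(M)$ genuinely lands in $\bf NormFrm$ (recall $\bf NormFrm$ was defined as a full subcategory of $\Frm_{\bf sfit}$). \emph{Faithfulness:} this is immediate from \cref{thm: O faithful for T0}, since every $T_4$-algebra is in particular a $T_0$-algebra.

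The substantive part is \emph{essential surjectivity}. Given $L \in \bf NormFrm$, I would invoke the proof of \cref{thm: essentially surj} to get $\O\left(\overline{B(L)}\right)=L$, so it remains to check that $\overline{B(L)} \in \MT_4$. First, $\overline{B(L)}$ is a $T_{1/2}$-algebra by \cref{T_1/2}. Next, $\O\left(\overline{B(L)}\right)=L$ is a normal frame, hence (as noted in the excerpt, via \cite[p.~92]{PicadoPultr2012}) completely regular, hence in particular regular; so by the argument in the proof of \cref{essentially surjective MT3} (which shows a $T_{1/2}$-algebra whose frame of opens is regular is already a $T_1$-algebra), $\overline{B(L)}$ is a $T_1$-algebra. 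Finally, $\overline{B(L)}$ is a $T_1$-algebra whose frame of opens $L$ is normal, so $\overline{B(L)}$ is a normal MT-algebra by \cref{thm: normality}. This completes essential surjectivity, and hence the proof.

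The only mild subtlety — and the step I would flag as the place to be careful — is the appeal to \cref{essentially surjective MT3}'s internal argument to upgrade $T_{1/2}$ to $T_1$: that argument uses regularity of $\O\left(\overline{B(L)}\right)$ to rewrite each locally closed generator as a join of closed elements, and one must make sure the hypotheses match (here they do, since a normal frame is completely regular and a fortiori regular). Everything else is a routine chaining of the cited results, with no new calculation required.
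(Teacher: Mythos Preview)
Your proof is correct and follows essentially the same template as the paper. The one difference worth noting is in how you upgrade $\overline{B(L)}$ from a $T_{1/2}$-algebra to a $T_1$-algebra: you route through regularity (normal $\Rightarrow$ completely regular $\Rightarrow$ regular, then invoke the internal argument of \cref{essentially surjective MT3}), whereas the paper simply observes that $L \in \bf NormFrm$ is subfit \emph{by definition} and applies \cref{Sfit} directly. Both work, but the paper's route is shorter and avoids the detour through \cite[p.~92]{PicadoPultr2012}; your route, on the other hand, makes explicit why the argument parallels the regular case.
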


\begin{proof}
Let $M\in \MT_4$. By \cref{thm: normality}, $\O(M)$ is a normal frame. Since $M$ is a $T_1$-algebra, $\O(M)$ is also subfit by \cref{subfit}. Therefore, the restriction $\O:\MT_4 \to\bf NormFrm$ is well defined. It is faithful by \cref{thm: O faithful for T0} since $T_4$-algebras are $T_0$-algebras. To see that $\O$ 
is essentially surjective, let $L \in \bf NormFrm$. By the proof of \cref{thm: essentially surj}, $\O\left(\overline{B(L)}\right)=L$.  
By \cref{T_1/2}, $\overline{B(L)}$ is a $T_{1/2}$-algebra, so $\overline{B(L)}$ is a $T_1$-algebra by \cref{Sfit}. Thus, $\overline{B(L)}$ is a normal algebra by \cref{thm: normality}, completing the proof.
\end{proof}

We don't know whether $\O:\MT_4\to\bf NormFrm$ is full. However, if $\bf SNormFrm$ denotes the full subcategory of $\bf NormFrm$ consisting of spatial frames, we have:

\begin{theorem}
The restriction $\O:\SMT_4\to\bf SNormFrm$ is an equivalence.
\end{theorem}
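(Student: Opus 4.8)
The plan is to obtain this exactly as \cref{thm: regular} was obtained from \cref{thm: equivalence for spatial T2}: deduce it from the equivalence $\O\colon\SMT_{3\frac{1}{2}}\to\bf SCRegFrm$ of \cref{thm: completely reg equiv}. The key structural observation is that $\SMT_4$ is a full subcategory of $\SMT_{3\frac{1}{2}}$, since every normal MT-algebra is completely regular (shown above), and that $\bf SNormFrm$ is a full subcategory of $\bf SCRegFrm$, since $\bf NormFrm\subseteq\bf CRegFrm$ and being spatial is preserved. So it suffices to check that, under the ambient equivalence $\O\colon\SMT_{3\frac{1}{2}}\to\bf SCRegFrm$, these two full subcategories correspond to one another on objects; fullness and faithfulness of $\O\colon\SMT_4\to\bf SNormFrm$ are then automatic, being inherited from the ambient equivalence.

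Concretely, I would proceed as follows. First, well-definedness: for $M\in\SMT_4$, \cref{NFrm} gives $\O(M)\in\bf NormFrm$ and \cref{SMT implies SFrm} gives that $\O(M)$ is spatial, so $\O(M)\in\bf SNormFrm$. Faithfulness is immediate from \cref{thm: O faithful for T0}, since every $T_4$-algebra is a $T_0$-algebra. Fullness: given $M,N\in\SMT_4$ and a frame homomorphism $h\colon\O(M)\to\O(N)$, regard $M,N$ as objects of $\SMT_{3\frac{1}{2}}$ and apply fullness of $\O$ there (part of \cref{thm: completely reg equiv}) to get an MT-morphism $g\colon M\to N$ with $\O(g)=h$; as $\SMT_4$ is full in $\SMT_{3\frac{1}{2}}$, $g$ is already a morphism of $\SMT_4$. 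Finally, essential surjectivity onto $\bf SNormFrm$: let $L\in\bf SNormFrm$. Then $L$ is spatial, subfit, and normal, hence a spatial completely regular frame, so $L\in\bf SCRegFrm$; by \cref{thm: completely reg equiv} there is $M\in\SMT_{3\frac{1}{2}}$ with $\O(M)\cong L$. Since $M$ is completely regular it is in particular a $T_1$-algebra (this is built into \cref{def: T3 and half}), and $\O(M)\cong L$ is normal, so \cref{thm: normality} forces $M$ to be a normal MT-algebra; being spatial, $M\in\SMT_4$ and $\O(M)\cong L$. Thus $\O\colon\SMT_4\to\bf SNormFrm$ is essentially surjective, full, and faithful, hence an equivalence.

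I do not expect a genuine obstacle here; the result is corollary-level. The one point that requires care is the invocation of \cref{thm: normality} inside the essential-surjectivity step: that biconditional ("$M$ normal iff $\O(M)$ normal") is stated only for $T_1$-algebras, so one must be sure that the $M$ produced by \cref{thm: completely reg equiv} is actually a $T_1$-algebra — which it is, since it lies in $\SMT_{3\frac{1}{2}}$ and completely regular algebras are $T_1$ by definition. Everything else is bookkeeping about full subcategories, so no new ideas are needed beyond citing \cref{thm: completely reg equiv}, \cref{NFrm}, and \cref{thm: normality}.
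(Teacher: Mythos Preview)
Your proposal is correct and follows essentially the same approach as the paper, whose proof consists of the single line ``Apply \cref{thm: completely reg equiv,NFrm}.'' You have simply unpacked what that citation means: use the ambient equivalence $\O\colon\SMT_{3\frac{1}{2}}\to\mathbf{SCRegFrm}$ for fullness and faithfulness, use \cref{NFrm} (plus spatiality) for well-definedness, and use \cref{thm: normality} together with the $T_1$ property to show that the object produced by essential surjectivity of the ambient equivalence already lies in $\SMT_4$.
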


\begin{proof}
Apply \cref{{thm: completely reg equiv},NFrm}.  
\end{proof}
In the diagram below we summarize the adjunctions, equivalences, and dual equivalences obtained in this paper. The notation $\hookrightarrow$ stands for being a reflective subcategory, with 
\begin{tikzpicture}
\draw[->] (0,.05) to [bend left] (.5,.05);
\end{tikzpicture}
being the corresponding left adjoint. We also use 
$\longleftrightarrow$ for dual equivalence  
and $\leq$ for being a full subcategory.\\

\adjustbox{scale=.9125,center}{
\begin{tikzcd}
    && \ar[ld] \MT \ar[r, hookleftarrow]
        & \SMT  \ar[r, <->] \ar[l, bend right, <-]
        & \Top\\
     \SFrm \ar[r, hookrightarrow]  
        & \Frm \ar[l, bend right]
        & \ar[l]\MT_0  \ar[r, hookleftarrow]\ar[u, symbol=\leq]
        & \SMT_0  \ar[r, <->] \ar[u, symbol=\leq] \ar[l, bend right, <-]
        & \Top_0 \ar[u, symbol=\leq]\\
    && \ar[lu]\MT_{1/2} \ar[r, hookleftarrow]\ar[u, symbol=\leq]
        & \SMT_{1/2}  \ar[r, <->] \ar[u, symbol=\leq] \ar[l, bend right, <-]
        & \Top_{1/2} \ar[u, symbol=\leq]\\
   \SFrm_{\bf sfit} \ar[r, hookrightarrow] \ar[uu, symbol=\leq]
       & \ar[uu, symbol=\leq] \Frm_{\bf sfit}  \ar[l, bend right]
       & \MT_1\ar[l] \ar[r, hookleftarrow]\ar[u, symbol=\leq]
       & \SMT_1  \ar[r, <->] \ar[u, symbol=\leq] \ar[l, bend right, <-]
       & \Top_1 \ar[u, symbol=\leq]\\
   \SHFrm \ar[u, symbol=\leq] \ar[r, hookrightarrow]
        & \ar[u, symbol=\leq] \HFrm_{\bf sfit} \ar[l, bend right]
        & \ar[l]\MT_2 \ar[r, hookleftarrow]\ar[u, symbol=\leq]
        & \SMT_2  \ar[r, <->] \ar[u, symbol=\leq] \ar[l, bend right, <-]
        & \Top_2 \ar[u, symbol=\leq]\\
    \SRegFrm \ar[u, symbol=\leq] \ar[r, hookrightarrow]
        & \ar[u, symbol=\leq]\RegFrm \ar[l, bend right]
        & \ar[l]\MT_3 \ar[r, hookleftarrow]\ar[u, symbol=\leq]
        & \SMT_3  \ar[r, <->] \ar[u, symbol=\leq] \ar[l, bend right, <-]
        & \Top_3 \ar[u, symbol=\leq]\\
    \SCRegFrm \ar[u, symbol=\leq] \ar[r, hookrightarrow]
        & \ar[u, symbol=\leq]\CRegFrm  \ar[l, bend right]
        & \MT_{3\frac{1}{2}}\ar[l] \ar[r, hookleftarrow]\ar[u, symbol=\leq]
        & \SMT_{3\frac{1}{2}}  \ar[r, <->] \ar[u, symbol=\leq] \ar[l, bend right, <-]
        & \Top_{3\frac{1}{2}} \ar[u, symbol=\leq]\\
    \SNFrm \ar[u, symbol=\leq] \ar[r, hookrightarrow]
        &  \ar[u, symbol=\leq] \NFrm \ar[l, bend right]
        & \ar[l]\MT_4 \ar[r, hookleftarrow]\ar[u, symbol=\leq]
        & \SMT_4  \ar[r, <->] \ar[u, symbol=\leq] \ar[l, bend right, <-]
        & \Top_4 \ar[u, symbol=\leq]\\
\end{tikzcd}
}

We didn't include $\MT_{\bf Sob}$ in the diagram above since it is not comparable with $\MT_{1/2}$ and $\MT_1$, but we have the following version of the diagram that includes $\MT_{\bf Sob}$.\\ 
\begin{center}
\begin{tikzcd}
    && \ar[ld] \MT_0 \ar[r, hookleftarrow]
		& \SMT_0  \ar[r, <->] \ar[l, <-, bend right]
		& \Top_0 \\
	\SFrm \ar[r, hookrightarrow]
		& \Frm \ar[l, bend right]
		& \ar[l] \MT_{\bf Sob} \ar[r, hookleftarrow] \ar[u, symbol=\leq]
		& \SMT_{\bf Sob} \ar[r, <->] \ar[u, symbol=\leq] \ar[l, <-, bend right]
		& \Sob \ar[u, symbol=\leq]\\
	\SHFrm \ar[u, symbol=\leq]\ar[r, hookrightarrow]
		& \ar[u, symbol=\leq] \HFrm_{\bf sfit}  \ar[l, bend right]
		& \ar[l] \MT_{2} \ar[r, hookleftarrow] \ar[u, symbol=\leq] 
		& \SMT_{2} \ar[r, <->] \ar[u, symbol=\leq] \ar[l, <-, bend right]
		& \Top_{2} \ar[u, symbol=\leq]\\
\end{tikzcd}
\end{center}

The table below summarizes various separation axioms for MT-algebras studied in the paper. 

\begin{center}
\setlength{\tabcolsep}{12pt}
\begin{tabular}{ll}
    \toprule
    \bf MT-Algebras & \bf Condition \\ \midrule
    $T_0$ & $\WLC(M)$ join-generates $M$ (Def.~\ref{def:T0}). \\ 
    $T_{1/2}$ & $\LC(M)$ join-generates $M$ (Def.~\ref{def: T half}). \\ 
    $T_1$ & $\C(M)$ join-generates $M$ (Def.~\ref{def: T1}). \\
    $T_2$ & $\GC(M)$ join-generates $M$ (Def.~\ref{def: T2}).\\ 
    $T_3$ & $T_1 + a=\bigvee\{ b\in \O(M) \mid u \lhd a\}$ for all $a \in \O(M)$ (Def.~\ref{def: T3}). \\
    $T_{3\frac{1}{2}}$ & $T_1 + a=\bigvee\{ b \in \O(M) \mid u\llcurly a\}$ for all $a \in \O(M)$ (Def.~\ref{def: T3 and half}). \\
    $T_4$ & $T_1 + \forall  c ,d \in \C(M) , c \wedge d=0 \Longrightarrow$  \\
    & $\phantom{T_1 + {}}  \exists a, b\in \O(M) : a \wedge b =0$ and $c \leq a, d \leq b$ (Def.~\ref{def: T4}). \\
    \bottomrule
\end{tabular}
\end{center}

\section{Conclusions}

In this paper we introduced MT-algebras as an alternative pointfree approach to topology. One advantage of MT-algebras is that the entire $\Top$ is (contravariantly) embedded into $\MT$, while only $\Sob$ is (contravariantly) embedded into $\Frm$. We showed that the functor $\O:\MT \to \Frm$ is essentially surjective, and that it becomes full or faithful on several subcategories of $\MT$.

We also initiated a detailed study of separation axioms for MT-algebras that generalize the classic separation axioms in topology. They all 
are characterized by identifying appropriate subsets of an MT-algebra $M$ that join-generate $M$. We showed that the restriction $\O:\MT_0 \to \Top_0$ is faithful and studied when the corresponding spaces of $M$ and $\O(M)$ are homeomorphic, yielding that  the restriction $\O:\SMT_{\bf Sob} \to \SFrm$ is an equivalence. 

We showed that an MT-algebra $M$ is a $T_{1/2}$-algebra iff $M$ is isomorphic to $\overline{B(\O(M))}$ and proved that a sober $T_{1/2}$-algebra $M$ is spatial iff $\O(M)$ is a spatial frame. In addition, we proved that an MT-algebra $M$ is $T_1$ iff $M$ is $T_{1/2}$ and $\O(M)$ is a subfit frame. Nevertheless, the restriction $\O:\MT_1\to\Frm_{\bf sfit}$ is not full.

Restricting to Hausdorff MT-algebras, we have that 
$\O:\MT_2 \to \HFrm_{\bf sfit}$ is essentially surjective and faithful, and further restriction to the corresponding spatial categories yields an equivalence because Hausdorff MT-algebras are sober. This remains true for regular and completely regular MT-algebras, but care is needed when restricting to normal MT-algebras because the relativization of a normal MT-algebra may no longer be normal. However, the restriction $\O:\SMT_4 \to \SNFrm$ remains an equivalence. In addition, we do have an analogue of Urysohn's lemma for MT-algebras, which allows us to prove that $\MT_4$ is a full subcategory of $\MT_{3\frac{1}{2}}$. 

In conclusion, for ease of reference, we gather together several open problems that are scattered throughout the paper:

\begin{enumerate}
\item By \cref{thm: when is M spatial}, a sober $T_{1/2}$-algebra $M$ is spatial iff $\O(M)$ is a spatial frame. It is open whether the $T_{1/2}$ assumption is necessary. In other words, if $M$ is a sober MT-algebra, is it true that $M$ is spatial iff $\O(M)$ is spatial (see \cref{rem: spatiality of M vs O(M)})?

\item It follows from \cite[p.~43]{PicadoPultr2021} that if $M$ is a spatial $T_0$-algebra, then $M$ is Hausdorff iff $\O(M)$ is a Hausdorff frame. It is open whether the spatiality assumption can be dropped from the theorem. Related to this, it remains open whether the restriction $\O:\MT_2 \to \HFrm_{\bf sfit}$ is essential surjective or full.
\item By \cref{rem: O not full on MT1}, the restriction $\O:\MT_1 \to \Frm_{\bf sfit}$ is not full. However, it remains open whether the restrictions $\O:\MT_3 \to \RegFrm$, $\O:\MT_{3\frac{1}{2}} \to \CRegFrm$, or $\O:\MT_4 \to \NFrm$ are full.
\item It remains open whether $M\in\MT_{4}$ implies that $\at(M) \in \Top_{4}$. As a result, it remains open whether the dual adjunction between $\Top_3$ and $\MT_3$ restricts to a dual adjunction between $\Top_4$ and $\MT_4$.
\end{enumerate}

\bibliographystyle{abbrv}
\bibliography{references}

\begin{thebibliography}{10}

\bibitem{DL}
R.~Balbes and P.~Dwinger.
\newblock {\em Distributive lattices}.
\newblock University of Missouri Press, Columbia, Mo., 1974.

\bibitem{BezhGabelaiaJibladze2013}
G.~Bezhanishvili, D.~Gabelaia, and M.~Jibladze.
\newblock Funayama's theorem revisited.
\newblock {\em Algebra Universalis}, 70(3):271--286, 2013.

\bibitem{RaneyAlgebra}
G.~Bezhanishvili and J.~Harding.
\newblock Raney algebras and duality for {$T_0$}-spaces.
\newblock {\em Appl. Categ. Structures}, 28(6):963--973, 2020.

\bibitem{BlokThesis}
W.~J. Blok.
\newblock {\em Varieties of interior algebras}.
\newblock PhD thesis, University of Amsterdam, 1976.

\bibitem{RedModalLogic}
A.~Chagrov and M.~Zakharyaschev.
\newblock {\em Modal logic}, volume~35 of {\em Oxford Logic Guides}.
\newblock The Clarendon Press, Oxford University Press, New York, 1997.

\bibitem{Engelking}
R.~Engelking.
\newblock {\em General topology}, volume~6 of {\em Sigma Series in Pure
  Mathematics}.
\newblock Heldermann Verlag, Berlin, second edition, 1989.

\bibitem{Esakia}
L.~Esakia.
\newblock {\em Heyting algebras}, volume~50 of {\em Trends in Logic---Studia
  Logica Library}.
\newblock Springer, Cham, 2019.
\newblock Translated from the 1985 Russian original by A. Evseev. Edited by G.
  Bezhanishvili and W. H. Holliday.

\bibitem{HalmosBA}
S.~Givant and P.~Halmos.
\newblock {\em Introduction to {B}oolean algebras}.
\newblock Undergraduate Texts in Mathematics. Springer, New York, 2009.

\bibitem{Gratzer}
G.~Gr\"{a}tzer.
\newblock {\em General lattice theory.}
\newblock Birkh\"{a}user Verlag, Basel-Stuttgart, 1978.

\bibitem{Halmos1956}
P.~R. Halmos.
\newblock Algebraic logic. {I}. {M}onadic {B}oolean algebras.
\newblock {\em Compositio Math.}, 12:217--249, 1956.

\bibitem{MacNeille}
J.~Harding and G.~Bezhanishvili.
\newblock Mac{N}eille completions of modal algebras.
\newblock {\em Houston J. Math.}, 33(2):355--384, 2007.

\bibitem{Johnstone}
P.~T. Johnstone.
\newblock {\em Stone spaces}, volume~3 of {\em Cambridge Studies in Advanced
  Mathematics}.
\newblock Cambridge University Press, Cambridge, 1986.
\newblock Reprint of the 1982 edition.

\bibitem{MacLane}
S.~Mac~Lane.
\newblock {\em Categories for the working mathematician}, volume~5 of {\em
  Graduate Texts in Mathematics}.
\newblock Springer-Verlag, New York, second edition, 1998.

\bibitem{McKinseyTarski}
J.~C.~C. McKinsey and A.~Tarski.
\newblock The algebra of topology.
\newblock {\em Ann. of Math.}, 45:141--191, 1944.

\bibitem{McKinseyTarski1946}
J.~C.~C. McKinsey and A.~Tarski.
\newblock On closed elements in closure algebras.
\newblock {\em Ann. of Math.}, 47:122--162, 1946.

\bibitem{McKinseyTarski1948}
J.~C.~C. McKinsey and A.~Tarski.
\newblock Some theorems about the sentential calculi of {L}ewis and {H}eyting.
\newblock {\em J. Symbolic Logic}, 13:1--15, 1948.

\bibitem{Naimpally}
S.~A. Naimpally and B.~D. Warrack.
\newblock {\em Proximity spaces}, volume~59 of {\em Cambridge Tracts in
  Mathematics}.
\newblock Cambridge University Press, Cambridge, 2008.
\newblock Reprint of the 1970 original.

\bibitem{naturman1990interior}
C.~A. Naturman.
\newblock {\em Interior algebras and topology}.
\newblock PhD thesis, University of Cape Town, 1990.

\bibitem{PicadoPultr2012}
J.~Picado and A.~Pultr.
\newblock {\em Frames and locales}.
\newblock Frontiers in Mathematics. Birkh\"{a}user/Springer Basel AG, Basel,
  2012.

\bibitem{PicadoPultr2021}
J.~Picado and A.~Pultr.
\newblock {\em Separation in point-free topology}.
\newblock Birkh\"{a}user/Springer, Cham, [2021] \copyright 2021.

\bibitem{Rasiowasikorski}
H.~Rasiowa and R.~Sikorski.
\newblock {\em The mathematics of metamathematics}.
\newblock Monografie Matematyczne, Tom 41. PWN---Polish Scientific Publishers,
  Warsaw, third edition, 1970.

\end{thebibliography}

\end{document}